\def\tr{{\raise0pt\hbox{$\scriptscriptstyle\top$}}}
\newtheorem{theorem}{Theorem}[section]
\newtheorem{proposition}[theorem]{Proposition}
\newtheorem{lemma}[theorem]{Lemma}
\title{\vspace{-1.5 cm}{\bf Diophantine equations with three monomials}}
\author{Bogdan Grechuk\footnote{School of Computing and Mathematical Sciences, University of Leicester, LE1 7RH, UK; bg83@leicester.ac.uk}
\and
Tetiana Grechuk
\and
Ashleigh Wilcox
}
\begin{document}
	
\maketitle

\begin{abstract}
We present a general algorithm for solving all two-variable polynomial Diophantine equations consisting of three monomials. 
Before this work, even the existence of an algorithm for solving the one-parameter family of equations $x^4+axy+y^3=0$ has been an open question. 
We also present an elementary method that reduces the task of finding all integer solutions to a general three-monomial equation to the task of finding primitive solutions to equations with three monomials in disjoint variables. We identify a large class of three-monomial equations for which this method leads to a complete solution. Empirical data suggests that this class contains $100\%$ of three-monomial equations as the number of variables goes to infinity.   
\end{abstract}



\textbf{Key words}: Diophantine equations, two-variable equations, three-monomial  equations,  generalized Fermat equations, integer solutions.

\textbf{2020 Mathematics Subject Classification}. 11D41

\section{Introduction}
Diophantine equations are polynomial equations with integer coefficients, usually in two or more variables, for which we are interested in finding integer solutions. This topic is almost as old as  mathematics. The name ``Diophantine'' refers to Diophantus of Alexandria, who studied such equations in the 3rd century. In 1920, Dickson \cite{dickson1920history} published an overview of works in this area up to that date, with essentially no proofs, just an annotated bibliography of results, and it still took the whole volume with 800 pages. Any similar overview today would require dozens of volumes, because Diophantine equations attract mathematicians of all levels, with hundreds of papers and many books published every year. For an introduction to the subject, we recommend the 1969 book of Mordell \cite{mordell1969diophantine}, and more recent excellent books of Cohen \cite{cohen2008number} and Andreescu \cite{andreescu2010introduction}.

In 1900, Hilbert \cite{hilbert1902mathematical} presented a list of $23$ mathematical problems for the next centuries. Hilbert's 10th problem asks for a general method for determining whether a given Diophantine equation has any integer solution. This task looks much easier than describing all integer solutions. Nevertheless, Matiyasevich \cite{matijasevic1970enumerable}, building on an earlier work of Davis, Putnam and Robinson \cite{davis1961decision}, proved in 1970 that the answer to Hilbert’s 10th problem is negative, and no such general method exists.

While solving all Diophantine equations is impossible, researchers focus on developing methods for solving restricted families of equations. For example, one may restrict the degree of the equation or the number of variables. A notable result in this direction is a deep theorem of Grunewald and Segal \cite{grunewald1981solve, grunewald2004integer}, that proves the existence of a finite algorithm which, given a quadratic equation in any number of variables, determines whether its integer solution set is finite or infinite, and, in the former case, lists all the solutions. Also, a combination of the main theorems in \cite{baker1970integer}, \cite{poulakis1993points}, and \cite{poulakis2002solving} imply the existence of an algorithm for solving all cubic equations in $2$ variables. However, there are no known methods for solving cubic equations in $3$ variables, or quartic equations in $2$ variables, see \cite{gasarch2021hilbertb}.

Another important parameter of a Diophantine equation which we may restrict is the number of its monomials.  Equations having at most two monomials are easy to solve \cite{bell1933reciprocal,ward1933type}, while equations with three monomials may be very difficult in general. A canonical example of a difficult three-monomial equation is of course Fermat's Last Theorem (FLT), resolving equations of the form $x^n+y^n=z^n$ for $n\geq 3$. It can be viewed as either an exponential Diophantine equation, or an infinite family of polynomial equations. While FLT is now proved by Wiles \cite{wiles1995modular}, its natural generalization  
\begin{equation}\label{eq:genFermat}
ax^n + by^m = cz^k, 
\end{equation}
which is called the generalized Fermat equation, remains the subject of current deep research.
In most works, the authors are interested to find the primitive solutions to \eqref{eq:genFermat}, that is, ones satisfying $\text{gcd}(x,y,z)=1$. In 1998, Beukers \cite{beukers1998diophantine} proved that if $\frac{1}{n}+\frac{1}{m}+\frac{1}{k}>1$, then the set of all primitive solutions to \eqref{eq:genFermat} can be covered by a finite number of polynomial parametrizations, and there is an algorithm that, given $a,b,c,n,m,k$ with $\frac{1}{n}+\frac{1}{m}+\frac{1}{k}>1$, produces such parametrizations. In the case $\frac{1}{n}+\frac{1}{m}+\frac{1}{k}=1$, the problem reduces to finding rational points on certain elliptic curves. There is no known algorithm for solving this problem in general, but there are methods that can find rational points on many specific curves, so individual equations \eqref{eq:genFermat} with $\frac{1}{n}+\frac{1}{m}+\frac{1}{k}=1$ and not-too-large $a,b,c$ can usually be solved by standard methods. If $\frac{1}{n}+\frac{1}{m}+\frac{1}{k}<1$, then Darmon and Granville \cite{darmon1995equations} proved that, for any specific $a,b,c,n,m,k$, equation \eqref{eq:genFermat} has only a finite number of primitive solutions, which raises the problem of listing these solutions explicitly. The most studied special case is $a=b=c=1$, so that equation \eqref{eq:genFermat} reduces to $x^n+y^m=z^k$. Many recent papers solve this equation for various triples $(n,m,k)$, see e.g. \cite{bennett2016generalized} for a survey. A recent example is the resolution of case $(2,3,11)$ in \cite{freitas2020generalized}, conditional on the generalized Riemann hypothesis. 

In this work, we are interested in describing \emph{all} integer solutions to a given three-monomial equation, not only the primitive ones. Our first main result is the general algorithm for solving all three-monomial equations in two variables. Our proof uses deep classical results in the theory of two-variable Diophantine equations, such as Baker's resolution of superelliptic equations \cite{baker1969bounds} and Walsh's effective version of Runge's theorem \cite{walsh1992quantitative}, but is otherwise elementary. As a very special case, our algorithm resolves a family of equations
$$
x^4+axy+y^3=0,
$$
which Masser \cite{MR4396165} used as an example of a simple-looking family that ``does not seem to be effectively solvable''. 

We also present a method that reduces an arbitrary three-monomial equation, in any number of variables, to a finite number of equations that have three monomials in disjoint variables.
In many examples, the resulting equations are either easier than the original one, or are already solved in the literature. As an example, we present a family of equations that our method reduces to FLT. As another example, we give a description of \emph{all} integer solutions to the equation \eqref{eq:genFermat}, provided that at least one of the exponents $n,m,k$ is coprime with the other two. In addition, we identify a simple and easy-to-check sufficient condition that guarantees that a given three-monomial equation is solvable by our method, and empirically check that the percentage of three-monomial equations satisfying this condition approaches $100\%$ when the number of variables goes to infinity.

This work is organised into four sections. Section \ref{sec:2var3mon} overviews some deep results in the theory of two-variable Diophantine equations, and uses  them to prove the existence of a general algorithm for solving all two-variable equations consisting of three monomials. Section \ref{sec:gen3mon} studies three-monomial equations in any number of variables. Section \ref{sec:concl} concludes the work. 

\section{Two-variable equations with three monomials}\label{sec:2var3mon} 

\subsection{Some solvable families of two variable equations}\label{sec:2var}

A monomial in variables $x_1,\dots,x_n$ with integer coefficient is any expression of the form $ax_1^{k_1}\dots x_n^{k_n}$, where $a$ is a non-zero integer and $k_1, \dots, k_n$ are non-negative integers. A polynomial $P(x_1,\dots,x_n)$ is the sum of any (finite) number of such monomials. A polynomial Diophantine equation is an equation of the form 
\begin{equation}\label{eq:diofgen}
P(x_1, \dots, x_n) = 0.
\end{equation} 
A natural way to classify Diophantine equations is by the number of variables. Equations in one variable 
\begin{equation}\label{eq:onevar}
\sum_{i=m}^d a_i x^i = 0, \quad \text{where} \,\, a_m, a_{m+1}, \dots, a_d \,\, \text{are integers such that} \,\, a_m\neq 0 \,\, \text{and} \,\, a_d \neq 0,
\end{equation} 
are easy to solve in integers, because every non-zero solution $x$ must be a divisor of $a_m$. In fact, \eqref{eq:onevar} is also easy to solve in rationals, because, if $x=\frac{p}{q}$ is a non-zero irreducible fraction, then, by multiplying  \eqref{eq:onevar} by $q^d$ and dividing by $p^m$, we obtain
$$
	\sum_{i=m}^d a_i p^{i-m} q^{d-i} = 0.
$$
The first term of this sum is $a_m q^{d-m}$. Because all other terms are divisible by $p$, and $p,q$ are coprime, this implies that $p$ must be a divisor of $a_m$. By similar argument, the last term $a_d p^{d-m}$ is divisible by $q$, hence $q$ is a divisor of $a_d$. Because $a_m$ and $a_d$ have a finite number of divisors, there is only a finite number of candidate fractions $\frac{p}{q}$ to check. In fact, there is a much faster algorithm \cite{lenstra2012finding} for listing all rational solutions to \eqref{eq:onevar} that works in time polynomial in the length of the input.

A general algorithm for finding integer or rational solutions for all equations in two variables is currently unknown, and this is a major open problem.

We may also classify equations by the number of monomials. There are algorithms for describing all integer solutions to an arbitrary two-monomial equation in any number of variables, see \cite{bell1933reciprocal,ward1933type}, hence the first open case are three-monomial equations.
Such equations can be very difficult in general, as witnessed by FLT and its generalizations. 

In this section, we restrict the number of variables and the number of monomials \emph{simultaneously}, and present a general algorithm for solving all two-variable three-monomial Diophantine equations. Before proving the main result, we first overview some well-known solvable families of two-variable Diophantine equations. 
 
In 1969, Baker \cite{baker1969bounds} proved the following theorem.
\begin{theorem}\label{th:ellBaker1}
There is an algorithm for listing all integer solutions (there are finitely many of them) of the equation 
\begin{equation}\label{eq:hyperell}
y^m = P(x) = a_n x^n + \dots + a_1 x + a_0,
\end{equation}
provided that all $a_i$ are integers, $a_n\neq 0$, and either (i) $m=2$ and $P(x)$ possesses at least three simple (possibly complex) zeros, or (ii) $m\geq 3$, $n\geq 3$, and $P(x)$ has at least two simple zeros.
\end{theorem}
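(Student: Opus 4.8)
The plan is to establish finiteness and effectivity together by reducing the superelliptic and hyperelliptic equations to families of Thue equations (when $m \ge 3$) and two-variable unit equations (when $m = 2$), both of which are effectively solvable via Baker's theory of linear forms in logarithms. First I would pass to a number field $K$ containing all roots $\alpha_1, \dots, \alpha_n$ of $P$, write $P(x) = a_n \prod_{i=1}^n (x - \alpha_i)$, and fix a finite set $S$ of places of $K$ consisting of the archimedean ones together with those dividing $a_n$ and the discriminant of $P$. After multiplying the equation by a suitable power of $a_n$ to obtain an integral model, the idea is to analyse, for a hypothetical solution $(x,y)$, the ideal factorisation of each $x - \alpha_i$ in the ring of $S$-integers $O_{K,S}$.

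Second, I would prove the key structural fact that each $x - \alpha_i$ is an $m$-th power up to a bounded factor. For distinct roots $\alpha_i, \alpha_j$, any common prime ideal divisor of $(x-\alpha_i)$ and $(x-\alpha_j)$ divides $(\alpha_i - \alpha_j)$ and hence lies in $S$; away from $S$ the ideals $(x-\alpha_i)$ are therefore pairwise coprime, while their product equals an $m$-th power (up to $S$-factors coming from $a_n$). Each $(x-\alpha_i)$ is thus an $m$-th power of an ideal outside $S$, and by the finiteness of the $S$-class group together with Dirichlet's $S$-unit theorem I would deduce a representation $x - \alpha_i = \delta_i \eta_i^m$, where $\delta_i$ ranges over an effectively computable finite subset of $K^*$ and $\eta_i \in K^*$.

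Third comes the elimination of $x$, where the hypotheses on $m$, $n$, and the number of simple zeros enter decisively. For $m \ge 3$ I would take two simple roots $\alpha_1, \alpha_2$ and subtract the corresponding relations to obtain $\delta_1 \eta_1^m - \delta_2 \eta_2^m = \alpha_2 - \alpha_1$; since the binary form $\delta_1 X^m - \delta_2 Y^m$ has $m \ge 3$ distinct linear factors, this is a non-degenerate Thue equation over $K$. For $m = 2$ a single subtraction yields only a Pell-type conic, which may have infinitely many solutions, so I would instead use \emph{three} simple roots: factoring the three quadratic relations and combining them produces a two-variable $S$-unit equation $u + v = 1$ in a suitable quadratic extension, whose non-degeneracy is guaranteed precisely because the three roots are simple and distinct. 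These conditions are exactly what ensure, after the reduction, a form of degree at least three with at least three distinct roots (equivalently, positive genus of the curve), ruling out the genuinely degenerate cases such as linear or pure-power right-hand sides that do possess infinitely many integer points; this accounts for the precise shape of (i) and (ii).

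Finally, I would invoke the effective finiteness theorems for Thue equations and for $S$-unit equations over number fields. This is the heart of the matter and the main obstacle: it rests on Baker's lower bounds for linear forms in logarithms of algebraic numbers, which yield explicit upper bounds on the heights of all admissible $\eta_i$, and hence on $|x|$, in terms of $a_0, \dots, a_n$ and $m$. Once an effective bound $|x| \le B$ is in hand, a finite search produces the complete solution set, giving the algorithm. Since Siegel's classical work already provides qualitative finiteness, the entire novelty and difficulty lie in the effectivity supplied by these logarithmic-forms estimates.
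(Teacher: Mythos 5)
The first thing to say is that the paper contains no proof of this statement: Theorem \ref{th:ellBaker1} is Baker's 1969 theorem on hyperelliptic and superelliptic equations, quoted with a citation to \cite{baker1969bounds} and used throughout the paper as a black box (the paper's actual contributions, Proposition \ref{prop:x4paxypy3} and Theorem \ref{th:3mon2var}, reduce three-monomial equations to instances of it and of Theorem \ref{th:Runge}). So there is no proof in the paper to compare yours against; what you have written is a reconstruction of the standard proof of Baker's theorem, and as an outline it is essentially the right one: pass to a number field containing the roots, factor the ideals $(x-\alpha_i)$ over a suitable ring of $S$-integers, use finiteness of the $S$-class group and Dirichlet's $S$-unit theorem to write $x-\alpha_i=\delta_i\eta_i^m$ with $\delta_i$ from an effectively computable finite set, eliminate $x$ to reach a Thue equation when $m\ge 3$ or, using three simple roots and the Siegel identity, an $S$-unit equation when $m=2$, and finally invoke Baker's lower bounds for linear forms in logarithms for effectivity. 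This is the argument found in the literature (Baker's original paper and its later streamlinings, such as the one underlying \cite{MR1631771}).

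One step of your sketch is wrong as written, though repairable, and fixing it is precisely where the ``simple zeros'' hypothesis does its work. You define $S$ using the discriminant of $P$ and then argue that away from $S$ the ideals $(x-\alpha_i)$ are pairwise coprime and their product is an $m$-th power, so each is an $m$-th power ideal. But the theorem allows $P$ to have repeated roots, in which case the discriminant of $P$ is zero and, more importantly, the product equal to $y^m/a_n$ is $\prod_i(x-\alpha_i)^{r_i}$ with multiplicities $r_i$, so your conclusion fails for a root with $r_i>1$: a prime $\mathfrak{p}\notin S$ dividing $x-\alpha_i$ only yields $r_i\, v_{\mathfrak{p}}(x-\alpha_i)\equiv 0 \pmod m$. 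The correct statement, and all the proof needs, is that $v_{\mathfrak{p}}(x-\alpha_i)\equiv 0\pmod m$ for the \emph{simple} roots $\alpha_i$ (take $S$ to contain the primes dividing $a_n$ and the differences $\alpha_i-\alpha_j$ of \emph{distinct} roots, i.e. the discriminant of the radical of $P$). Since the hypotheses provide at least two simple roots when $m\ge3$ and at least three when $m=2$, the elimination step you describe then goes through verbatim, and with that correction your proposal is a faithful sketch of the standard proof.
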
 


As a special case, Theorem \ref{th:ellBaker1} implies the existence of an algorithm that, given integers $a,b,c$ and non-negative integers $n,m$, outputs a description of the solution set to the three-monomial equation
\begin{equation}\label{eq:cympaxnpb}
a y^m = b x^n + c.
\end{equation}
Indeed, by swapping $x$ and $y$ if necessary, we may assume that $m\leq n$. If $ab=0$ or $m=0$, then \eqref{eq:cympaxnpb} is an equation in one variable. If $c=0$, then \eqref{eq:cympaxnpb} is a two-monomial equation and can be solved by the algorithms in  \cite{bell1933reciprocal,ward1933type}. Otherwise we may consider $|a|$ cases $x=|a|z, x=|a|z+1, \dots, x=|a|z+(|a|-1)$, where $z$ is a new integer variable, and in each case, equation \eqref{eq:cympaxnpb} is either not solvable because the right-hand side is not divisible by $a$, or, after cancelling $a$, reduces to an equation of the form
\begin{equation}\label{eq:cympaxnpbred}
y^m = P(z),
\end{equation}
where $P(z)=(b(|a|z+r)^n+c)/a$ for some $0\leq r < |a|$ such that $br^n+c$ is divisible by $a$.
Then if $m=1$ we can take $z=u$ an arbitrary integer, and express $y$ as $y=P(u)$. If $m=n=2$, then the solution methods for equation \eqref{eq:cympaxnpb} are well-known, see e.g. \cite[Theorem 4.5.1]{andreescu2015quadratic}. Finally, if $m\geq 2$, $n\geq 3$, and $c\neq 0$, then polynomial $P(z)$ in \eqref{eq:cympaxnpbred} has $n\geq 3$ simple zeros, and equation \eqref{eq:cympaxnpbred} is covered by Theorem \ref{th:ellBaker1}. 

The original proof of Theorem \ref{th:ellBaker1}  works by establishing a (very large) upper bound on the size of possible solutions, and the algorithm checks all pairs $(x,y)$ up to this bound and is therefore completely impractical. In 1998, Bilu and Hanrot \cite{MR1631771} developed a more involved but much faster version of this algorithm. So, given any equation of the form \eqref{eq:cympaxnpb}, one may follow the lines of \cite{MR1631771} and solve the equation. Of course, this may require a lot of time, effort, and knowledge of non-elementary mathematics, but at least it is possible in finite time. In the special case $m=n$, \eqref{eq:cympaxnpb} belongs to a family of Thue equations for which there are easier and faster algorithms, see e.g. \cite{MR987566}.  

%
%

Another important example of a solvable family of two-variable equations are the ones satisfying Runge's condition. This family of equations was first studied by Runge \cite{runge1887ueber} in 1887. We say that the equation
$$
P(x,y) = \sum_{i=0}^n \sum_{j=0}^m a_{ij} x^i y^j = 0,
$$
with integer coefficients $a_{ij}$ of degree $n>0$ in $x$ and $m>0$ in $y$, satisfies the \textbf{Runge's condition} if $P(x,y)$ is irreducible over ${\mathbb Q}$ and either
\begin{itemize}
	\item[(C1)] there exists a coefficient $a_{ij}\neq 0$ of $P$ such that $nj+mi>mn$,
	
	or 
	
	\item[(C2)] the sum of all monomials $a_{ij}x^iy^j$ of $P$ for which $nj + mi = nm$ can be decomposed into a product of two non-constant coprime polynomials.
\end{itemize}

In 1887, Runge \cite{runge1887ueber} proved that if a polynomial $P$ satisfies this condition, then equation $P=0$ has at most finitely many integer solutions and outlined (somewhat informally) a method for finding these solutions. In 1992, Walsh \cite{walsh1992quantitative} developed an effective upper bound for the size of possible solutions, which definitely implies the existence of a formal algorithm for listing all the solutions. See \cite{beukers2005implementation} for a practical version of such algorithm.

\begin{theorem}\label{th:Runge} \cite{runge1887ueber, walsh1992quantitative}
There is an algorithm that, given any polynomial $P$ satisfying the  Runge's condition, determines all integer solutions of equation $P=0$.
\end{theorem}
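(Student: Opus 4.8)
The plan is to reduce the theorem to an explicit finite search. It suffices to produce, from the coefficients $a_{ij}$ and the degrees $n,m$ of $P$, an effectively computable bound $B$ with the guarantee that every integer solution $(x,y)$ of $P=0$ satisfies $\max(|x|,|y|)\le B$; the algorithm then evaluates $P$ at the finitely many integer pairs in this box and returns those that vanish. Thus the entire content is the existence of such a $B$, and it rests on two pillars: Runge's qualitative argument, showing that the solution set is finite, and Walsh's quantitative refinement, turning that argument into an explicit bound.

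For the finiteness I would analyse the algebraic function $y=y(x)$ cut out by $P=0$ near $x=\infty$. Viewing $P$ as a polynomial of degree $m$ in $y$ with leading coefficient $c_m(x)\in\mathbb{Q}[x]$, its roots admit Puiseux expansions $y_1(x),\dots,y_m(x)$, each of the form $y_i(x)=\sum_{k\le k_i}c_{i,k}x^{k/e}$ and convergent for $|x|$ large, where $e$ is a common ramification index and the $c_{i,k}$ are algebraic. The role of Runge's condition is to furnish a \emph{nontrivial, proper} partition of the branches, $\{1,\dots,m\}=G_1\sqcup G_2$, that is stable under the local Galois action at infinity. Under (C1), a monomial with $nj+mi>nm$ produces an extra edge of the Newton polygon at infinity, hence branches of differing growth orders, and one groups them by growth order; under (C2), the factorisation of the leading form $\sum_{nj+mi=nm}a_{ij}x^iy^j$ into coprime non-constant factors over $\mathbb{Q}$ induces a factorisation of the associated one-variable indicial polynomial, and one groups the branches by which factor their leading coefficient solves. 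Exactly in the degenerate case excluded by Runge's condition all branches share the same leading asymptotics and form a single group, which is precisely when Pell-type infinite families can occur.

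Given the splitting, I would truncate each branch expansion to a sufficiently large, effectively bounded order and clear a bounded denominator and a bounded power of $x$, obtaining polynomials $\tilde Q_1,\tilde Q_2\in\mathbb{Z}[x,Y]$ with $0<\deg_Y\tilde Q_i<m$, built from the truncated cofactors $\prod_{i\in G_1}(Y-y_i(x))$ and $\prod_{i\in G_2}(Y-y_i(x))$; Galois-stability of the grouping is what makes these have rational (hence, after clearing, integer) coefficients. Because the genuine cofactors multiply to $c_m^{-1}P$, the truncation can be taken deep enough that the remainder $c_m\tilde Q_1\tilde Q_2-(\text{power of }x)\,P$ has all $x$-degrees so small that, evaluated at an integer solution $(x,y)$ — where $P$ vanishes and $|y|$ is polynomially bounded in $|x|$ — it is an integer tending to $0$, hence vanishing once $|x|$ passes an effective threshold. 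This forces $\tilde Q_1(x,y)=0$ or $\tilde Q_2(x,y)=0$. Since $P$ is irreducible over $\mathbb{Q}$ and each $\tilde Q_i$ has strictly smaller $Y$-degree, $P$ and $\tilde Q_i$ are coprime in $Y$, so $R(x):=\mathrm{Res}_Y(P,\tilde Q_i)$ is a nonzero polynomial in $x$ that vanishes at our $x$; thus $x$ ranges over finitely many values, each determining finitely many $y$.

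The main obstacle is making every estimate above \emph{effective}, which is the substance of Walsh's contribution. Runge's argument only asserts that the threshold and the resultant $R$ exist; to obtain an algorithm one must bound, in terms of $n$, $m$ and the heights of the $a_{ij}$, the ramification index $e$, the number of Puiseux coefficients that must be computed, the heights and denominators of the algebraic numbers $c_{i,k}$, and the truncation depth and decay rate needed to force the remainder below $1$ in absolute value. These together yield an explicit $|x|$-threshold beyond which $\tilde Q_1\tilde Q_2$ vanishes at solutions, while the finitely many remaining $x$ are controlled by an effective bound on the roots of $R$. Assembling these produces a fully explicit $B$, and the enumeration of the first paragraph then lists all integer solutions, proving the theorem.
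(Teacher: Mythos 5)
Your outer frame (produce an effective bound $B$, then search the box) is exactly what the paper intends: the paper gives no proof of this theorem but cites Walsh's quantitative bound, which reduces the statement to enumeration. The gap is in the substance you supply for that bound, specifically in your treatment of condition (C1). You claim that a monomial with $nj+mi>nm$ ``produces an extra edge of the Newton polygon at infinity, hence branches of differing growth orders,'' so that grouping by growth order yields a proper Galois-stable partition. This is false. Take $P(x,y)=xy^2-x^2-1$: it is irreducible, $n=m=2$, and the monomial $xy^2$ has $nj+mi=2\cdot 2+2\cdot 1=6>4=nm$, so (C1) holds and the theorem covers this equation. But solving for $y$ gives $y=\pm x^{1/2}\left(1+x^{-2}\right)^{1/2}$: both branches over $x=\infty$ have the same growth order $1/2$, so your grouping is the trivial one-block partition. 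Worse, the two branches are interchanged by $x^{1/2}\mapsto -x^{1/2}$ (there is a single, ramified place over $x=\infty$), so \emph{no} proper Galois-stable partition of these branches exists at all; and since the growth is positive you cannot fall back on ``$|y|<1$ forces $y=0$.'' Your construction of $\tilde Q_1,\tilde Q_2$ therefore cannot even begin, and the same failure occurs for, e.g., $x^2y^2-x-1$ (there the branches decay, which your write-up also does not handle).

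The classical Runge--Walsh argument avoids this by being symmetric in $x$ and $y$ (equivalently, by working with all places at infinity of the curve, not only those lying over $x=\infty$). For $xy^2-x^2-1$ the expansions of $x$ in terms of $y$ have growth orders $2$ and $-2$, and the argument runs after swapping variables. What your proof is missing is precisely the lemma that guarantees this rescue always works: under (C1), the $y$-branches over $x=\infty$ and the $x$-branches over $y=\infty$ cannot both consist of a single positive growth order. (Sketch: if each family were single-growth with positive growth, each corresponding right boundary of the Newton polygon would be a single edge ending at a vertex of maximal height, forcing both edges to be the anti-diagonal from $(n,0)$ to $(0,m)$, which contradicts the existence of a monomial above that line; the remaining single-growth cases have negative growth and are trivial because the relevant coordinate of a large solution must vanish.) Without this lemma, or some substitute such as the place-orbit formulation of Runge's condition, your proof fails on concrete equations that the theorem covers. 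The effectivity discussion attributed to Walsh is fine in outline, but it inherits the same gap because it is built on the same (nonexistent) partition.
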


\subsection{An algorithm for solving two-variable three-monomial equations}\label{sec:3mon2var}

In this section we present a general method for solving all three-monomial equations in two variables. Before this work, it was a belief that some of the equations from this family are difficult. For example, Masser \cite{MR4396165} presented a one-parameter family 
\begin{equation}\label{eq:x4paxypy3}
x^4+axy+y^3=0
\end{equation}
as an example of a family that ``does not seem to be effectively solvable''. We start this section by presenting a solution to this particular family of equations.

\begin{proposition}\label{prop:x4paxypy3}
There exists an algorithm that, given any integer $a$, outputs all integer solutions to equation \eqref{eq:x4paxypy3}. 
\end{proposition}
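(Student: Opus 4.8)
The plan is to convert the genus-two curve \eqref{eq:x4paxypy3}, for which a direct approach only yields ineffective (Faltings-type) finiteness, into finitely many equations of the shape \eqref{eq:cympaxnpb}, each solvable by the algorithm derived from Theorem \ref{th:ellBaker1}. First I would dispose of the degenerate cases: if $x=0$ then $y^3=0$ forces $y=0$, and symmetrically, so $(0,0)$ is the only solution with $xy=0$; and if $a=0$ the equation $x^4+y^3=0$ has only two monomials and is handled by the algorithms of \cite{bell1933reciprocal,ward1933type}. Thus I may assume $a\neq 0$ and $xy\neq 0$.

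The key algebraic step is a completion of the square in $x^3$. Multiplying \eqref{eq:x4paxypy3} by $4x^2$ and rearranging gives the identity $(2x^3+ay)^2 = y^2(a^2-4x^2y)$. Hence $a^2-4x^2y$, being an integer equal to the square of the rational number $(2x^3+ay)/y$, is itself a perfect square, so $z:=(2x^3+ay)/y$ is an integer satisfying both $z^2=a^2-4x^2y$ and $y(z-a)=2x^3$. Eliminating $y$ from these two relations (substitute $y=(a^2-z^2)/(4x^2)$ into $y(z-a)=2x^3$) yields the single equation $(z-a)^2(z+a)=-8x^5$. Writing $s=z-a$, this becomes $s^2(s+2a)=-8x^5$, and one checks that $s\neq 0$ and $s+2a\neq 0$ whenever $xy\neq 0$.

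It remains to solve $s^2(s+2a)=-8x^5$ in integers, and here I would run an elementary descent. For every prime $p$ not dividing $2a$ the factors $s$ and $s+2a$ are coprime, so from $2v_p(s)+v_p(s+2a)=5v_p(x)$ one deduces $v_p(s)\equiv v_p(s+2a)\equiv 0 \pmod 5$. Consequently $s=g_1A^5$ and $s+2a=g_2B^5$, where $g_1,g_2$ range over a finite, explicitly computable set of integers supported on $2$ and the primes dividing $a$ (with exponents reduced modulo $5$, the sign absorbed into $g_1$). Subtracting, each admissible pair $(g_1,g_2)$ gives the equation $g_2B^5-g_1A^5=2a$, which is of the form \eqref{eq:cympaxnpb} with $m=n=5$ and nonzero constant term, hence solvable. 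From the finitely many solutions $(A,B)$ of these finitely many equations I recover $s$, then $x$ as a fifth root of $-s^2(s+2a)/8$ and $y=2x^3/s$, and finally verify each candidate in \eqref{eq:x4paxypy3}.

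I expect the descent in the last paragraph to be the main technical obstacle: one must verify that only finitely many factor pairs $(g_1,g_2)$ occur, bound the exponents of the bad primes $2$ and $p\mid a$ correctly, and confirm that the correspondence $(x,y)\mapsto(A,B)$ is finite-to-one in both directions so that no solution is lost or spuriously introduced. The genuinely creative ingredient, by contrast, is the completion-of-the-square identity together with the observation that $(2x^3+ay)/y$ must be an integer, since this is what collapses the degree-$(4,3)$ curve to an equation of the form \eqref{eq:cympaxnpb} to which Baker's method applies.
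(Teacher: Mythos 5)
Your proof is correct, but it takes a genuinely different route from the paper's. The paper proves this proposition by a gcd substitution descent on $(x,y)$: writing $x=dx_1$, $y=dy_1$ with $|d|=\gcd(x,y)$, it deduces in turn that $d=kx_1$, that $k^2=y_1z$, and, after splitting $y_1=uv^2$, $k=uvw$, that $uw$ divides $a$; each divisor pair $(u_i,w_i)$ then contributes one equation $u_iw_i^2x_1^5+w_iu_i^3v^5=-a$ of the form \eqref{eq:cympaxnpb}. Your pivot is instead the identity $(2x^3+ay)^2=y^2(a^2-4x^2y)$ (the equation, multiplied by $4x^2$, is quadratic in $x^3$), which forces $z=(2x^3+ay)/y$ to be an integer, being a rational whose square is an integer; eliminating $y$ gives $s^2(s+2a)=-8x^5$ with $s=z-a=2x^3/y$, and your valuation descent at primes $p\nmid 2a$ --- where $\gcd(s,s+2a)\mid 2a$ and, writing $v_p$ for the exponent of $p$ in a prime factorization, the relation $2v_p(s)+v_p(s+2a)=5v_p(x)$ forces both $v_p(s)$ and $v_p(s+2a)$ to be multiples of $5$ --- yields $s=g_1A^5$, $s+2a=g_2B^5$ with $(g_1,g_2)$ ranging over a finite computable set, hence the Thue equations $g_2B^5-g_1A^5=2a$, again of the form \eqref{eq:cympaxnpb} with $m=n=5$ and $c=2a\neq 0$. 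I checked the load-bearing steps (the identity, the integrality of $z$, the elimination, $s\neq 0\neq s+2a$, and the congruences modulo $5$), and the bookkeeping is sound: completeness holds because every solution with $xy\neq 0$ maps to an admissible $(g_1,g_2,A,B)$, and the final verification of candidates removes anything spurious. Comparing the two approaches: your valuation descent is actually closer in spirit to the paper's \emph{general} method (Lemma \ref{le:main} and the proof of Theorem \ref{th:3mon2var}) than to its own proof of this proposition, while your completing-the-square step is an extra idea specific to this family, exploiting that the exponent of $x$ is even; the paper's gcd argument foreshadows its general machinery and produces one quintic equation per pair $(u,w)$ with $uw\mid a$, whereas yours is shorter and more classical in flavour (reduce to a superelliptic-shaped equation, then factor coprime pieces) at the price of a somewhat larger, though still finite and explicit, list of Thue equations, one per pair $(g_1,g_2)$ supported on the primes dividing $2a$ with exponents at most $4$.
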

\begin{proof}
If $a=0$, \eqref{eq:x4paxypy3} is an easy two-monomial equation, hence we can assume that $a \neq 0$. If $xy=0$, then \eqref{eq:x4paxypy3} implies that $x=y=0$. Let $(x,y)$ be any integer solution to \eqref{eq:x4paxypy3} with $xy\neq 0$, and let $d$ be the integer of the same sign as $y$ such that $|d|=\text{gcd}(x,y)$. Then $x=dx_1$ and $y=dy_1$ for coprime integers $x_1,y_1$ such that $x_1\neq 0$ and $y_1>0$. Substituting this into \eqref{eq:x4paxypy3} and cancelling $d^2$, we obtain the equation
$$
d^2 x_1^4 + ax_1y_1 + d y_1^3 = 0,
$$
from which it is clear that $dy_1^3$ is divisible by $x_1$. Because $\text{gcd}(x_1,y_1)=1$, this implies that $d=kx_1$ for some non-zero integer $k$. Then
\begin{equation}\label{eq:x4paxypy3aux}
0=(kx_1)^2 x_1^4 + a x_1 y_1 + (kx_1)y_1^3 = x_1(k^2x_1^5+ay_1+ky_1^3).
\end{equation}
The last equation implies that $k^2x_1^5$ is divisible by $y_1$. Because $x_1$ and $y_1$ are coprime, this implies that $k^2 = y_1 z$ for some integer $z>0$. Let $u=\text{gcd}(y_1,z)>0$. Then integers $y_1/u$ and $z/u$ are positive, coprime, and their product $(k/u)^2$ is a perfect square, hence $y_1/u=v^2$ and $z/u=w^2$ for some non-zero integers $v,w$, where we may assume that $w>0$ and $v$ has the same sign as $k$. Then $y_1=uv^2$, $k=uvw$, and \eqref{eq:x4paxypy3aux} implies that
$$ 
0 = (uvw)^2x_1^5+a(uv^2)+(uvw)(uv^2)^3 = uv^2(uw^2x_1^5 + a + wu^3v^5).
$$ 
Thus $uw^2x_1^5 + a + wu^3v^5=0$, which implies that $uw$ is a divisor of $a$. Because $a$ has a finite number of divisors, there are only finitely many pairs of positive integers $(u,w)$ satisfying this condition. 
For each such pair $(u_i,w_i)$, we can find $x_1$ and $v$ from the equation
$$
u_i w_i ^2 x_1^5 + w_i u_i ^3 v^5= - a.
$$
These are the equations of the form \eqref{eq:cympaxnpb}, and they can be solved by Theorem \ref{th:ellBaker1}. With $x_1$ and $v$ at hand, we can compute $k=u_ivw_i$, $d=kx_1$, $y_1=u_i v^2$, $x=dx_1$ and $y=dy_1$.
\end{proof}

As an illustration, we have applied the described algorithm to solve equation \eqref{eq:x4paxypy3} for $1 \leq a \leq 100$. All solutions except the trivial $(x,y)=(0,0)$ are listed in Table \ref{tab:ntsx4paxypy3}.

\begin{table}
	\footnotesize
	\begin{center}
		\begin{tabular}{ | c | c |}
			\hline
			\textbf{$a$ } & \textbf{Non-trivial solutions to equation \eqref{eq:x4paxypy3}}\\\hline \hline
			$2$& $(-1,1),(2,-2)$ \\
			$6$& $(-6,-12),(-2,-4),(-2,2),(3,-3)$\\
			$8$&$(-4,-8)$\\
			$10$&  $(-2,4),(10,-20)$\\
			$12$&$(-3,3),(4,-4)$\\
			$20$&$(-4,4),(5,-5)$\\
			$24$&  $(-24,-72),(-4,8),(-3,-9),(12,-24)$\\
			$30$& $(-5,5),(-3,9),(6,-6),(30,-90)$\\
			$31$&  $(-62,-248),(-2,-8),(4,-2)$\\
			$33$&$(-4,2),(-2,8),(66,-264)$\\
			$35$&$(20,-50)$\\
			$42$&$(-21,-63),(-6,-18),(-6,6),(-6,12),(7,-7),(14,-28)$\\
			$54$&$(-18,-54),(-9,-27)$\\
			$56$&$(-7,7),(8,-8)$\\
			$60$& $(-60,-240),(-15,-45),(-12,-36),(-4,-16)$\\
			$64$&$(-8,16),(16,-32)$\\
			$66$&$(-6,18),(33,-99)$\\
			$68$& $(-4,16),(68,-272)$\\
			$69$&$(12,-18)$\\
			$72$&$(-8,8),(9,-9)$\\
			$87$&$(-58,-232),(-6,-24)$\\
			$88$&$(396,-2904)$\\
			$90$&$(-10,20),(-9,9),(10,-10),(18,-36)$\\\hline
		\end{tabular}
		\caption{Non-trivial solutions to $x^4+axy+y^3=0$ for $1 \leq a \leq 100$. \label{tab:ntsx4paxypy3}}
	\end{center} 
\end{table}

We next move to the analysis of the general equation in $2$ variables with $3$ monomials, that is, any equation of the form
\begin{equation}\label{eq:3mon2var}
a x^n y^q + b x^k y^l + c x^r y^m = 0,
\end{equation}
where $a,b,c$ are non-zero integers and $n,q,k,l,r,m$ are non-negative integers. The cases $x=0$ and $y=0$ can be checked separately, so we may focus on finding integer solutions such that $x\neq 0$ and $y\neq 0$. Then by cancelling the common term $x^{\min\{n,k,r\}}y^{\min\{q,l,m\}}$ if necessary, we may assume that $nkr = qlm = 0$. Without loss of generality, we may assume that $r=0$. If $ql \neq 0$, then $m=0$, and \eqref{eq:3mon2var} implies that $y$ is a divisor of $c$. By checking all possible divisors, we can solve \eqref{eq:3mon2var} easily. Hence, we may assume that $ql=0$, and, without loss of generality, $q=0$. Then equation \eqref{eq:3mon2var} reduces to
\begin{equation}\label{eq:axnpbxkylpcym}
a x^n + b x^k y^l + c y^m = 0
\end{equation}

The following theorem is the first main result of this work.

\begin{theorem}\label{th:3mon2var} 
There is an algorithm that, given any three-monomial equation in $2$ variables, describes the set of all its integer solutions.
\end{theorem}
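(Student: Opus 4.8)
The reduction preceding the theorem already brings the general two-variable three-monomial equation to \eqref{eq:axnpbxkylpcym}, namely $ax^n+bx^ky^l+cy^m=0$ with $a,b,c$ nonzero; the plan is to dispatch this equation by a case analysis driven by two engines: Baker's Theorem \ref{th:ellBaker1} through its solvable subfamily \eqref{eq:cympaxnpb}, and Walsh's effective Runge theorem (Theorem \ref{th:Runge}). As before, only solutions with $xy\neq0$ need be found, since $x=0$ and $y=0$ are checked directly.

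First I would clear the degenerate shapes of \eqref{eq:axnpbxkylpcym}. If the middle monomial is pure ($k=0$ or $l=0$), or if a variable occurs in only one monomial ($n=0$ or $m=0$), then, after splitting into residue classes modulo a coefficient exactly as in the reduction of \eqref{eq:cympaxnpb} to \eqref{eq:cympaxnpbred}, the equation collapses to a one-variable, two-monomial, or superelliptic equation $y^m=P(x)$, all covered by earlier results and by Theorem \ref{th:ellBaker1} (with its elementary low-degree exceptions). If $P=ax^n+bx^ky^l+cy^m$ is reducible over $\mathbb{Q}$, I would factor it and recurse on the factors, each of which has fewer monomials or smaller degree, a two-monomial factor being solved by \cite{bell1933reciprocal,ward1933type}. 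This leaves the case $n,m,k,l\ge1$ with $P$ irreducible over $\mathbb{Q}$.

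For such $P$ I would split on the sign of $nl+mk-nm$, which locates the mixed exponent $(k,l)$ relative to the segment from $(n,0)$ to $(0,m)$. If $nl+mk>nm$, Runge's condition (C1) holds and Theorem \ref{th:Runge} settles the equation at once. If $nl+mk=nm$, the three exponents are collinear; then the substitution $u=x^{\gcd(n,k)}$, $v=y^{\gcd(l,m)}$ turns $P$ into an ordinary homogeneous binary form in $u,v$, so any solution with $uv\neq0$ makes $u/v$ a rational root of a fixed one-variable polynomial and hence satisfies one of finitely many two-monomial equations, solved by \cite{bell1933reciprocal,ward1933type}. In the last regime $nl+mk<nm$ the top weighted form is exactly $ax^n+cy^m$: if it factors into coprime non-constant polynomials then (C2) holds and Theorem \ref{th:Runge} again applies, and otherwise we are precisely in the Masser-type situation of Proposition \ref{prop:x4paxypy3}. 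There I would run the descent of that proposition in general — set $x=dx_1$, $y=dy_1$ with $\gcd(x_1,y_1)=1$, cancel the common power of $d$, and use coprimality repeatedly to force $x_1,y_1,d$ to be divisors of $a,b,c$ times perfect powers — collapsing \eqref{eq:axnpbxkylpcym} into finitely many equations $AX^P+BY^Q=C$ of the form \eqref{eq:cympaxnpb}, solvable by Theorem \ref{th:ellBaker1}.

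The hard part will be making this last descent uniform and provably terminating for every exponent pattern with $nl+mk<nm$. In Proposition \ref{prop:x4paxypy3} the chain of divisibilities closed up neatly because of the particular exponents; in general one must control how the prime factors of $d$ are shared with $x_1$ and $y_1$ (the step $x_1\mid d$ is exactly where coprimality of $x_1,y_1$ alone does not suffice), prove that successively extracting greatest common divisors and perfect-power parts terminates in a binomial-equals-constant equation rather than cycling, and verify that the superelliptic equations so produced genuinely satisfy the hypotheses of Theorem \ref{th:ellBaker1}, with the sporadic low-degree cases peeled off by hand. Pinning down this descent, together with the exact Newton-polygon criterion for when a trinomial meets Runge's condition, is the crux of the proof.
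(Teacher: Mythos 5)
Your handling of the peripheral cases tracks the paper: the reduction to \eqref{eq:axnpbxkylpcym}, Runge's condition (C1) when $nl+mk>nm$, and the collinear case $nl+mk=nm$, where your substitution $u=x^{\gcd(n,k)}$, $v=y^{\gcd(l,m)}$ is a correct variant of the paper's use of Lemma \ref{le:xypzt}. The genuine gap is the case $nl+mk<nm$ in which the leading form $ax^n+cy^m$ does not split into coprime non-constant factors --- exactly Masser's situation, and the heart of the theorem. There you propose to rerun the divisibility descent of Proposition \ref{prop:x4paxypy3} ``in general'' and then concede that making this descent uniform and terminating is ``the crux of the proof''; that crux is precisely what is never supplied, and it is not a routine extension: the step $d=kx_1$ in that proposition exploits the specific exponents $(4,1,1,3)$, and for general exponents the chain of coprimality extractions does not close up into a binomial-equals-constant equation by any argument you give. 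What the paper actually does here is different in kind. It applies Lemma \ref{le:main} prime by prime: for each prime $p$, among the three valuations $nx_p$, $kx_p+ly_p$, $my_p$ of the monomials (shifted by $a_p,b_p,c_p$ for the finitely many $p$ dividing $abc$), the two smallest must coincide, and the hypothesis $nl+mk<nm$ rules out the case where the middle monomial has strictly the largest valuation. Hence every prime satisfies either $(n-k)x_p=ly_p$ or $kx_p=(m-l)y_p$, linear equations whose non-negative solutions are multiples of the fixed vectors $(l',n')$ and $(m',k')$ of \eqref{eq:lnmkprimedef}, with finitely many bounded corrections for $p$ dividing $abc$ supplied by Lemma \ref{le:nxmyb}. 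Multiplying over all primes produces finitely many pairs $(x_i,y_i)$ such that every solution has the form $x=x_iu^{l'}v^{m'}$, $y=y_iu^{n'}v^{k'}$, and the identities $nl'-kl'-ln'=0$ and $mk'-km'-lk'=0$ make the substituted equation collapse to \eqref{eq:xnpxkylpymred}, an equation of the form \eqref{eq:cympaxnpb} in $(u,v)$, solvable by Theorem \ref{th:ellBaker1}. This valuation-plus-linear-equations mechanism is the missing idea; without it, or a completed substitute, the proposal does not prove the theorem.

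A secondary flaw: your plan to secure irreducibility for Runge by factoring $P$ and recursing on the factors is not well-founded. A factor of a trinomial need not have at most three monomials (for instance $x^7+x^2y^5+y^7=(x^2+xy+y^2)(x^5-x^4y+x^2y^3-xy^4+y^5)$), so the factor equations fall outside the family covered by the theorem and outside every solvable family you cite, and ``smaller degree'' is not a usable induction parameter because no algorithm for general two-variable equations of degree at least $3$ is known. The paper avoids this entirely: in the case $nl+mk>nm$ it substitutes $X=x^d$, $Y=y^d$ with $d=\gcd(n,m,k,l)$, after which the trinomial is irreducible by the criterion it cites, and in the case $nl+mk<nm$ it never invokes Runge, so irreducibility is never needed there.
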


Before proving the theorem, we state and prove several easy lemmas. 

\begin{lemma}\label{le:xypzt}
	Every integer solution to equation
	\begin{equation}\label{eq:xypzt}
		xy-zt=0
	\end{equation} 
	is of the form
	\begin{equation}\label{eq:xypztsol}
		(x,y,z,t)=(uv,wr,uw,vr) \quad \text{for some integers} \,\, u,v,w,r. 
	\end{equation}
\end{lemma}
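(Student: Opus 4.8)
The plan is to reverse-engineer the desired factorization from the target form. Note first that for any integers $u,v,w,r$ the tuple $(uv,wr,uw,vr)$ satisfies $xy = uvwr = zt$, so the displayed form is always a solution; only the converse requires proof. The key observation is that in the target form one has $\gcd(x,z) = \gcd(uv,uw) = u\cdot\gcd(v,w)$, which suggests taking $u = \gcd(x,z)$ and then recovering $v = x/u$, $w = z/u$, and $r = y/w = t/v$. The compatibility condition $y/w = t/v$ needed for $r$ to be well defined is precisely $xy = zt$, which is exactly our hypothesis; this is the conceptual heart of the lemma.

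Concretely, I would first dispose of the degenerate case $x = z = 0$ directly: here $xy = zt$ holds automatically, and setting $u = 0$, $r = \gcd(y,t)$, $w = y/r$, $v = t/r$ (with the all-zero subcase handled trivially) produces the required representation. Assuming now that $(x,z) \neq (0,0)$, set $u = \gcd(x,z) > 0$ and write $x = uv$, $z = uw$ with $\gcd(v,w) = 1$. Substituting into $xy = zt$ and cancelling the nonzero factor $u$ gives the reduced identity $vy = wt$.

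From $vy = wt$ together with $\gcd(v,w) = 1$ I would invoke Euclid's lemma twice: $v \mid wt$ forces $v \mid t$, and $w \mid vy$ forces $w \mid y$. Writing $t = vr$ and substituting back into $vy = wt = wvr$ yields $y = wr$ whenever $v \neq 0$; combined with $x = uv$, $z = uw$, and $t = vr$, this is exactly the claimed form.

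The main difficulty here is not conceptual but bookkeeping: the boundary subcases in which one of $u,v,w$ vanishes, and the fact that the \emph{single} parameter $r$ must serve simultaneously for both $y$ and $t$. For instance, when $v = 0$ (i.e. $x = 0$), the relation $\gcd(v,w) = 1$ forces $w = \pm 1$, and one must instead choose $r = wy$ so that $y = wr$ and $t = vr = 0$ hold at once. I expect the crux of the write-up to be verifying, in these few residual cases where the coprime cancellation is no longer available, that one consistent value of $r$ always works; this consistency is guaranteed by the hypothesis $xy = zt$ but needs to be checked by hand rather than deduced from the generic argument.
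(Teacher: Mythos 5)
Your proof is correct and follows essentially the same route as the paper's: set $u=\gcd(x,z)$, write $x=uv$, $z=uw$ with $\gcd(v,w)=1$, cancel $u$ to get $vy=wt$, and apply Euclid's lemma to produce $r$. The only difference is bookkeeping — the paper disposes of the case $x=0$ up front (so that $v\neq 0$ holds automatically in the main argument), whereas you split off $(x,z)=(0,0)$ and then treat the residual subcase $v=0$ by hand; both organizations are sound.
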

\begin{proof}
	For solutions with $x=0$ the statement is trivial. Let $(x,y,z,t)$ be any solution with $x\neq 0$, and let $u=\text{gcd}(x,z)>0$. Then $x=uv$ and $z=uw$ with $v,w$ coprime and $v\neq 0$. Then \eqref{eq:xypzt} implies that $uvy=uwt$, or $vy=wt$. Because $v$ and $w$ are coprime, $t$ must be divisible by $v$, so we can write $t=vr$ for some integer $r$. Then $vy=w(vr)$, hence $y=wr$, and \eqref{eq:xypztsol} follows. 
\end{proof}

\begin{lemma}\label{le:nxmyb}
	Let $n,m$ be non-negative integers that are not both zero, and $b$ be an arbitrary integer. If $b$ is not a multiple of $\text{gcd}(n,m)$, then equation
	\begin{equation}\label{eq:nxmyb}
		n x = m y + b, \quad\quad x,y \geq 0
	\end{equation}
    has no solutions in non-negative integers $x,y$. Otherwise the solutions to \eqref{eq:nxmyb} are given by
    \begin{equation}\label{eq:nxmybsol}
    	x = x_0 + \frac{m}{\text{gcd}(n,m)} u, \quad y = y_0 + \frac{n}{\text{gcd}(n,m)} u, \quad u \geq 0,
    \end{equation}
    where $(x_0,y_0)$ is the solution to \eqref{eq:nxmyb} with $x+y$ minimal. Note that $(x_0,y_0)$ depends only on $n,m,b$.
\end{lemma}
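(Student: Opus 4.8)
The plan is to let $g = \gcd(n,m)$ (noting $g \geq 1$ since $n,m$ are not both zero) and to treat the two claims separately. For the first claim I would observe that any solution satisfies $b = nx - my$, and since $g \mid n$ and $g \mid m$ this forces $g \mid b$. Contrapositively, if $g \nmid b$ then no solution exists, proving the first assertion in essentially one line.

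For the second claim, assume $g \mid b$ and that at least one nonnegative solution exists, so that the minimal-sum solution $(x_0,y_0)$ is well-defined (by well-ordering applied to the nonnegative integer quantity $x+y$ over the nonempty solution set). I would first verify by direct substitution that every pair of the stated form is a nonnegative solution: plugging $x = x_0 + (m/g)u$ and $y = y_0 + (n/g)u$ into $nx - my$ makes the $u$-terms cancel (both equal $nmu/g$), leaving $nx_0 - my_0 = b$, and nonnegativity is immediate since $x_0,y_0,u \geq 0$ and the coefficients $m/g,\ n/g$ are nonnegative.

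The converse --- that every solution has this form --- is the heart of the argument. Given an arbitrary solution $(x,y)$, subtracting the relations $nx = my + b$ and $nx_0 = my_0 + b$ yields $n(x-x_0) = m(y-y_0)$, hence $(n/g)(x-x_0) = (m/g)(y-y_0)$. Since $\gcd(n/g, m/g) = 1$, the factor $m/g$ must divide $x-x_0$, so $x - x_0 = (m/g)u$ for some integer $u$, and back-substitution gives $y - y_0 = (n/g)u$. This recovers the parametrization for some $u \in \mathbb{Z}$.

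The last and most delicate step is to show $u \geq 0$ rather than $u$ ranging over all of $\mathbb{Z}$, which is exactly where the nonnegativity constraint and the minimality of $(x_0,y_0)$ enter. Since $x + y = (x_0 + y_0) + \frac{n+m}{g}u$ and the coefficient $\frac{n+m}{g}$ is strictly positive (again because $n,m$ are not both zero), the sum $x+y$ is strictly increasing in $u$; minimality therefore forces $u = 0$ at $(x_0,y_0)$ and $u \geq 0$ for every other solution, which also confirms the minimal-sum solution is unique. The main thing to be careful about is the degenerate cases $n = 0$ or $m = 0$, where one progression is constant and a nonnegative solution may fail to exist even when $g \mid b$; I would check directly that the parametrization still describes the solution set correctly there, with the understanding that when no nonnegative solution exists the second claim is vacuous.
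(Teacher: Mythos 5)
Your proposal is correct and follows essentially the same route as the paper: the divisibility argument for the first claim, and, for the second, subtracting the minimal solution to reduce to the homogeneous equation, using coprimality of $n/\gcd(n,m)$ and $m/\gcd(n,m)$ to extract the integer parameter $u$, and invoking minimality of $x_0+y_0$ to force $u \geq 0$. Your packaging of the last step via the strictly increasing sum $x+y = x_0+y_0+\frac{n+m}{\gcd(n,m)}u$ is a slightly cleaner version of the paper's assertion that $x-x_0\geq 0$ and $y-y_0\geq 0$, and your explicit attention to the degenerate cases $n=0$ or $m=0$ (and to vacuity when no solution exists) handles details the paper leaves implicit.
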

\begin{proof}
	If $b$ is not a multiple of $d=\text{gcd}(n,m)$, then $n x$ is divisible by $d$, while $m y + b$ is not, hence \eqref{eq:nxmyb} has no integer solutions. Now assume that $b$ is a multiple of $d$. The corresponding homogeneous equation
	$$
	n x = m y, \quad\quad x,y \geq 0
	$$
	has solutions
	\begin{equation}\label{eq:nxmysol}
		x = \frac{m}{\text{gcd}(n,m)} u, \quad y = \frac{n}{\text{gcd}(n,m)} u,
	\end{equation}
	where $u$ is an arbitrary non-negative integer. If $(x,y)$ is an arbitrary solution to \eqref{eq:nxmyb}, and $(x_0,y_0)$ is the solution to \eqref{eq:nxmyb} with $x+y$ minimal, then 
	$$
	n(x-x_0) = (my+b) - (m y_0+b) = m(y-y_0).
	$$
	Because $(x_0,y_0)$ is the solution with $x+y$ minimal, this implies that $x-x_0\geq 0$, $y-y_0\geq 0$, and, by \eqref{eq:nxmysol}, 
	$$
		x-x_0 = \frac{m}{\text{gcd}(n,m)} u, \quad y-y_0 = \frac{n}{\text{gcd}(n,m)} u.
	$$
	for some integer $u\geq 0$. 
\end{proof}

\begin{lemma}\label{le:main}
	Let $A,B,C$ be any integers such that $A+B+C=0$. Let $p$ be any prime, and let $A_p, B_p$ and $C_p$ be the exponents with which $p$ enters the prime factorizations of $A,B$ and $C$, respectively\footnote{In some works, the exponent with which $p$ enters the prime factorization of $x$ is denoted $v_p(x)$. In this work, we prefer a shorter notation.}. Then the smallest two of the integers $A_p, B_p, C_p$ must be equal.	
\end{lemma}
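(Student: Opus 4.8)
The plan is to reduce the statement to a single elementary divisibility observation: if a prime power divides two of three integers that sum to zero, it must divide the third. Conceptually this is just the non-archimedean (ultrametric) behaviour of $p$-adic valuations, $v_p(x+y) \ge \min(v_p(x), v_p(y))$, with equality whenever $v_p(x) \ne v_p(y)$; the lemma is precisely the ``equality'' clause applied to $A + B = -C$. I will present it through a short contradiction argument rather than invoking valuation theory abstractly, keeping everything at the level of divisibility by $p^e$.

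First I would dispose of the degenerate cases in which one of $A, B, C$ vanishes. If, say, $A = 0$, then $B = -C$, so $B_p = C_p$; treating the valuation of $0$ as $+\infty$, the two smallest of $A_p, B_p, C_p$ are then $B_p$ and $C_p$, which are equal, and the claim holds. Hence I may assume $A, B, C$ are all nonzero, so that $A_p, B_p, C_p$ are genuine non-negative integers.

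For the main case I would use the symmetry of the hypothesis $A + B + C = 0$ to relabel. Since the three letters play identical roles, I may assume without loss of generality that $A_p \le B_p \le C_p$; the assertion ``the smallest two are equal'' then becomes simply $A_p = B_p$. Suppose instead that $A_p < B_p$. Because $B_p \le C_p$, the prime power $p^{B_p}$ divides both $B$ and $C$, hence it divides their sum $B + C = -A$, giving $A_p \ge B_p$. This contradicts $A_p < B_p$, so $A_p = B_p$ as required.

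I expect no serious obstacle: the entire content is the one-line divisibility step, and the only points demanding care are purely organisational. These are the legitimacy of the ``without loss of generality'' ordering — which is valid exactly because $A + B + C = 0$ is fully symmetric in $A, B, C$ — and the bookkeeping convention for any zero entries handled above. No deep or non-elementary input is needed.
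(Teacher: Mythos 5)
Your proof is correct and takes essentially the same approach as the paper's: an elementary contradiction argument from divisibility by a prime power, exploiting $A+B+C=0$ (the paper divides all three terms by $p^{M}$ with $M$ the minimum valuation and notes the sum cannot vanish modulo $p$; you equivalently observe that $p^{B_p}$ divides two of the terms and hence must divide the third). A minor bonus is that you explicitly treat the degenerate case where some of $A,B,C$ vanish, which the paper's proof leaves implicit.
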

\begin{proof}
Let $M=\min\{A_p, B_p, C_p\}$, and let $M'$ be the second-smallest among these integers.
Then $A,B,C$ are all divisible by $p^M$, and
$$
\frac{A}{p^M} + \frac{B}{p^M} + \frac{C}{p^M} = 0.    
$$
However, if $M<M'$, then exactly two out of three integers $\frac{A}{p^M}, \frac{B}{p^M}, \frac{C}{p^M}$ are divisible by $p$, hence their sum is not divisible by $p$ and therefore cannot be $0$, a contradiction. Hence, $M=M'$.
\end{proof}

Now we are ready to prove Theorem \ref{th:3mon2var}.

\begin{proof} \emph{of Theorem \ref{th:3mon2var}}. 

We may assume that the equation is of the form \eqref{eq:axnpbxkylpcym} with non-zero integer coefficients $a,b,c$ and non-negative integer exponents $n,k,l,m$. Equation \eqref{eq:axnpbxkylpcym} is trivial if $m=n=0$, so we will assume that $m>0$ or $n>0$. Also, if $k=l=0$, then equation \eqref{eq:axnpbxkylpcym} is of the form \eqref{eq:cympaxnpb}, hence we may assume that $k>0$ or $l>0$. 
Next, we claim that we may assume that 
\begin{equation}\label{eq:Rungexnpxkylpym}
	nl+mk \leq mn.
\end{equation}

Indeed, we must have either $l=0$, or $k=0$, or $kl>0$. If $l=0$, then \eqref{eq:axnpbxkylpcym} reduces to $a x^n + b x^k + c y^m = 0$, hence \eqref{eq:Rungexnpxkylpym} can be ensured by swapping $(a,n)$ with $(b,k)$ if necessary. The case $k=0$ is similar. Finally, if $kl>0$ and $nl+mk>mn$, then substitution $X=x^d$, $Y=y^d$ where $d=\text{gcd}(n,m,k,l)$ reduces \eqref{eq:axnpbxkylpcym} to 
$$
a X^{n/d} + b X^{k/d} Y^{l/d} + c Y^{m/d} = 0, \quad \text{where} \quad \frac{n}{d} \frac{l}{d} + \frac{m}{d} \frac{k}{d} > \frac{m}{d} \frac{n}{d}.
$$
Because $\text{gcd}(n/d,m/d,k/d,l/d)=1$, polynomial $P(X,Y)=a X^{n/d} + b X^{k/d} Y^{l/d} + c Y^{m/d}$ is irreducible by \cite[Example 1]{MR1816701}, and condition (C1) in Theorem \ref{th:Runge} is satisfied. Hence, by Theorem \ref{th:Runge}, equation $P(X,Y)=0$ has finitely many integer solutions and there is an algorithm for listing these solutions. Then it is easy to check which of these solutions are perfect $d$-th powers, and produce a complete list of integer solutions $(x,y)$ to \eqref{eq:axnpbxkylpcym}.
	
From now on, we will consider equation \eqref{eq:axnpbxkylpcym} subject to condition \eqref{eq:Rungexnpxkylpym}. First assume that equality holds in \eqref{eq:Rungexnpxkylpym}, that is, $nl+mk=mn$, or $mk=n(m-l)$. By solving this as an equation of the form \eqref{eq:xypzt} with variables $m,k,n,m-l$, we conclude that there exists integers $u,v,w,r$ such that $(m,k,n,m-l)=(uv,wr,uw,vr)$, see \eqref{eq:xypztsol}. By replacing $u,v,w,r$ by their absolute values if necessary, we may assume that these integers are non-negative.    
Now divide both parts of \eqref{eq:axnpbxkylpcym} by $y^m$ to get
$$
a \frac{x^n}{y^{m}} + b \frac{x^k}{y^{m-l}} + c = 0,
$$
or equivalently
$$
a \left(\frac{x^w}{y^v}\right)^u + b \left(\frac{x^w}{y^v}\right)^r + c = 0.
$$
With a new rational variable $t=\frac{x^w}{y^v}$, this is an equation $at^u+bt^r+c=0$ in one variable, whose rational solutions are easy to find, see the discussion after equation \eqref{eq:onevar} in Section \ref{sec:2var}. For each rational solution $t=\frac{p_i}{q_i}$,  we can find $x$ and $y$ from an easy two-monomial equation $q_i x^w = p_i y^v$.

From now on, we will assume that
\begin{equation}\label{eq:Rungexnpxkylpym2}
nl+mk < mn.
\end{equation}

Let $(x,y)$ be any integer solution to \eqref{eq:axnpbxkylpcym}. Because solutions with $xy=0$ can be described easily, we may and will assume that $xy\neq 0$.
Let $p$ be any prime, and let $x_p$ and $y_p$ be the exponents with which $p$ enters the prime factorizations of $x$ and $y$, respectively. First assume that $p$ is not a divisor of $abc$. Then $p$ enters the prime factorization of the monomials $a x^n$, $b x^k y^l$ and $c y^m$ in \eqref{eq:axnpbxkylpcym} with exponents $nx_p$, $kx_p+ly_p$ and $my_p$, respectively. Let $M=\min\{nx_p,kx_p+ly_p,my_p\}$, and $M'$ be the second-smallest among these integers. By Lemma \ref{le:main}, $M=M'$. If $kx_p+ly_p > M$, then we must have
$$
kx_p+ly_p > nx_p = my_p.
$$ 
The last equality implies that $x_p=um$, $y_p=un$ for some rational $u\geq 0$. But then
$$
k(um) + l(un) > n(um) \quad \Leftrightarrow \quad u(km + ln) > u(nm),
$$
which is a contradiction with \eqref{eq:Rungexnpxkylpym2}. Hence, $kx_p+ly_p=M$. Then, for every prime $p$, we have one of the following two cases: either 
\begin{equation}\label{eq:primecase1}
	kx_p+ly_p = nx_p \leq my_p
\end{equation}
or 
\begin{equation}\label{eq:primecase2}
	kx_p+ly_p = my_p < nx_p.
\end{equation}
In the first case, we have
$
ly_p = (n-k)x_p.
$
By \eqref{eq:Rungexnpxkylpym2}, $n-k>0$, hence, by \eqref{eq:nxmysol}, 
\begin{equation}\label{eq:primecondI}
x_p = \frac{l}{\text{gcd}(l,n-k)} u_p, \quad y_p = \frac{n-k}{\text{gcd}(l,n-k)} u_p
\end{equation}
for some integer $u_p \geq 0$ that depends on $p$. 
Similarly, in case \eqref{eq:primecase2}, we have
$
kx_p = (m-l)y_p,
$
which by \eqref{eq:nxmysol} implies that
\begin{equation}\label{eq:primecondII}
x_p = \frac{m-l}{\text{gcd}(k,m-l)} v_p, \quad y_p = \frac{k}{\text{gcd}(k,m-l)} v_p,
\end{equation}
for some integer $v_p \geq 0$. 
With notations
\begin{equation}\label{eq:lnmkprimedef}
l'=\frac{l}{\text{gcd}(l,n-k)}\,; \quad n'=\frac{n-k}{\text{gcd}(l,n-k)}\,; \quad m'=\frac{m-l}{\text{gcd}(k,m-l)}\,; \quad k'= \frac{k}{\text{gcd}(k,m-l)}\,,
\end{equation}
formulas \eqref{eq:primecondI} and \eqref{eq:primecondII} simplify to $x_p=l'u_p$, $y_p=n'u_p$ and $x_p=m'v_p$, $y_p=k'v_p$, respectively.

Now consider primes $p$ that are the factors of $abc$. Obviously, there are only finitely many such primes. Let $x_p,y_p,a_p,b_p,c_p$ be the exponents with which $p$ enters the prime factorizations of $x,y,a,b,c$, respectively. Then $p$ enters the prime factorization of the monomials $ax^n$, $bx^k y^l$ and $cy^m$ in \eqref{eq:axnpbxkylpcym} with exponents $a_p+nx_p$, $b_p+kx_p+ly_p$ and $c_p+my_p$, respectively. Let $M=\min\{a_p+nx_p,b_p+kx_p+ly_p,c_p+my_p\}$, and $M'$ be the second-smallest among these integers. By Lemma \ref{le:main}, $M=M'$.

First assume that $b_p+kx_p+ly_p > M$, then we must have
\begin{equation}\label{eq:subcase1}
b_p+kx_p+ly_p > a_p+nx_p = c_p+my_p.
\end{equation}
By Lemma \ref{le:nxmyb}, the last equality implies the existence of a pair of integers $(x_p^0,y_p^0)$ depending only on $p,m,n,a_p$ and $c_p$ such that
\begin{equation}\label{eq:firstcase}
	x_p = x^0_p + \frac{m}{\gcd(n,m)}u_p, \quad y_p = y^0_p + \frac{n}{\gcd(n,m)}u_p \quad \text{for some integer} \quad u_p \geq 0.
\end{equation}
But then the inequality in \eqref{eq:subcase1} 
$$
b_p + k\left(x^0_p + \frac{m}{\gcd(n,m)}u_p\right) + l\left(y^0_p + \frac{n}{\gcd(n,m)}u_p\right) > a_p + n\left(x^0_p + \frac{m}{\gcd(n,m)}u_p\right)
$$ 
is equivalent to
$$
b_p + k x^0_p + l y^0_p - a_p - n x^0_p > \frac{nm-km-ln}{\gcd(n,m)}u_p
$$
Let $h_p=b_p + k x^0_p + l y^0_p - a_p - n x^0_p$ be the left-hand side of the last inequality. Notice that $h_p$ does not depend on $x_p,y_p$. Because $nm-km-ln>0$ by \eqref{eq:Rungexnpxkylpym2}, we have 
$$
0 \leq u_p < h_p \frac{\gcd(n,m)}{nm-km-ln}.
$$ 
Because every bounded interval contains only a finite number of integers, there is only a finite number of possible integer values for $u_p$, hence a finite number of possibilities for $(x_p,y_p)$. 

Now consider the case $b_p+kx_p+ly_p=M$. Because $M=M'$, we must have either 
\begin{equation}\label{eq:subcase2}
	b_p+kx_p+ly_p = a_p+nx_p \leq c_p+my_p
\end{equation}
or 
\begin{equation}\label{eq:subcase3}
	b_p+kx_p+ly_p = c_p+my_p < a_p+nx_p. 
\end{equation}
In the first case, 
$
b_p + l y_p = a_p + (n-k)x_p,
$
and, by Lemma \ref{le:nxmyb}, this implies the existence of pair of integers $(x_p^0,y_p^0)$ depending only on $p,l,n-k,a_p$ and $b_p$ such that
\begin{equation}\label{eq:primecondB}
	x_p = x^0_p + l' u_p, \quad y_p = y^0_p + n' u_p \quad \text{for some integer} \quad u_p \geq 0,
\end{equation}
where $l'$ and $n'$ are defined in \eqref{eq:lnmkprimedef}. 
Similarly, in the second case, there is a pair of integers $(x_p^0,y_p^0)$ depending only on $p,k,m-l,b_p$ and $c_p$ such that
\begin{equation}\label{eq:primecondC}
	x_p = x^0_p + m' v_p, \quad y_p = y^0_p + k' v_p \quad \text{for some integer} \quad v_p \geq 0,
\end{equation}
where $m'$ and $k'$ are defined in \eqref{eq:lnmkprimedef}.

The argument above implies that, given any non-zero integer solution $(x,y)$ to equation \eqref{eq:axnpbxkylpcym} satisfying \eqref{eq:Rungexnpxkylpym2}, we can partition the set ${\cal P}$ of all primes into five disjoint sets ${\cal P}=\bigcup_{i=1}^5 {\cal P}_i$, where 
\begin{itemize}
	\item ${\cal P}_1$ is the set of primes that are not divisors of $abc$ and satisfy \eqref{eq:primecase1};
	\item ${\cal P}_2$ is the set of primes that are not divisors of $abc$ and satisfy \eqref{eq:primecase2};
	\item ${\cal P}_3$ is the set of prime divisors of $abc$ satisfying \eqref{eq:subcase1};
	\item ${\cal P}_4$ is the set of prime divisors of $abc$ satisfying \eqref{eq:subcase2};
	\item ${\cal P}_5$ is the set of prime divisors of $abc$ satisfying \eqref{eq:subcase3}.
\end{itemize}

Let us define
\begin{equation}\label{eq:uvdef}
	u = \prod_{p \in {\cal P}_1} p^{u_p} \cdot \prod_{p \in {\cal P}_4} p^{u_p},  \quad v = \prod_{p \in {\cal P}_2} p^{v_p} \cdot \prod_{p \in {\cal P}_5} p^{v_p}.
\end{equation}
Then
$$
\prod_{p \in {\cal P}_1} p^{x_p} \cdot \prod_{p \in {\cal P}_4} p^{x_p} = \prod_{p \in {\cal P}_1} p^{l' u_p} \cdot \prod_{p \in {\cal P}_4} p^{x^0_p} \cdot \prod_{p \in {\cal P}_4} p^{l' u_p} = \left(\prod_{p \in {\cal P}_4} p^{x^0_p}\right)\cdot u^{l'},
$$
and similarly,
$$
\prod_{p \in {\cal P}_2} p^{x_p} \cdot \prod_{p \in {\cal P}_5} p^{x_p} = \left(\prod_{p \in {\cal P}_5} p^{x^0_p}\right)\cdot v^{m'}.
$$
Hence,
$$
x = (-1)^{e_x} \prod_{p \in {\cal P}_1} p^{x_p} \cdot \prod_{p \in {\cal P}_2} p^{x_p} \prod_{p \in {\cal P}_3} p^{x_p} \cdot \prod_{p \in {\cal P}_4} p^{x_p} \prod_{p \in {\cal P}_5} p^{x_p} =
$$
$$
= \left((-1)^{e_x} \prod_{p \in {\cal P}_3} p^{x_p} \prod_{p \in {\cal P}_4} p^{x^0_p} \prod_{p \in {\cal P}_5} p^{x^0_p}\right) u^{l'} v^{m'}.  
$$
where $e_x$ is equal to either $0$ or $1$ depending on the sign of $x$.
By similar argument,
$$
y = \left((-1)^{e_y} \prod_{p \in {\cal P}_3} p^{y_p} \prod_{p \in {\cal P}_4} p^{y^0_p} \prod_{p \in {\cal P}_5} p^{y^0_p}\right) u^{n'} v^{k'},
$$
where $e_y$ is equal to either $0$ or $1$. 

Now, for each given equation \eqref{eq:axnpbxkylpcym}, there is only a finite number of prime divisors of $abc$, hence there is a finite number of possibilities as to how these primes may split into sets ${\cal P}_3$, ${\cal P}_4$ and ${\cal P}_5$. For each given split, integers $\prod_{p \in {\cal P}_4} p^{x^0_p}$, $\prod_{p \in {\cal P}_5} p^{x^0_p}$, $\prod_{p \in {\cal P}_4} p^{y^0_p}$ and $\prod_{p \in {\cal P}_5} p^{y^0_p}$ are uniquely determined. Further, for every $p \in {\cal P}_3$ there is only a finite number of possibilities for $(x_p,y_p)$, hence we have a finite number of possible values for $\prod_{p \in {\cal P}_3} p^{x_p}$ and $\prod_{p \in {\cal P}_3} p^{y_p}$. Finally, $e_x$ and $e_y$ may assume only two values each. In conclusion, we have proved 
that there is a finite number of pairs $(x_i,y_i)$, $i=1,\dots,N$ such that every solution to \eqref{eq:axnpbxkylpcym} with $xy\neq 0$ satisfies 
$$
x = x_i u^{l'} v^{m'}, \quad y = y_i u^{n'} v^{k'}
$$ 
for some $i=1,\dots, N$ and some integers $u$ and $v$. 
Substituting this into \eqref{eq:axnpbxkylpcym}, we obtain
$$
a x_i^n u^{nl'} v^{nm'} + b x_i^k y_i^l u^{kl'} v^{km'} u^{ln'} v^{lk'} + c y_i^m u^{mn'} v^{mk'} = 0,
$$
or, after dividing by $u^{kl'+ln'}v^{km'+lk'}$,
\begin{equation}\label{eq:xnpxkylpymred2}
	a x_i^n u^{nl'-kl'-ln'}v^{nm'-km'-lk'} + b x_i^k y_i^l + c y_i^m u^{mn'-kl'-ln'}v^{mk'-km'-lk'} = 0.
\end{equation}
Now, we have
\begin{equation}\label{eq:cond1}
	nl'-kl'-ln' = (n-k)\frac{l}{\text{gcd}(l,n-k)}-l\frac{n-k}{\text{gcd}(l,n-k)} = 0,
\end{equation}
\begin{equation}\label{eq:cond2}
	nm'-km'-lk' = (n-k)\frac{m-l}{\text{gcd}(k,m-l)}-l\frac{k}{\text{gcd}(k,m-l)} = \frac{nm-nl-km}{\text{gcd}(k,m-l)} > 0,
\end{equation}
where the last inequality follows from \eqref{eq:Rungexnpxkylpym2}. Similarly, $mn'-kl'-ln'> 0$ and $mk'-km'-lk'= 0$. Hence, \eqref{eq:xnpxkylpymred2} reduces to
\begin{equation}\label{eq:xnpxkylpymred}
	a x_i^n v^{nm'-km'-lk'} + b x_i^k y_i^l + c y_i^m u^{mn'-kl'-ln'} = 0,
\end{equation}
which is an equation of the form \eqref{eq:cympaxnpb} in variables $u,v$. In conclusion, we have reduced equation  \eqref{eq:axnpbxkylpcym} to a finite number of equations of the form \eqref{eq:cympaxnpb} that can be solved by Theorem \ref{th:ellBaker1}. 
\end{proof}

Let us now illustrate the method by solving equation  
\begin{equation}\label{eq:x4pxyp2y3}
	x^4+xy+2y^3=0. 
\end{equation}
We have an obvious solution $(x,y)=(0,0)$ and otherwise $x\neq 0$ and $y\neq 0$. For this equation, $a=b=1$ and $c=2$ in \eqref{eq:axnpbxkylpcym}, hence the only prime divisor of $abc$ is $p=2$. For any $p\neq 2$, let $x_p$ and $y_p$ be the exponents with which $p$ enters the prime factorizations of $x$ and $y$, respectively. Then $p$ enters the prime factorization of the monomials $x^4$, $xy$ and $2y^3$ with exponents $4x_p$, $x_p+y_p$ and $3y_p$, respectively. Let $M=\min\{4x_p,x_p+y_p,3y_p\}$, and $M'$ be the second-smallest among these integers. Then, by Lemma \ref{le:main}, we must have $M=M'$. Because the case $x_p+y_p > 4x_p = 3y_p$ is clearly impossible for non-negative $x_p,y_p$, we must have either $x_p+y_p =4x_p$ or $x_p+y_p=3y_p$, or, equivalently, either $y_p=3x_p$ or $x_p=2y_p$. Let ${\cal P}_1$ and ${\cal P}_2$ be the sets of odd primes for which the first and the second equalities hold, respectively, and define non-zero integers
$$
u = (-1)^{e_x}\prod_{p \in {\cal P}_1} p^{x_p}, \quad v = (-1)^{e_x+e_y}\prod_{p \in {\cal P}_2} p^{y_p},
$$
where $e_x=0$ if $x>0$ and $e_x=1$ if $x<0$, and $e_y$ is defined similarly. Then
$$
x = (-1)^{e_x}2^{x_2} \prod_{p \in {\cal P}_1} p^{x_p} \prod_{p \in {\cal P}_2} p^{x_p} = (-1)^{e_x} 2^{x_2} \prod_{p \in {\cal P}_1} p^{x_p} \prod_{p \in {\cal P}_2} p^{2 y_p} = 2^{x_2} u v^2, 
$$
and similarly
$$
y = (-1)^{e_y}2^{y_2} \prod_{p \in {\cal P}_1} p^{y_p} \prod_{p \in {\cal P}_2} p^{y_p} = (-1)^{e_y} 2^{y_2} \prod_{p \in {\cal P}_1} p^{3 x_p} \prod_{p \in {\cal P}_2} p^{y_p} = 2^{y_2} u^3 v, 
$$
where $x_2$ and $y_2$ are the exponents with which $2$ enters the prime factorizations of $x$ and $y$, respectively. Then $2$ enters the prime factorization of the monomials $x^4$, $xy$ and $2y^3$ with exponents $4x_2$, $x_2+y_2$ and $3y_2+1$, respectively. Let $M_1=\min\{4x_2,x_2+y_2,3y_2+1\}$, and $M_1'$ be the second-smallest among these integers. Then we must have $M_1=M_1'$. The case $x_2+y_2 \geq 4x_2 = 3y_2+1$ is clearly impossible for non-negative $x_2,y_2$, hence we must have either (i) $x_2+y_2 = 4x_2$ or (ii) $x_2+y_2 = 3y_2 +1 $. 

\begin{table}
	\footnotesize
	\begin{center}
		\begin{tabular}{ | c | c |c|c|c|c|}
			\hline
			\textbf{Equation \eqref{eq:xnpxkylpym}} & $|x|$ & $|y|$ & \textbf{Equation \eqref{eq:xnpxkylpymred}}& \textbf{Solutions $(u,v)$}&\textbf{Solutions $(x,y) \neq (0,0)$}\\
			\hline\hline

			$x^4+x y+y^2=0$ & $u v$ & $u^3 v$ & $\pm 1 + u^2 + v^2=0$ &$(0,\pm 1),(\pm 1,0)$&-\\\hline 
			$x^4+x y+y^3=0$ & $u v^2$ & $u^3 v$ & $1+u^5+v^5=0$&$(-1,0),(0,-1)$&- \\\hline 
			$x^4+x y+y^4=0$ & $u v^3$ & $u^3 v$ & $\pm 1 + u^8 + v^8=0$&$(0,\pm 1),(\pm1,0)$&- \\\hline 
			$x^4+x y^2+y^3=0$ & $u^2 v$ & $u^3 v$ & $1+u+v=0$&$(w,-1-w)$&$(-w^2(1+w),-w^3(1+w))$ \\\hline 
			$x^4+x^2 y+y^2=0$ & $u v$ & $u^2 v^2$ & $1=0$ &-&-\\\hline 
			$x^4+x^2 y+y^3=0$ & $u v$ & $u^2 v$ & $1+ u^2+v=0$ &$(w,-1-w^2)$&$(-w-w^3,-w^2(1+w^2))$\\\hline 
			$x^4+x^2 y+y^4=0$ & $u v^3$ & $u^2 v^2$ & $\pm 1+ u^4 + v^4=0$&$(0,\pm 1),(\pm1,0)$&- \\\hline 
			$x^4+x^2 y^2+y^4=0$ & $u v$ & $u v$ & $1=0$ &-&- \\\hline 
			$x^4+x^3 y+y^4=0$ & $u v$ & $u v$ & $1=0$ &- &-\\\hline 
			$x^5+x y+y^2=0$ & $u v$ & $u^4 v$ & $1+u^3+v^3=0$&$(-1,0),(0,-1)$&- \\\hline 
			$x^5+x y+y^3=0$ & $u v^2$ & $u^4 v$ & $1+u^7+v^7=0$ &$(-1,0),(0,-1)$&-\\\hline 
			$x^5+x y+y^4=0$ & $u v^3$ & $u^4 v$ & $1+u^{11}+v^{11}=0$ &$(-1,0),(0,-1)$&- \\\hline 
			$x^5+x y+y^5=0$ & $u v^4$ & $u^4 v$ & $1+u^{15}+v^{15}=0$  &$(-1,0),(0,-1)$&- \\\hline
			$x^5+x y^2+y^3=0$ & $u v$ & $u^2 v$ & $1+u+ v^2=0$&$(-1-w^2,w)$&$(-w-w^3,w(1+w^2)^2)$ \\\hline 
			$x^5+x y^2+y^4=0$ & $u v^2$ & $u^2 v$ & $1+u^3 + v^6=0$&$(-1,0)$&- \\\hline 
			$x^5+x y^3+y^4=0$ & $u^3 v$ & $u^4 v$ & $1+u+v=0$&$(w,-1-w)$&$(-w^3(1+w),-w^4(1+w))$ \\\hline 
			$x^5+x^2 y+y^2=0$ & $u v$ & $u^3 v^2$ & $1+u+v=0$&$(w,-1-w)$&$(-w-w^2,w^3(1+w)^2)$ \\\hline 
			$x^5+x^2 y+y^3=0$ & $u v$ & $u^3 v$ & $1 + u^4 \pm v^2=0$&$(0,\pm 1)$&- \\\hline 
			$x^5+x^2 y+y^4=0$ & $u v^3$ & $u^3 v^2$ & $1+u^7+v^7=0$&$(-1,0),(0,-1)$&- \\\hline 
			$x^5+x^2 y+y^5=0$ & $u v^2$ & $u^3 v$ & $1 + u^{10} + v^5=0$&$(0,-1)$&- \\\hline 
			$x^5+x^2 y^2+y^4=0$ & $u^2 v$ & $u^3 v$ & $1+ u^2+v=0$&$(w,-1-w^2)$&$(-w^2(1+w^2),-w^3(1+w^2))$ \\\hline 
			$x^5+x^2 y^2+y^5=0$ & $u^2 v^3$ & $u^3 v^2$ & $1+u^5+v^5=0$&$(-1,0),(0,-1)$&- \\\hline
			$x^5+x^3 y+y^3=0$ & $u v^2$ & $u^2 v^3$ & $1+u+v=0$&$(w,-1-w)$&$(w(1+w)^2,-w^2(1+w)^3)$ \\\hline
			$x^5+x^3 y+y^4=0$ & $u v$ & $u^2 v$ & $1+u^3+v=0$&$(w,-1-w^3)$&$(-w-w^4,-w^2(1+w^3))$ \\\hline 
			$x^5+x^3 y+y^5=0$ & $u v^4$ & $u^2 v^3$ & $1+u^5+v^5=0$&$(-1,0),(0,-1)$&- \\\hline 
			$x^5+x^3 y^2+y^5=0$ & $u v$ & $u v$ & $1=0$ &- &-\\\hline 
			$x^5+x^4 y+y^5=0$ & $u v$ & $u v$ & $1=0$ &-&- \\\hline 
		\end{tabular}
		\caption{Non-trivial solutions to equations \eqref{eq:xnpxkylpym} of degree $4$ and $5$ satisfying  \eqref{eq:Rungexnpxkylpym}. Here, $w$ is an arbitrary integer parameter. \label{tab:xnpxkylpym}}
	\end{center} 
\end{table}

Let us consider these cases separately. In case (i), $y_2=3x_2$, so that
$$
x = 2^{x_2} u v^2 = Uv^2, \quad 
y=2^{y_2} u^3 v = (2^{x_2})^3 u^3 v = U^3 v, \quad \text{where} \quad U=2^{x_2} u \neq 0.
$$ 
Substituting this into equation \eqref{eq:x4pxyp2y3}, we obtain
$$
(Uv^2)^4+(Uv^2)(U^3 v)+2(U^3 v)^3=0,
$$
or after cancelling $U^4v^3$, 
$$
v^5 + 2U^5 = -1.
$$
This is a Thue equation, and such equations can be easily solved in many computer algebra systems including Maple and Mathematica. The solutions to this equation are $(v,U)=(-1,0)$ and $(1,-1)$. The first solution is impossible because $U\neq 0$ by definition, while the second one results in the solution $(x,y)=(Uv^2,U^3v)=(-1,-1)$ to the original equation \eqref{eq:x4pxyp2y3}. 

In case (ii), $x_2= 2y_2+1$, hence
$$
x = 2^{x_2} u v^2 =  2(2^{y_2})^2 u v^2 = 2uV^2, \quad 
y = 2^{y_2} u^3 v = u^3 V, \quad \text{where} \quad V=2^{y_2} v \neq 0.
$$ 
Substituting this into equation \eqref{eq:x4pxyp2y3}, we obtain
$$
(2uV^2)^4+(2uV^2)(u^3 V)+2(u^3 V)^3=0,
$$
or after cancelling $2u^4V^3$, 
$$
8V^5 + u^5 = -1.
$$
This is a Thue equation, whose only solution is $(V,u)=(0,-1)$, a contradiction with $V\neq 0$. In summary, the only integer solutions to equation \eqref{eq:x4pxyp2y3} are $(x,y)=(-1,-1)$ and $(0,0)$.

The algorithm in the proof of Theorem \ref{th:3mon2var} significantly simplifies in the case $a=b=c=1$ in \eqref{eq:axnpbxkylpcym}, that is, for equation
\begin{equation}\label{eq:xnpxkylpym}
	x^n + x^k y^l + y^m = 0,
\end{equation}
because in this case, the sets ${\cal P}_3$, ${\cal P}_4$ and ${\cal P}_5$ in the algorithm are empty sets. As an illustration, Table \ref{tab:xnpxkylpym} lists all non-trivial solutions to all quartic and quintic equations of the form \eqref{eq:xnpxkylpym} satisfying the condition \eqref{eq:Rungexnpxkylpym}.

\section{General three-monomial equations}\label{sec:gen3mon}

\subsection{Format in which we accept the answers}\label{sec:solrepr}

When studying three-monomial equations in $n\geq 3$ variables, we first need to agree in what form we accept the answers. For example, if we insist on polynomial parametrization of all integer solutions, then the equation
\begin{equation}\label{eq:xymztm1}
xy-zt=1
\end{equation}
had been open for over $70$ years, until Vaserstein \cite{vaserstein2010polynomial} proved the existence of polynomials $X,Y,Z,T$ in $46$ variables $u=(u_1,\dots,u_{46})$ with integer coefficients, such that $(x,y,z,t)\in {\mathbb Z}^4$ is a solution to \eqref{eq:xymztm1} if and only if $x=X(u)$, $y=Y(u)$, $z=Z(u)$ and $t=T(u)$ for some $u \in {\mathbb Z}^{46}$. We do not know if there exist polynomial parametrizations of all integer solutions to other easy-looking three-monomial equations, like, for example, 
\begin{equation}\label{eq:x2ymz2m1}
x^2y=z^2+1
\end{equation} 
or
\begin{equation}\label{eq:yztmx2m1}
yzt=x^2+1.
\end{equation}  
On the other hand, all these equations become trivial if we accept answers in which parameters can be restricted to be divisors of polynomial or rational expressions involving other parameters. Specifically, for any integers $m$ and $k\geq 0$, let $D_k(m)$ be the set of all integers $z$ such that $z^k$ is a divisor of $m$. We remark that $D_0(m)$ is the set of all integers, while $D_1(m)$ is the set of all divisors of $m$. This notation allows us to describe the sets of solutions to equations \eqref{eq:xymztm1}-\eqref{eq:yztmx2m1}. For example, the set of all integer solutions to equation \eqref{eq:yztmx2m1} can be described as
$$
(x,y,z,t) = \left(u_1, u_2, u_3, \frac{u_1^2+1}{u_2u_3}\right), \quad u_1 \in {\mathbb Z}, \quad u_2 \in D_1\left(u_1^2+1\right), \quad u_3 \in D_1\left(\frac{u_1^2+1}{u_2}\right).
$$

More generally, we can easily describe the solution sets of all equations that, possibly after permutation of variables, can be written in the form
\begin{equation}\label{eq:x1kx2}
	P(x_1^kx_2, x_3, \dots, x_n) = 0,
\end{equation}
where $k\geq 1$ is an integer and $P$ is a polynomial with integer coefficients such that we know how to describe the integer solution set $S$ of the equation $P(y,x_2,\dots,x_n)=0$. Indeed, integer solutions to \eqref{eq:x1kx2} with $x_1\neq 0$ are
$$
(x_1, x_2, \dots, x_n) = \left(u_1, \frac{u_2}{u_1^k}, u_3, \dots, u_n \right), \quad (u_2,u_3,\dots,u_n) \in S, \quad u_1 \in D_k(u_2),
$$
while the solutions with $x_1=0$ are such that $(0,x_3,\dots,x_n)\in S$ and $x_2$ arbitrary. We remark that equations \eqref{eq:xymztm1}-\eqref{eq:yztmx2m1} are all of the form \eqref{eq:x1kx2}.

\subsection{A direct formula for solving a large class of three-monomial equations}\label{sec:dirfor}

A general three-monomial Diophantine equation can be written in the form
\begin{equation}\label{eq:gen3mon2}
	a \prod_{i=1}^n x_i^{\alpha_i} + b \prod_{i=1}^n x_i^{\beta_i} = c \prod_{i=1}^n x_i^{\gamma_i},
\end{equation} 
where $x_1,\dots,x_n$ are variables, $\alpha_i$, $\beta_i$ and $\gamma_i$ are non-negative integers, and $a,b,c$ are non-zero integer coefficients. We call a solution $x=(x_1,\dots,x_n)$ to \eqref{eq:gen3mon2} trivial if $\prod_{i=1}^n x_i = 0$ and non-trivial otherwise. The trivial solutions are easy to describe, so we may concentrate on finding the non-trivial ones.

We start by identifying a large class of three-monomial equations whose set of non-trivial integer solutions can be described by an easy and direct formula. 

\begin{proposition}\label{prop:3monformula}
	Assume that a Diophantine equation can be written in the form \eqref{eq:gen3mon2} such that both systems 
	\begin{equation}\label{eq:gen3monsyst1}
		\sum_{i=1}^n \alpha_i z_i = \sum_{i=1}^n \beta_i z_i = \sum_{i=1}^n \gamma_i z_i - 1, \quad \quad z_i \geq 0, \quad i=1,\dots, n, 
	\end{equation}
	and
	\begin{equation}\label{eq:gen3monsyst2}
		\sum_{i=1}^n \alpha_i t_i = \sum_{i=1}^n \beta_i t_i = \sum_{i=1}^n \gamma_i t_i + 1, \quad \quad t_i \geq 0, \quad i=1,\dots, n,
	\end{equation}
    are solvable in non-negative integers $z_i$ and $t_i$. Then all non-trivial integer solutions to \eqref{eq:gen3mon2} are given by 
	\begin{equation}\label{eq:gen3mon2sol}
		x_i = \left(a \prod_{j=1}^n u_j^{\alpha_j} + b \prod_{j=1}^n u_j^{\beta_j}\right)^{z_i} \left(c \prod_{j=1}^n u_j^{\gamma_j}\right)^{t_i} w^{-z_i-t_i} \cdot u_i, \quad i=1,\dots, n, 
	\end{equation} 
	where
	$$
	u_i \in {\mathbb Z}, \, i=1,\dots, n, \quad 
	w \in D_1\left(a \prod_{j=1}^n u_j^{\alpha_j} + b \prod_{j=1}^n u_j^{\beta_j}\right) \cap D_1\left(c \prod_{j=1}^n u_j^{\gamma_j}\right).
	$$
\end{proposition}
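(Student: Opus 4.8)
The plan is to establish the two inclusions separately: \emph{soundness}, that every tuple produced by \eqref{eq:gen3mon2sol} solves \eqref{eq:gen3mon2}, and \emph{completeness}, that every non-trivial solution is produced by \eqref{eq:gen3mon2sol} for suitable parameters. It is convenient to write, for any tuple $y=(y_1,\dots,y_n)$, the three monomial values $A(y)=a\prod_j y_j^{\alpha_j}$, $B(y)=b\prod_j y_j^{\beta_j}$ and $C(y)=c\prod_j y_j^{\gamma_j}$, so that \eqref{eq:gen3mon2} reads $A(x)+B(x)=C(x)$, and to abbreviate $D=A(u)+B(u)$, $E=C(u)$, so that \eqref{eq:gen3mon2sol} becomes $x_i = D^{z_i} E^{t_i} w^{-z_i-t_i} u_i$. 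The hypotheses \eqref{eq:gen3monsyst1} and \eqref{eq:gen3monsyst2} fix one pair of exponent vectors $(z_i)$ and $(t_i)$; I abbreviate their common values as $s := \sum_i \alpha_i z_i = \sum_i \beta_i z_i$, so that $\sum_i \gamma_i z_i = s+1$, and $\tau := \sum_i \alpha_i t_i = \sum_i \beta_i t_i$, so that $\sum_i \gamma_i t_i = \tau-1$.

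For soundness I would substitute the formula into each monomial and collect exponents. Raising $x_i$ to $\alpha_i$ and multiplying over $i$, the sums $\sum_i \alpha_i z_i = s$ and $\sum_i \alpha_i t_i = \tau$ give $A(x) = D^s E^\tau w^{-(s+\tau)} A(u)$; the identical computation for $\beta$ produces $B(x) = D^s E^\tau w^{-(s+\tau)} B(u)$, with the same powers of $D,E,w$ precisely because the $\beta$-sums coincide with the $\alpha$-sums. Adding these yields $A(x)+B(x) = D^s E^\tau w^{-(s+\tau)}\bigl(A(u)+B(u)\bigr) = D^{s+1} E^\tau w^{-(s+\tau)}$. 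For the third monomial the shifted sums $s+1$ and $\tau-1$ give $C(x) = D^{s+1} E^{\tau-1} w^{-(s+\tau)} C(u) = D^{s+1} E^\tau w^{-(s+\tau)}$, matching the sum above, so \eqref{eq:gen3mon2} holds. Integrality is exactly where the divisor $w$ earns its place: since $w \mid D$ and $w \mid E$, writing $D=wp$ and $E=wq$ collapses the formula to $x_i = p^{z_i} q^{t_i} u_i \in \mathbb{Z}$. One must note $w\neq 0$, since the systems force some $z_i>0$ and hence $w^{-z_i-t_i}$ must be a genuine (nonzero) denominator.

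For completeness I would recover an arbitrary non-trivial solution $x$ by the single choice $u_i = x_i$ together with $w = C(x) = c\prod_i x_i^{\gamma_i}$. As all $x_i\neq 0$ and $c\neq 0$, this $w$ is nonzero. With $u=x$ one has $D = A(x)+B(x)$ and $E = C(x) = w$; since $x$ satisfies \eqref{eq:gen3mon2}, in fact $D = E = w$, and $w$ trivially divides both, so the parameters are admissible. Substituting into the formula gives $x_i' = w^{z_i} w^{t_i} w^{-z_i-t_i} x_i = x_i$, so $x$ is reproduced exactly.

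The calculations are routine; the conceptual crux is recognizing the roles of the parameters. The vectors $(z_i),(t_i)$ are engineered so that the $A$- and $B$-monomials acquire identical $(D,E,w)$-powers while the $C$-monomial is shifted by exactly one unit in $D$ and in $E^{-1}$ --- which is precisely what lets the relation $A(x)+B(x)=C(x)$ drop out after factoring. The divisor $w$ then acts as a normalization parameter: without it, the substitution $u=x$ would return $C(x)^{\,z_i+t_i}\,x_i$ rather than $x_i$, capturing only ``inflated'' solutions; allowing $w$ to range over common divisors of $D$ and $E$ is exactly what rescales the output to recover solutions of every size, and is thus the linchpin of completeness. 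The standing hypothesis that \eqref{eq:gen3monsyst1}--\eqref{eq:gen3monsyst2} are solvable is used only to guarantee that admissible $(z_i),(t_i)$ exist.
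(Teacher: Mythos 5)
Your proof is correct and follows essentially the same route as the paper's: the soundness direction is the same substitute-and-collect-exponents computation using the two systems to match the powers of $D$, $E$, $w$ across the three monomials, and the completeness direction uses the identical parameter choice $u_i=x_i$, $w=c\prod_j x_j^{\gamma_j}$ so that $D=E=w$ and the formula reproduces $x_i$. If anything, you spell out two details the paper leaves implicit --- the factorization $D=wp$, $E=wq$ establishing integrality, and the observation that $w\neq 0$ is needed for $w^{-z_i-t_i}$ to make sense.
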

\begin{proof}
Let us check that any $(x_1,\dots, x_n)$ satisfying \eqref{eq:gen3mon2sol} is an integer solution to \eqref{eq:gen3mon2}. The condition on $w$ ensures that all $x_i$ in \eqref{eq:gen3mon2sol} are integers. 
Further, 
$$
a \prod_{i=1}^n x_i^{\alpha_i} = \left(a \prod_{i=1}^n u_i^{\alpha_i} + b \prod_{i=1}^n u_i^{\beta_i}\right)^{\sum_{i=1}^n \alpha_i z_i} \left(c \prod_{i=1}^n u_i^{\gamma_i}\right)^{\sum_{i=1}^n \alpha_i t_i} w^{-\sum_{i=1}^n \alpha_i (z_i+t_i)} \cdot a \prod_{i=1}^n u_i^{\alpha_i},
$$
and
$$
b \prod_{i=1}^n x_i^{\beta_i} = \left(a \prod_{i=1}^n u_i^{\alpha_i} + b \prod_{i=1}^n u_i^{\beta_i}\right)^{\sum_{i=1}^n \beta_i z_i} \left(c \prod_{i=1}^n u_i^{\gamma_i}\right)^{\sum_{i=1}^n \beta_i t_i} w^{-\sum_{i=1}^n \beta_i (z_i+t_i)} \cdot b \prod_{i=1}^n u_i^{\beta_i},
$$
The first equalities in \eqref{eq:gen3monsyst1} and \eqref{eq:gen3monsyst2} imply that the first three factors in these expressions are the same, hence
$$
a \prod_{i=1}^n x_i^{\alpha_i} + b \prod_{i=1}^n x_i^{\beta_i} = \left(a \prod_{i=1}^n u_i^{\alpha_i} + b \prod_{i=1}^n u_i^{\beta_i}\right)^{1+\sum_{i=1}^n \alpha_i z_i} \left(c \prod_{i=1}^n u_i^{\gamma_i}\right)^{\sum_{i=1}^n \alpha_i t_i} w^{-\sum_{i=1}^n \alpha_i (z_i+t_i)}.
$$
On the other hand
$$
c \prod_{i=1}^n x_i^{\gamma_i} = \left(a \prod_{i=1}^n u_i^{\alpha_i} + b \prod_{i=1}^n u_i^{\beta_i}\right)^{\sum_{i=1}^n \gamma_i z_i} \left(c \prod_{i=1}^n u_i^{\gamma_i}\right)^{\sum_{i=1}^n \gamma_i t_i} w^{-\sum_{i=1}^n \gamma_i (z_i+t_i)} \cdot c \prod_{i=1}^n u_i^{\gamma_i}.
$$
The second equalities in \eqref{eq:gen3monsyst1} and \eqref{eq:gen3monsyst2} imply 
that the right-hand sides of the last two expressions are the same, and \eqref{eq:gen3mon2} follows. 

Conversely, let us prove that all non-trivial integer solutions to \eqref{eq:gen3mon2} are covered by \eqref{eq:gen3mon2sol} for some values of the parameters. Indeed, for any given solution $(x_1, \dots, x_n)$ let us take $u_i=x_i$, $i=1,\dots, n$, and $w=c \prod_{i=1}^n x_i^{\gamma_i}$. Then the right-hand side of \eqref{eq:gen3mon2sol} becomes
$$
\left(a \prod_{j=1}^n x_j^{\alpha_j} + b \prod_{j=1}^n x_j^{\beta_j}\right)^{z_i} \left(c \prod_{j=1}^n x_j^{\gamma_j}\right)^{t_i} w^{-z_i-t_i} \cdot x_i = \frac{\left(c \prod_{j=1}^n x_j^{\gamma_j}\right)^{z_i} \left(c \prod_{j=1}^n x_j^{\gamma_j}\right)^{t_i}}{w^{z_i+t_i}} \cdot x_i = x_i,
$$
and \eqref{eq:gen3mon2sol} follows.
\end{proof}

We remark that the same method can also be used to solve the general two-monomial equation, that is, the equation of the form
\begin{equation}\label{eq:gen2mon}
	a \prod_{i=1}^n x_i^{\alpha_i} = b \prod_{i=1}^n x_i^{\gamma_i},
\end{equation} 
where $x_1,\dots,x_n$ are variables, $\alpha_i$ and $\gamma_i$ are non-negative integers, and $a,b$ are non-zero integer coefficients. As above, we will look at non-trivial solutions, that is, ones with $\prod_{i=1}^n x_i \neq 0$. We may assume that $\min\{\alpha_i,\gamma_i\}=0$ for all $i$, otherwise the multiplier $x_i^{\min\{\alpha_i,\gamma_i\}}$ can be cancelled out. Equation \eqref{eq:gen2mon} can be rewritten as 
\begin{equation}\label{eq:gen2mon2}
	\prod_{i=1}^n x_i^{e_i} = r,
\end{equation}
where $e_i=\alpha_i-\gamma_i$, $i=1,\dots,n$ are integers and $r=\frac{b}{a}$ is a rational number. If $e_i\geq 0$ for all $i$, then \eqref{eq:gen2mon2} may have integer solutions only if $r$ is an integer. Because $x_i^{e_i}$ are divisors of $r$ for all $i$, all solutions can be easily found. The case $e_i\leq 0$ for all $i$ is similarly easy, so from now on we assume that not all $e_i$ are of the same sign. If $d=\text{gcd}(e_1,\dots,e_n)$, then \eqref{eq:gen2mon2} may be solvable in integers only if $\sqrt[d]{r}$ is a rational number, say $\frac{B}{A}$. In this case, raising both parts of \eqref{eq:gen2mon} to the power $1/d$ results in\footnote{If $d$ is even, then $L=R$ is equivalent to $L^{1/d}=\pm R^{1/d}$, hence \eqref{eq:gen2mon} reduces to two equations of the form \eqref{eq:gen2monred}.}
\begin{equation}\label{eq:gen2monred}
	A \prod_{i=1}^n x_i^{\alpha'_i} = B \prod_{i=1}^n x_i^{\gamma'_i},
\end{equation}  
where $\alpha'_i=\alpha_i/d$ and $\gamma'_i=\gamma_i/d$, $i=1,\dots, n$ are non-negative integers. The two-monomial versions of systems \eqref{eq:gen3monsyst1} and \eqref{eq:gen3monsyst2} are
$$
	\sum_{i=1}^n \alpha'_i z_i = \sum_{i=1}^n \gamma'_i z_i - 1, \quad \quad z_i \geq 0, \quad i=1,\dots, n, 
$$
and
$$
	\sum_{i=1}^n \alpha'_i t_i = \sum_{i=1}^n \gamma'_i t_i + 1, \quad \quad t_i \geq 0, \quad i=1,\dots, n,
$$
respectively. Because integers $\alpha'_i - \gamma'_i$, $i=1,\dots, n$ are not all of the same sign and do not have a common factor, both these systems are solvable in non-negative integers. Then, by analogy with \eqref{eq:gen3mon2sol}, all non-trivial solutions to \eqref{eq:gen2monred} are given by	\begin{equation}\label{eq:gen2monredsol}
	x_i = \left(A \prod_{j=1}^n u_j^{\alpha'_j}\right)^{z_i} \left(B \prod_{j=1}^n u_j^{\gamma'_j}\right)^{t_i} w^{-z_i-t_i} \cdot u_i, \quad i=1,\dots, n, 
\end{equation} 
where
$$
u_i \in {\mathbb Z}, \, i=1,\dots, n, \quad 
w \in D_1\left(A \prod_{j=1}^n u_j^{\alpha'_j}\right) \cap D_1\left(B \prod_{j=1}^n u_j^{\gamma'_j}\right).
$$

In particular, we have the following result, which we will use later.
\begin{proposition}\label{prop:2monex}
	Let $e_1, \dots, e_k$ be non-zero integers, not all of the same sign, and let $r$ be a rational number. Then equation \eqref{eq:gen2mon2} is solvable in integers $x_1, \dots, x_k$ if and only if $\sqrt[d]{r}$ is a rational number, where $d=\text{gcd}(e_1, \dots, e_k)$.
\end{proposition}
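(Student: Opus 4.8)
The plan is to prove both implications directly, reusing the construction from the two-monomial discussion above. Throughout, write $d=\gcd(e_1,\dots,e_k)$ and $e_i = d\,e_i'$, so that $\gcd(e_1',\dots,e_k')=1$ and, since the $e_i$ are not all of the same sign and $d>0$, neither are the $e_i'$. Note also that in this context $r\neq 0$ (it arises as $b/a$ with $a,b\neq 0$), so in any integer solution every $x_i$ must be non-zero, because every exponent $e_i$ is non-zero.

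For the necessity direction, I would suppose $(x_1,\dots,x_k)$ is an integer solution of \eqref{eq:gen2mon2}. Since all $x_i\neq 0$, the quantity $s:=\prod_{i=1}^k x_i^{e_i'}$ is a well-defined non-zero rational number, and
$$
r=\prod_{i=1}^k x_i^{e_i}=\prod_{i=1}^k x_i^{d e_i'}=\Bigl(\prod_{i=1}^k x_i^{e_i'}\Bigr)^{d}=s^{d}.
$$
Hence $r$ is the $d$-th power of a rational number, i.e. $\sqrt[d]{r}$ is rational.

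For the sufficiency direction, I would assume $\sqrt[d]{r}$ is rational and write $r=s^d$ with $s=B/A$ for non-zero integers $A,B$. The key step is the following claim: because $\gcd(e_1',\dots,e_k')=1$ and the $e_i'$ are not all of the same sign, each of the equations $\sum_{i=1}^k e_i' t_i = 1$ and $\sum_{i=1}^k e_i' z_i = -1$ admits a solution in non-negative integers. Granting this, I set $x_i = B^{t_i}A^{z_i}$; these are non-zero integers, and
$$
\prod_{i=1}^k x_i^{e_i'} = B^{\sum_i e_i' t_i}\,A^{\sum_i e_i' z_i} = B\,A^{-1}=s,
$$
so that $\prod_{i=1}^k x_i^{e_i}=s^{d}=r$, giving the desired integer solution. (This is exactly the specialization of formula \eqref{eq:gen2monredsol}, with the two systems \eqref{eq:gen3monsyst1}--\eqref{eq:gen3monsyst2}, obtained by taking all $u_i=1$ and $w=1$.)

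The main obstacle is the claim about non-negative solvability, which is precisely where the mixed-sign hypothesis is essential. To prove it I would start from an integer (not necessarily non-negative) representation $\sum_i e_i' c_i = 1$, which exists since $\gcd(e_1',\dots,e_k')=1$. Then, using that there is at least one index $p$ with $e_p'>0$ and one index $n$ with $e_n'<0$, I would build ``sign-balanced'' zero-sum vectors having only non-negative entries: when $e_i'>0$, increase coordinate $i$ by $-e_n'>0$ and coordinate $n$ by $e_i'>0$; when $e_i'<0$, increase coordinate $i$ by $e_p'>0$ and coordinate $p$ by $-e_i'>0$. Each such vector leaves $\sum_i e_i' c_i$ unchanged while only increasing coordinates, so adding suitably large non-negative multiples of them clears every negative coordinate and yields a non-negative solution of $\sum_i e_i' t_i = 1$; the identical argument applied to the target $-1$ handles the $z_i$. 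This combinatorial lifting is the only non-routine ingredient; once it is in place, the two displayed computations close the proof.
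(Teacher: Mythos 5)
Your proof is correct and follows essentially the same route as the paper: necessity by observing that $r=\bigl(\prod_i x_i^{e_i'}\bigr)^d$ is a $d$-th power of a rational, and sufficiency by finding non-negative integer solutions of $\sum_i e_i' t_i = 1$ and $\sum_i e_i' z_i = -1$ and plugging them into what is exactly the specialization of \eqref{eq:gen2monredsol} with all $u_i=1$ and $w=1$. The only difference is that you give an explicit B\'ezout-plus-lifting proof of the non-negative solvability claim, which the paper merely asserts from the mixed-sign and coprimality hypotheses.
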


\subsection{Reduction to equations with independent monomials}

In this section we present a general method that reduces any three-monomial equation to a finite number of three-monomial equations with independent monomials, that is, equations of the form
\begin{equation}\label{eq:genindmon}
	a \prod_{i=1}^{n_1} x_i^{\alpha_i} + b \prod_{i=1}^{n_2} y_i^{\beta_i} + c \prod_{i=1}^{n_3} z_i^{\gamma_i} = 0,
\end{equation} 
where $a,b,c$ are integers, $n_1,n_2,n_3$, $\alpha_i$, $\beta_i$ and $\gamma_i$ are non-negative integers, and $x_i,y_i$ and $z_i$ are (different) variables. We remark that if an equation with independent monomials has a  monomial of degree $1$, then it is an equation of the form
\begin{equation}\label{eq:sepvara}
	a x_i + P(x_1,\dots,x_{i-1},x_{i+1},\dots,x_n) = 0, \quad a\neq 0,
\end{equation}
which is trivially solvable. Indeed, if $|a|=1$, then we can take $x_1,\dots,x_{i-1},x_{i+1},\dots,x_n$ to be arbitrary integers and express $x_i$ as $x_i=-P(x_1,\dots,x_{i-1},x_{i+1},\dots,x_n)/a$. If $|a|\geq 2$, \eqref{eq:sepvara} implies that $P(x_1,\dots,x_{i-1},x_{i+1},\dots,x_n)$ must be divisible by $|a|$. So, let us solve the equation $P(x_1,\dots,x_{i-1},x_{i+1},\dots,x_n)=0$ modulo $|a|$, and for each of the (finitely many) solutions $r_1,\dots,r_{i-1},r_{i+1},\dots,r_n$ do the substitution $x_j = |a|y_j + r_j$, $j=1,\dots,i-1,i+1,\dots,n$, where $y_j$ are new integer variables. Then \eqref{eq:sepvara} reduces to 
$$
a x_i = -P(|a|y_1+r_1,\dots,|a|y_{i-1}+r_{i-1},|a|y_{i+1}+r_{i+1},\dots,|a|y_n+r_n),
$$
where all the coefficients of the polynomial on the right-hand side are divisible by $|a|$. We can then cancel $|a|$ and reduce each of these equations to an equation of the form \eqref{eq:sepvara} with $|a|=1$.

An integer solution to \eqref{eq:genindmon} will be called \emph{primitive} if 
$$
\text{gcd}(x_i,y_j)=\text{gcd}(x_i,z_k)=\text{gcd}(y_j,z_k)=1, \quad i=1,\dots, n_1, \,\, j=1,\dots,n_2, \,\, k=1,\dots,n_3.
$$
We remark that we may have, for example, $\text{gcd}(x_i,x_j)>1$ for some $i\neq j$.

\begin{theorem}\label{th:3monred}
	There is an algorithm that, given an arbitrary $3$-monomial equation, reduces the problem of describing all integer solutions of this equation to the same problem for a finite number of three-monomial equations with independent monomials. Moreover, it is sufficient to find only primitive solutions to the resulting equations.
\end{theorem}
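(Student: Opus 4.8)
The plan is to generalize, essentially verbatim, the prime-by-prime valuation analysis from the proof of Theorem~\ref{th:3mon2var}, replacing the two-variable computation (where the admissible exponent vectors at each prime formed a single line) by a finitely generated \emph{cone} of exponent vectors in $n$ variables. Write the equation as $M_1+M_2+M_3=0$, where $M_1=a\prod_i x_i^{\alpha_i}$, $M_2=b\prod_i x_i^{\beta_i}$, $M_3=-c\prod_i x_i^{\gamma_i}$, and restrict to non-trivial solutions, so that every $x_i\neq 0$. For a fixed solution and a prime $p$, let $x_{i,p}$ be the exponent of $p$ in $x_i$; then the exponents of $p$ in $M_1,M_2,M_3$ are the affine forms $a_p+\sum_i\alpha_i x_{i,p}$, $b_p+\sum_i\beta_i x_{i,p}$ and $c_p+\sum_i\gamma_i x_{i,p}$, where $a_p,b_p,c_p$ are the exponents of $p$ in $a,b,c$. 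By Lemma~\ref{le:main} the two smallest of these three forms are equal.

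For the cofinitely many primes $p\nmid abc$ the three forms are linear and homogeneous, and Lemma~\ref{le:main} forces the vector $(x_{i,p})_i$ to lie in one of three cones: the cone ${\cal C}_3$ on which $\sum_i\alpha_i x_{i,p}=\sum_i\beta_i x_{i,p}\le \sum_i\gamma_i x_{i,p}$ (monomials $1,2$ tied and smallest), and the analogous cones ${\cal C}_1$ and ${\cal C}_2$. Each is a cone of non-negative integer vectors cut out by one linear equality and one linear inequality, hence a finitely generated monoid; I would fix a finite generating set for each. Introducing one new integer variable per generator, the ``$p\nmid abc$ part'' of every $x_i$ is written as a product of powers of these new variables, just as $x=x_iu^{l'}v^{m'}$, $y=y_iu^{n'}v^{k'}$ arises in the two-variable argument. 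The decisive point, checked by a direct valuation count, is that a generator lying in ${\cal C}_k$ contributes equal exponents to the two small monomials and a strictly larger exponent only to monomial $k$; hence, after dividing the whole equation by the common factor $\prod_p p^{\min}$, the new variable attached to that generator survives in monomial $k$ alone. The new variables therefore split according to the three cones, and the three monomials end up supported on three disjoint sets of variables. The finitely many primes $p\mid abc$ are handled by a finite case analysis identical in spirit to the split into ${\cal P}_3,{\cal P}_4,{\cal P}_5$ in the proof of Theorem~\ref{th:3mon2var}: here the forms are affine, the relevant equalities become inhomogeneous linear Diophantine equations solved by Lemma~\ref{le:nxmyb}, and each such prime contributes only a bounded set of possibilities, giving finitely many ``base factors'' and finitely many sign choices.

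Collecting everything, each solution variable is written as a fixed base factor times a monomial in the new variables, and substituting into $M_1+M_2+M_3=0$ and cancelling the common factor yields, in each of the finitely many cases, an equation of the form \eqref{eq:genindmon} whose three monomials involve disjoint variables. Since new variables coming from generators of different cones are built from disjoint sets of primes, any two of them are automatically coprime, which is exactly the primitivity condition; variables from the same cone may share factors, matching the remark preceding the statement that $\gcd(x_i,x_j)>1$ is allowed within a monomial. The ``diagonal'' generators, on which all three forms are equal, correspond to the common homogeneous scaling of the three monomials, so discarding them is precisely the passage to primitive solutions, and re-introducing them to recover all solutions is a two-monomial parametrization of the type described by Proposition~\ref{prop:2monex}.

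I expect the main obstacle to be organizational rather than conceptual. Unlike the two-variable case, where each cone was a single ray and every prime carried just one multiplier $u_p$, the cones ${\cal C}_1,{\cal C}_2,{\cal C}_3$ are now higher-dimensional, so each prime's exponent vector must be decomposed into the finitely many cone generators. This decomposition need not be unique, so the resulting parametrization may be redundant; however, mere existence of a decomposition suffices to capture every solution, while non-negativity of the combination guarantees conversely that every choice of new variables yields an admissible exponent pattern. The remaining delicate point is to verify that no new variable leaks into two monomials -- that is, that each resulting equation genuinely has independent monomials -- which reduces to the same strict-inequality mechanism (of the form $nl+mk<mn$) that drives the two-variable proof, and this is where the equality/inequality description of the cones must be used carefully.
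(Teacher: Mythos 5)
Your proposal is correct and follows the same skeleton as the paper's proof: Lemma \ref{le:main} applied prime by prime, a partition of primes into three classes according to which two valuations tie for the minimum, separate treatment of the finitely many primes dividing $abc$ via base points plus homogeneous parts, reconstruction of the $x_i$ as base factors times monomials in the new variables, and primitivity of the reduced equations deduced from the fact that the three groups of new variables are supported on disjoint sets of primes. The genuine difference is the choice of decomposition. The paper takes minimal solutions of the \emph{equalities} only (citing \cite{lambert1987borne}); as a consequence the exponents $e_k,f_k,g_k$ in \eqref{eq:abcdef3mon} may be negative and the coefficients $A,B,C$ in \eqref{eq:ABCdef} merely rational, and a substantial second half of the paper's proof (Lemma \ref{le:ulemma}, Proposition \ref{prop:2monex}, and the case analysis (i)--(iii)) is needed to convert each term $C\prod_k U_k^{e_k}$ back into a genuine monomial with integer coefficient and non-negative exponents. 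You instead build the inequality into the monoid: your cones, e.g. $\{z\ge 0:\ \sum_i\alpha_i z_i=\sum_i\beta_i z_i\le\sum_i\gamma_i z_i\}$, are sets of lattice points of rational polyhedral cones, hence finitely generated monoids (Gordan's lemma), and with generators chosen there every new variable automatically acquires a non-negative exponent in the single monomial where it survives, while the base factors for $p\mid abc$ contribute non-negative prime powers, so all reduced equations have integer coefficients from the start. This eliminates the need for Lemma \ref{le:ulemma} and Proposition \ref{prop:2monex} entirely, at the cost of computing Hilbert bases of cones cut out by an equality and an inequality (equivalently, of an equation system with a slack variable) rather than of a single equation; the redundancy caused by non-unique decompositions is harmless, exactly as you argue.

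Three points need tightening. First, for primes dividing $abc$ you invoke Lemma \ref{le:nxmyb}, but that lemma is stated only for two variables; in $n$ variables you need the general fact that the non-negative solution set of an inhomogeneous linear system (inequality handled by a slack variable) is a finite union of translates of the homogeneous solution monoid, which is the result the paper quotes from \cite{lambert1987borne}, and your phrase ``a bounded set of possibilities'' should say ``finitely many base points'' (the full solution set per prime is infinite). Second, since the theorem asserts an \emph{algorithm}, you must remark that the Hilbert bases of your cones and the finite sets of base points are effectively computable --- standard, but it is exactly what makes the reduction algorithmic. Third, your ``decisive point'' (strictly larger exponent in monomial $k$) fails for the diagonal generators on which all three forms are equal; you do handle these correctly later (their variables drop out of the reduced equation and act as free scaling parameters), so this is a wording issue rather than a gap.
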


We will need the following lemma.
%
%

\begin{lemma}\label{le:ulemma}
	For any non-negative integers $e_1, \dots, e_N$ and non-zero integer $q$, there is a finite set ${\cal M}$  of $N$-tuples $d=(d_1, \dots, d_N)$ of positive integers such that
	\begin{itemize}
		\item[(i)] $\prod_{k=1}^{N} d_k^{e_k}$ is divisible by $q$ for every $d \in {\cal M}$, and
		\item[(ii)] for any non-zero integers $(U_1, \dots, U_N)$, $\prod_{k=1}^{N} U_k^{e_k}$ is divisible by $q$ if and only if there exists $d \in {\cal M}$ such that $U_k/d_k$ are integers for all $k=1,\dots, N$.
	\end{itemize}
	In particular, if $N=1$, then ${\cal M}$ can be chosen to be a one-element set. 
\end{lemma}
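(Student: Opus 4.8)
The plan is to translate the divisibility condition into a statement about prime-exponent vectors, treat each prime dividing $q$ separately using the finiteness of minimal elements of an up-set (Dickson's lemma), and then reassemble the resulting minimal exponents into the tuples $d$.

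First I would reduce to the case $q>0$, since divisibility by $q$ and by $|q|$ coincide, and write the prime factorization $q=\prod_{j=1}^r p_j^{q_{p_j}}$, so that only the finitely many primes $p_1,\dots,p_r$ are relevant. For any non-zero integers $U_1,\dots,U_N$, let $u_{k,p}$ denote the exponent with which a prime $p$ enters the factorization of $U_k$; then the exponent of $p$ in $\prod_k U_k^{e_k}$ is $\sum_k e_k u_{k,p}$, so $q \mid \prod_k U_k^{e_k}$ if and only if $\sum_{k=1}^N e_k u_{k,p_j}\ge q_{p_j}$ for every $j=1,\dots,r$ (for primes not dividing $q$ the inequality is automatic). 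Crucially, the exponent vectors at distinct primes can be prescribed independently, so the divisibility condition is exactly the conjunction, over $j$, of the membership $(u_{1,p_j},\dots,u_{N,p_j})\in S_j$, where
$$
S_j=\Bigl\{x\in{\mathbb Z}_{\ge 0}^N:\ \textstyle\sum_{k=1}^N e_k x_k\ge q_{p_j}\Bigr\}.
$$

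Next I would observe that each $S_j$ is upward closed in ${\mathbb Z}_{\ge 0}^N$ (if $x\in S_j$ and $y\ge x$ coordinatewise then $y\in S_j$, because the $e_k$ are non-negative). By Dickson's lemma, every upward-closed subset of ${\mathbb Z}_{\ge 0}^N$ has only finitely many minimal elements, say $m^{(j,1)},\dots,m^{(j,s_j)}$, and $x\in S_j$ if and only if $x\ge m^{(j,l)}$ for some $l$. I would then define $\mathcal M$ to be indexed by the tuples $(l_1,\dots,l_r)$ with $1\le l_j\le s_j$: to each such choice associate $d=(d_1,\dots,d_N)$ with $d_k=\prod_{j=1}^r p_j^{\,m^{(j,l_j)}_k}$, so that $d_k$ has no prime factors outside $\{p_1,\dots,p_r\}$. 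This makes $\mathcal M$ finite. Property (i) follows because $m^{(j,l_j)}\in S_j$ gives $\sum_k e_k m^{(j,l_j)}_k\ge q_{p_j}$ for every $j$, which is precisely $q\mid\prod_k d_k^{e_k}$. For (ii), the requirement that every $U_k/d_k$ be an integer is equivalent to $u_{k,p_j}\ge m^{(j,l_j)}_k$ for all $k,j$ (the absence of other prime factors in $d_k$ makes the remaining primes impose no constraint); the implication $(\Leftarrow)$ then follows by summing these inequalities against the $e_k$, while for $(\Rightarrow)$ I would, for each $j$, pick $l_j$ with $(u_{1,p_j},\dots,u_{N,p_j})\ge m^{(j,l_j)}$ and read off the corresponding $d\in\mathcal M$.

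The main technical input is Dickson's lemma, which delivers the finiteness of $\mathcal M$; once that is accepted, the rest is routine bookkeeping across the finitely many prime divisors of $q$, and it is precisely the independence of exponents at distinct primes that lets one minimal element per prime be assembled into a single tuple $d$. For the final claim, when $N=1$ with $e_1\ge 1$ each $S_j$ is the half-line $\{x_1\ge\lceil q_{p_j}/e_1\rceil\}$, which has a unique minimal element, so the product over $j$ collapses to the single tuple $d_1=\prod_{j=1}^r p_j^{\lceil q_{p_j}/e_1\rceil}$, giving a one-element $\mathcal M$.
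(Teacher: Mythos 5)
Your proof is correct, but it takes a genuinely different route from the paper's. The paper works globally: it defines ${\cal S}_q$ as the set of tuples of non-zero integers $U$ with $q \mid \prod_k U_k^{e_k}$, orders it by coordinatewise divisibility (``domination''), takes ${\cal M}$ to be the minimal elements, and proves finiteness of ${\cal M}$ by hand, showing that any minimal tuple must satisfy $U_k=1$ whenever $e_k=0$ and $\prod_k U_k^{e_k}\le q^{1+e_M}$ (where $e_M=\max_k e_k$), via a descent argument: if some prime $p$ appeared with too large an exponent in $\prod_k U_k^{e_k}$, one could divide some $U_i$ by $p$ and stay in ${\cal S}_q$, contradicting minimality. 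You instead localize at each prime $p_j\mid q$, translate divisibility into the linear inequality $\sum_k e_k u_{k,p_j}\ge q_{p_j}$ on exponent vectors, invoke Dickson's lemma to get finitely many minimal elements of each upward-closed set $S_j\subset{\mathbb Z}_{\ge 0}^N$, and reassemble one minimal element per prime into a tuple $d$ supported only on the primes dividing $q$; the independence of exponents at distinct primes makes the assembly and both directions of (ii) go through, and your $N=1$ formula $d_1=\prod_j p_j^{\lceil q_{p_j}/e_1\rceil}$ agrees with the paper's $q^*$. The trade-off: the paper's argument is self-contained (no appeal to Dickson's lemma) and yields an explicit size bound on the members of ${\cal M}$, which matters because the lemma feeds into an algorithm in Theorem \ref{th:3monred} and one needs ${\cal M}$ to be computable; your argument is more modular and conceptually cleaner, and it is also effective in this setting since the minimal elements of each $S_j$ have explicitly bounded coordinates ($0$ where $e_k=0$, at most $\lceil q_{p_j}/e_k\rceil$ otherwise), though that effectivity deserves a sentence rather than a bare citation of Dickson's lemma. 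One cosmetic remark: your ${\cal M}$ may contain tuples dominated by other tuples in ${\cal M}$, unlike the paper's set of minimal elements, but the lemma does not require minimality, so this is harmless.
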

\begin{proof}
	Let ${\cal S}_q$ be the set of all $n$-tuples of non-zero integers $U=(U_1, \dots, U_{N})$ such that $\prod_{k=1}^{N} U_k^{e_k}$ is divisible by $q$. We say that $U=(U_1, \dots, U_{N})\in {\cal S}_q$ dominates $V=(V_1, \dots, V_{N})\in {\cal S}_q$ if for every $k=1,\dots,N$ ratio $U_k/V_k$ is an integer. We say that $U\in {\cal S}_q$ is minimal if $U$ dominates any $V\in {\cal S}_q$ only if $|V_k|=U_k$ for all $k$. Let ${\cal M}$ be the set of all minimal elements of ${\cal S}_q$. Because ${\cal M} \subset {\cal S}_q$, property (i) follows. Further, every $U \in {\cal S}_q$ dominates some $d=(d_1,\dots,d_N) \in {\cal M}$, hence $U_k/d_k$ are integers for all $k=1,\dots, N$. Conversely, if for some $d\in {\cal M}$ ratios $U_k/d_k$ are integers for all $k=1,\dots, N$, then  
	$$
	\prod_{k=1}^{N} U_k^{e_k} = \prod_{k=1}^{N} (U_k/d_k)^{e_k} \prod_{k=1}^{N} d_k^{e_k}.
	$$
	Because $\prod_{k=1}^{N} d_k^{e_k}$ is divisible by $q$, so is $\prod_{k=1}^{N} U_k^{e_k}$. Thus, $U \in {\cal S}_q$, and (ii) follows. 
	
	It is left to prove that ${\cal M}$ is a finite set. We claim that every $U\in {\cal M}$ satisfies the conditions
	\begin{equation}\label{eq:ucondab}
		\text{(a)} \quad U_k = 1 \quad \text{whenever} \quad e_k=0 \quad \text{and} \quad \text{(b)} \quad \prod_{k=1}^{N} U_k^{e_k} \leq q^{1+e_M},
	\end{equation}
	where $e_M=\max\{e_1, \dots, e_{N}\}$. Indeed, if $e_k=0$ then $U$ dominates 
	$$
	V=(U_1, \dots, U_{k-1}, 1, U_{k+1}, \dots, U_{N}) \in {\cal S}_q,
	$$ 
	hence, by the definition of minimality, we must have $U_k=|1|=1$, and (a) follows. Let us now check (b). For any prime $p$ let $E_p$ and $q_p$ be the exponents with which $p$ enters the prime factorizations of $\prod_{k=1}^{N} U_k^{e_k}$ and $q$, respectively. If (b) fails, then there must exist a prime $p$ such that $E_p > (1+e_M)q_p$. In particular, $E_p>0$, hence there exists $i$ such that $U_i$ is divisible by $p$, which implies that $E_p \geq e_i$. Further, because $U\in {\cal S}_q$, ratio $R=(1/q)\prod_{k=1}^{N} U_k^{e_k}$ is an integer. Prime $p$ enters the prime factorization of $R$ in the exponent $E_p - q_p$. If $q_p=0$, then $E_p-q_p = E_p \geq e_i$. If $q_p\geq 1$, then  
	$$
	E_p - q_p > (1+e_M)q_p - q_p = e_M q_p \geq e_M \geq e_i.
	$$
	Hence, in any case $E_p-q_p\geq e_i$, which implies that $R/p^{e_i}$ is an integer. But
	$$
	\frac{R}{p^{e_i}} = \frac{(1/q)\prod_{k=1}^{N_1} U_k^{e_k}}{p^{e_i}} = (1/q)(U_i/p)^{e_i}\prod_{k\neq i} U_k^{e_k}, 
	$$
	hence $(U_1, \dots, U_{i-1}, U_i/p, U_{i+1}, \dots, U_{N_1}) \in {\cal S}_q$, which is a contradiction with the minimality of $U$. Hence, \eqref{eq:ucondab} holds for any minimal $U \in S_q$. However, there are only finitely many $n$-tuples of positive integers satisfying \eqref{eq:ucondab}, hence the set ${\cal M}$ of minimal $U \in S_q$ is a finite set.
	
	Finally, let $N=1$, and let $q = \pm\prod_{p \in {\cal P}_q} p^{q_p}$ be the prime factorization of $q$. Then $U_1^{e_1}$ is divisible by $q$ if and only if $U_1^{e_1}$ is divisible by $p^{q_p}$ for every $p\in {\cal P}_q$. Equivalently, $U_1$ is divisible by $p^{q^*_p}$ for every $p\in {\cal P}_q$, where $q^*_p$ is the smallest integer satisfying $e_1 q^*_p \geq q_p$. Then the statement of the Proposition is true with ${\cal M}=\{q^*\}$, where $q^* = \prod_{p \in {\cal P}_q} p^{q^*_p}$.
\end{proof}

Now we are ready to prove Theorem \ref{th:3monred}.

\begin{proof} \emph{of Theorem \ref{th:3monred}.}
	A general three-monomial equation is an equation of the form
	\begin{equation}\label{eq:gen3mon}
		a \prod_{i=1}^n x_i^{\alpha_i} + b \prod_{i=1}^n x_i^{\beta_i} + c \prod_{i=1}^n x_i^{\gamma_i} = 0,
	\end{equation}
	where $\alpha_i,\, \beta_i,\,\gamma_i$ are non-negative integers, and $a,b,c$ are non-zero integers. 
	Let $(x_1,\dots,x_n)$ be any non-trivial solution to \eqref{eq:gen3mon}.  
	For any prime $p$, let $a_p$, $b_p$, $c_p$, $z_{1p}, \dots, z_{np}$ be the exponents with which $p$ enters the prime factorizations of $a,b,c,x_1,\dots,x_n$, respectively. Then $p$ enters the prime factorizations of the monomials of \eqref{eq:gen3mon} with the exponents
	$$
	a_p + \sum_{i=1}^n \alpha_i z_{ip}, \quad b_p + \sum_{i=1}^n \beta_i z_{ip}, \quad c_p + \sum_{i=1}^n \gamma_i z_{ip},
	$$
	respectively. By Lemma \ref{le:main}, we must have either
	\begin{equation}\label{eq:gen3moncase1}
		a_p + \sum_{i=1}^n \alpha_i z_{ip} = b_p + \sum_{i=1}^n \beta_i z_{ip} \leq c_p + \sum_{i=1}^n \gamma_i z_{ip},
	\end{equation}
	or 
	\begin{equation}\label{eq:gen3moncase2}
		a_p + \sum_{i=1}^n \alpha_i z_{ip} = c_p + \sum_{i=1}^n \gamma_i z_{ip} < b_p + \sum_{i=1}^n \beta_i z_{ip},
	\end{equation}
	or 
	\begin{equation}\label{eq:gen3moncase3}
		b_p + \sum_{i=1}^n \beta_i z_{ip} = c_p + \sum_{i=1}^n \gamma_i z_{ip} < a_p + \sum_{i=1}^n \alpha_i z_{ip}.
	\end{equation}
	Let ${\cal P}_1$, ${\cal P}_2$, ${\cal P}_3$ be the sets of primes satisfying \eqref{eq:gen3moncase1}, \eqref{eq:gen3moncase2} and \eqref{eq:gen3moncase3}, respectively. Note that ${\cal P}_1$, ${\cal P}_2$, ${\cal P}_3$ form a partition of the set ${\cal P}$ of all primes.
	
	Let ${\cal A}$ be the set of prime divisors of $abc$. For every $p\not\in {\cal A}$, $a_p=b_p=c_p=0$, and \eqref{eq:gen3moncase1}-\eqref{eq:gen3moncase3} simplify to
	\begin{equation}\label{eq:3mon1cofcase1}
		\sum_{i=1}^n \alpha_i z_{ip} = \sum_{i=1}^n \beta_i z_{ip} \leq \sum_{i=1}^n \gamma_i z_{ip},
	\end{equation}
	\begin{equation}\label{eq:3mon1cofcase2}
		\sum_{i=1}^n \alpha_i z_{ip} = \sum_{i=1}^n \gamma_i z_{ip} < \sum_{i=1}^n \beta_i z_{ip},
	\end{equation}
	\begin{equation}\label{eq:3mon1cofcase3}
		\sum_{i=1}^n \beta_i z_{ip} = \sum_{i=1}^n \gamma_i z_{ip} < \sum_{i=1}^n \alpha_i z_{ip}.
	\end{equation} 
	
The equations in \eqref{eq:3mon1cofcase1}-\eqref{eq:3mon1cofcase3} are homogeneous linear equations in non-negative integer variables. The study of such equations goes back to at least 1903 in the paper of Elliott \cite{elliott1903on}. 
A cornerstone concept of this theory is the concept of minimal solutions. For any  $x=(x_1,\dots,x_n)$ and $y=(y_1,\dots,y_n)$, we write $x<y$ (and say that $y$ dominates $x$) if $x_i\leq y_i$ for $i=1,\dots,n$ with at least one inequality being strict. Any homogeneous linear equation has a solution $(z_1,\dots,z_n)=(0,\dots,0)$ which we call trivial, and all other solutions in non-negative integers are called non-trivial. A non-trivial solution $z=(z_1,\dots,z_n)$ is called minimal if there is no other non-trivial solution $z'=(z'_1,\dots,z'_n)$ such that $z' < z$. It is well-known \cite{lambert1987borne}, that any homogeneous linear equation has a finite number of minimal solutions, and there are algorithms for listing them all. 
	
	Let 
	$
	(e_{k1}, \dots, e_{kn}),
	$ 
	$
	k=1,\dots, N_1,
	$ 
	$
	(f_{k1}, \dots, f_{kn}),
	$ 
	$
	k=1,\dots, N_2,
	$ 
	and 
	$
	(g_{k1}, \dots, g_{kn}),
	$ 
	$
	k=1,\dots, N_3, 
	$
	be the complete lists of minimal solutions to the equations in \eqref{eq:3mon1cofcase1}, \eqref{eq:3mon1cofcase2} and \eqref{eq:3mon1cofcase3}, respectively.
	Then all non-negative integer solutions to the equation in \eqref{eq:3mon1cofcase1} are given by 
	$$
	z_{ip} = \sum_{k=1}^{N_1} u_{kp} e_{ki}, \quad i=1,2,\dots, n, \quad p \in {\cal P}_1,
	$$ 
	for non-negative integers $u_{1p}, \dots, u_{N_1p}$, see e.g. \cite{lambert1987borne}. The non-negative integer solutions to the equation in \eqref{eq:3mon1cofcase2} and \eqref{eq:3mon1cofcase3} can be written similarly.
	
	Let us now return to the equations in \eqref{eq:gen3moncase1}-\eqref{eq:gen3moncase3}. If $a_p\neq b_p$, then the equation in \eqref{eq:gen3moncase1} is an inhomogeneous linear equation in non-negative integers. As in the homogeneous case, a solution to such equation is called minimal if it does not dominate any other solution. The set of minimal solutions is finite and can be computed \cite{lambert1987borne}.
	Let $S_p^1$ be the set of minimal solutions to the equation in \eqref{eq:gen3moncase1} if $a_p\neq b_p$, and $S_p^1=\{0\}$ otherwise. Similarly, let $S_p^2$ and $S_p^3$ be the sets of minimal solutions to the equations in \eqref{eq:gen3moncase2} and \eqref{eq:gen3moncase3}, respectively, provided that they are inhomogeneous, and $S_p^2=\{0\}$ and $S_p^3=\{0\}$, respectively, in the homogeneous case. Then the general solutions to the equations in \eqref{eq:gen3moncase1}, \eqref{eq:gen3moncase2} and \eqref{eq:gen3moncase3}, are given by
	\begin{equation}\label{eq:gen3moncase1sol}
		z_{ip} = z_{ip}^0+\sum_{k=1}^{N_1} u_{kp} e_{ki}, \quad i=1,2,\dots, n, \quad z_p^0=(z_{1p}^0, \dots, z_{np}^0) \in S_p^1, \quad \quad p \in {\cal P}_1,
	\end{equation}
	\begin{equation}\label{eq:gen3moncase2sol}
		z_{ip} = z_{ip}^0+\sum_{k=1}^{N_2} u_{kp} f_{ki}, \quad i=1,2,\dots, n, \quad z_p^0=(z_{1p}^0, \dots, 	z_{np}^0) \in S_p^2, \quad \quad p \in {\cal P}_2,
	\end{equation}
	and
	\begin{equation}\label{eq:gen3moncase3sol}
		z_{ip} = z_{ip}^0+\sum_{k=1}^{N_3} u_{kp} g_{ki}, \quad i=1,2,\dots, n, \quad z_p^0=(z_{1p}^0, \dots, 	z_{np}^0) \in S_p^3, \quad \quad p \in {\cal P}_3,
	\end{equation}
	respectively, where $u_{kp}$ are arbitrary non-negative integers, see \cite{lambert1987borne}.	

	Substituting prime factorizations
	$$
	|a| = \prod_{p \in {\cal P}} p^{a_p}, \quad |b| = \prod_{p \in {\cal P}} p^{b_p}, \quad |c| = \prod_{p \in {\cal P}} p^{c_p}, \quad |x_i| = \prod_{p \in {\cal P}} p^{z_{ip}}, \quad i=1,\dots, n
	$$
	into \eqref{eq:gen3mon} results in
	$$
	\pm \prod_{p \in {\cal P}} p^{a_p+\sum_{i=1}^n \alpha_i z_{ip}}
	\pm \prod_{p \in {\cal P}} p^{b_p+\sum_{i=1}^n \beta_i z_{ip}}
	\pm \prod_{p \in {\cal P}} p^{c_p+\sum_{i=1}^n \gamma_i z_{ip}} = 0,
	$$
 	for some combination of signs. For every $p \in {\cal P}_1$, \eqref{eq:gen3moncase1} implies that $p^{a_p+\sum_{i=1}^n \alpha_i z_{ip}} = p^{b_p+\sum_{i=1}^n \beta_i z_{ip}}$ is a common factor of all three monomials in this equation. Similarly, $p^{a_p+\sum_{i=1}^n \alpha_i z_{ip}}=p^{c_p+\sum_{i=1}^n \gamma_i z_{ip}}$ is a common factor for every $p \in {\cal P}_2$ by \eqref{eq:gen3moncase2}, while $p^{b_p+\sum_{i=1}^n \beta_i z_{ip}}=p^{c_p+\sum_{i=1}^n \gamma_i z_{ip}}$ is a common factor for every $p \in {\cal P}_3$ by \eqref{eq:gen3moncase3}. Cancelling all these common factors results in
	\begin{equation}\label{eq:cancelled}
 	\pm \prod_{p \in {\cal P}_3} p^{a_p-b_p+\sum_{i=1}^n (\alpha_i-\beta_i) z_{ip}}
 	\pm \prod_{p \in {\cal P}_2} p^{b_p-c_p+\sum_{i=1}^n (\beta_i-\gamma_i) z_{ip}}
 	\pm \prod_{p \in {\cal P}_1} p^{c_p-a_p+\sum_{i=1}^n (\gamma_i-\alpha_i) z_{ip}} = 0.
	\end{equation}
 	Inequalities in \eqref{eq:gen3moncase1}-\eqref{eq:gen3moncase3} imply that all three terms in the last expression are integers. Next, substituting expressions \eqref{eq:gen3moncase1sol}-\eqref{eq:gen3moncase3sol} for $z_{ip}$ results in
 	$$
 	A \prod_{p \in {\cal P}_3} p^{\sum_{i=1}^n (\alpha_i-\beta_i)\sum_{k=1}^{N_3} u_{kp} g_{ki}} +
 	B \prod_{p \in {\cal P}_2} p^{\sum_{i=1}^n (\beta_i-\gamma_i)\sum_{k=1}^{N_2} u_{kp} f_{ki}} +
 	C \prod_{p \in {\cal P}_1} p^{\sum_{i=1}^n (\gamma_i-\alpha_i)\sum_{k=1}^{N_1} u_{kp} e_{ki}} = 0,
 	$$
 	where
 	\begin{equation}\label{eq:ABCdef}
 		\begin{split}
 			& A = \pm \prod_{p \in {\cal P}_3} p^{a_p-b_p+\sum_{i=1}^n (\alpha_i-\beta_i) z_{ip}^0}, \\
 			& B = \pm \prod_{p \in {\cal P}_2} p^{b_p-c_p+\sum_{i=1}^n (\beta_i-\gamma_i) z_{ip}^0}, \\
 			& C = \pm \prod_{p \in {\cal P}_1} p^{c_p-a_p+\sum_{i=1}^n (\gamma_i-\alpha_i) z_{ip}^0}.
 		\end{split}
 	\end{equation}
	The last equation can be written as 
 	$$
 	A \prod_{p \in {\cal P}_3} p^{\sum_{k=1}^{N_3}u_{kp}g_k} +
 	B \prod_{p \in {\cal P}_2} p^{\sum_{k=1}^{N_2}u_{kp}f_k} +
	C \prod_{p \in {\cal P}_1} p^{\sum_{k=1}^{N_1}u_{kp}e_k} = 0,
 	$$
 	where 
	\begin{equation}\label{eq:abcdef3mon}
	g_k = \sum_{i=1}^n (\alpha_i-\beta_i) g_{ki}, \quad f_k = \sum_{i=1}^n (\beta_i-\gamma_i) f_{ki}, \quad e_k = \sum_{i=1}^n (\gamma_i-\alpha_i) e_{ki},
	\end{equation}
	or equivalently as
	\begin{equation}\label{eq:3mon1cofred}
		A \prod_{k=1}^{N_3} W_k^{g_k} 
		+ B \prod_{k=1}^{N_2} V_k^{f_k} 
		+ C \prod_{k=1}^{N_1} U_k^{e_k} = 0,
	\end{equation}
	where
	\begin{equation}\label{eq:UVWdef}
	U_k = \prod_{p \in {\cal P}_1} p^{u_{kp}}, \quad V_k=\prod_{p \in {\cal P}_2} p^{u_{kp}}, \quad W_k=\prod_{p \in {\cal P}_3} p^{u_{kp}}.
	\end{equation}

    For every original equation \eqref{eq:gen3mon}, there are only finitely many possible values for coefficients $(A,B,C)$ defined in \eqref{eq:ABCdef}, and all such possible triples can be explicitly listed. Indeed, for every prime $p$ that is not a divisor of $abc$, we have $a_p=b_p=c_p=0$ and $z_{ip}^0=0$ for $i=1,\dots,n$, hence all such primes contribute factor $1$ to the products in \eqref{eq:ABCdef}. Hence, the products are actually over prime factors of $abc$. There are finitely many such primes, and therefore finitely many ways how they can split into ${\cal P}_1, {\cal P}_2$ and ${\cal P}_3$. Also, for each $p$ there are only finitely many minimal solutions $(z_{1p}^0, \dots, z_{np}^0)$ to the equations in \eqref{eq:gen3moncase1}-\eqref{eq:gen3moncase3}. Finally, there are $8$ combinations of signs. Hence, there are finitely many possible triples of $(A,B,C)$, and we have reduced the original equation \eqref{eq:gen3mon} to finitely many equations of the form \eqref{eq:3mon1cofred} with integer variables $W_k$, $V_k$ and $U_k$.

	We remark that exponents $g_k,f_k,e_k$ defined in \eqref{eq:abcdef3mon} may be positive or negative integers, and coefficients $A,B,C$ are rational numbers. However, the three terms in \eqref{eq:3mon1cofred} are the same as the three terms in \eqref{eq:cancelled}, and therefore must be integers.
	
	Now, given integers $e_1, \dots, e_{N_1}$, which integers $m$ can be represented in the form $m = C \prod_{k=1}^{N_1} U_k^{e_k}$? 
	In other words, for which integers $m$ is equation
	\begin{equation}\label{eq:reduced1}
		\prod_{k=1}^{N_1} U_k^{e_k} = \frac{m}{C}
	\end{equation}
	solvable in integers $U_1, \dots, U_{N_1}$? 
	Let us consider cases (i) all $e_k\leq 0$, (ii) not all $e_k$ are of the same sign, and (iii) all $e_k \geq 0$. 
	
	\begin{itemize}
		\item In case (i), \eqref{eq:reduced1} is equivalent to $\prod_{k=1}^{N_1} U_k^{|e_k|} = \frac{C}{m}$ and may be solvable in integers only if $C$ is an integer and $m$ is a divisor of $C$. Hence, there are only finitely many possible values of $m$. 
		\item 	In case (ii), Proposition \ref{prop:2monex} implies that \eqref{eq:reduced1} is solvable if and only if $\sqrt[d]{m/C}$ is a rational number, where $d = \text{gcd}(e_1, \dots, e_{N_1})$. Representing $C=s/q$ and $\sqrt[d]{m/C}=\sqrt[d]{qm/s}=U/V$ as irreducible fractions, we obtain
		$$
		V^d q m = U^d s.
		$$  
		Because $\text{gcd}(U,V)=1$, $V^d$ must be a divisor of $s$, hence there are only finitely many possible values for $V$, say $v_1, \dots, v_K$. If $V=v_i$, then $m=\frac{U^d s}{v_i^d q}$. For $m$ to be an integer, we must have $U^d$ to be divisible by $q$. By the $N=1$ case of Lemma \ref{le:ulemma}, this happens if and only if $U$ is divisible by $q^*$, where $q^*$ is the smallest positive integer such that $(q^*)^d$ is divisible by $q$. 	
		Then substitution $U=q^* U'$ results in 
		\begin{equation}\label{eq:caseiirepl}
			m=\frac{s}{v_i^d}\frac{(q^*)^d}{q}(U')^d.
		\end{equation} 
		We remark that $U'$ is an integer variable and $\frac{s}{v_i^d}\frac{(q^*)^d}{q}$ is an integer coefficient. From \eqref{eq:reduced1}, we may find original variables $U_1, \dots, U_{N_1}$ as functions of $U'$. Indeed, \eqref{eq:reduced1} reduces to
		$$
			\prod_{k=1}^{N_1} U_k^{e_k/d} = e\sqrt[d]{\frac{m}{C}} = e \frac{U}{v_i} = e\frac{q^* U'}{v_i}, 
		$$ 
		where $e=\pm 1$ if $d$ is even and $e=1$ if $d$ is odd. This equation can be rewritten as a two-monomial equation \eqref{eq:gen2monred}, and its solution is given by \eqref{eq:gen2monredsol}. 
		\item In case (iii), if $C=s/q$ is an irreducible fraction, then $m=(s/q)\prod_{k=1}^{N_1} U_k^{e_k}$ is a non-zero integer if and only if $\prod_{k=1}^{N_1} U_k^{e_k}$ is a non-zero integer divisible by $q$. By Lemma \ref{le:ulemma}, there is a finite set ${\cal M}$  of $N_1$-tuples $d=(d_1, \dots, d_{N_1})$ of positive integers such that $(1/q)\prod_{k=1}^{N_1} U_k^{e_k}$ is an integer if and only if there exists $d \in {\cal M}$ such that $U'_k=U_k/d_k$ are integers for all $k=1,\dots, N_1$. Then
		\begin{equation}\label{eq:caseiiirepl}
			m = \frac{s\prod_{k=1}^{N_1} U_k^{e_k}}{q} = s \frac{\prod_{k=1}^{N_1} d_k^{e_k}}{q} \prod_{k=1}^{N_1} (U'_k)^{e_k} \quad \text{for some} \quad d \in {\cal M}.
		\end{equation} 
		Because $\prod_{k=1}^{N_1} d_k^{e_k}$ is divisible by $q$ by Lemma \ref{le:ulemma} (i), this expression involves integer variables $U'_k$ and integer coefficient.
	\end{itemize}
	
	Now, let us replace the term $C \prod_{k=1}^{N_1} U_k^{e_k}$ in \eqref{eq:3mon1cofred} by some divisor of $C$ in case (i), by \eqref{eq:caseiirepl} for some $i$ in case (ii), and by \eqref{eq:caseiiirepl} for some $d \in {\cal M}$ in case (iii). Let us do the same replacements for the other two terms in \eqref{eq:3mon1cofred}. This transforms \eqref{eq:3mon1cofred} into a finite number of three-monomial equations with integer variables and coefficients, non-negative exponents, and independent monomials.

%

	We remark that all prime factors of $U_k$, $V_k$ and $W_k$ belong to the disjoint classes ${\cal P}_1$, ${\cal P}_2$, and ${\cal P}_3$, respectively, hence it is sufficient to find only primitive solutions to these equations. Once all these equations are solved, it is easy to find integer solutions to \eqref{eq:3mon1cofred}, and then absolute values of the solutions to the original equation \eqref{eq:gen3mon} are
	$$
	|x_i| = \prod_{p \in {\cal P}} p^{z_{ip}} = \prod_{p \in {\cal P}_1} p^{z_{ip}^0+\sum_{k=1}^{N_1}u_{kp}e_{ki}} \prod_{p \in {\cal P}_2} p^{z_{ip}^0+\sum_{k=1}^{N_1}u_{kp}f_{ki}} \prod_{p \in {\cal P}_3} p^{z_{ip}^0+\sum_{k=1}^{N_1}u_{kp}g_{ki}} = 
	$$
	$$
	= \left(\prod_{p \in {\cal P}_1} p^{z_{ip}^0}\prod_{p \in {\cal P}_2} p^{z_{ip}^0}\prod_{p \in {\cal P}_3} p^{z_{ip}^0}\right) \prod_{k=1}^{N_1} U_k^{e_{ki}} \prod_{k=1}^{N_2} V_k^{f_{ki}} \prod_{k=1}^{N_3} W_k^{g_{ki}}, \quad i=1,\dots, n,
	$$
	where we have used \eqref{eq:gen3moncase1sol}-\eqref{eq:gen3moncase3sol} and \eqref{eq:UVWdef}. Once $|x_i|$ are found, it is easy to find the signs of the variables and finish the solution of \eqref{eq:gen3mon}.	
%
%
%
\end{proof}

In many examples, the resulting equations with independent monomials are trivial, hence the reduction in the proof of Theorem \ref{th:3monred} instantly solves the original equation. As an illustration, assume that $\alpha_i,\, \beta_i,\,\gamma_i$ in \eqref{eq:gen3mon} are such that both systems \eqref{eq:gen3monsyst1} and \eqref{eq:gen3monsyst2} are solvable in non-negative integers, so that \eqref{eq:gen3mon} can be instantly solved by Proposition \ref{prop:3monformula}. Then the first equation in \eqref{eq:gen3monsyst1} implies that 
$$
z_i = \sum_{k=1}^{N_1} u_k e_{ki}, \quad i=1,\dots, n, 
$$
where $u_1, \dots, u_{N_1}$ are some non-negative integers, and 
$
(e_{k1}, \dots, e_{kn}),
$ 
$
k=1,\dots, N_1,
$ 
are the minimal solutions to this equation. 
Then, by \eqref{eq:gen3monsyst1},
\begin{equation}\label{eq:3monsum1}
	1 = \sum_{i=1}^n(\gamma_i-\alpha_i) z_i = \sum_{i=1}^n(\gamma_i-\alpha_i) \sum_{k=1}^{N_1} u_k e_{ki} = \sum_{k=1}^{N_1} u_k \sum_{i=1}^n (\gamma_i-\alpha_i) e_{ki} = \sum_{k=1}^{N_1} u_k e_k, 
\end{equation}
where the last equality follows from \eqref{eq:abcdef3mon}. By similar argument, the solvability of \eqref{eq:gen3monsyst2} implies that
\begin{equation}\label{eq:3monsumm1}
	-1 = \sum_{k=1}^{N_1} u'_k e_k
\end{equation} 
for some non-negative integers $u'_k$. This implies that not all $e_k$ are of the same sign, and $d=\text{gcd}(e_1,\dots,e_{N_1})=1$, hence the algorithm in the proof of Theorem \ref{th:3monred} replaces the term $C \prod_{k=1}^{N_1} U_k^{e_k}$ in \eqref{eq:3mon1cofred} by $C' (U')^{d} = C' U'$ for some integer coefficient $C'$ and integer variable $U'$. In conclusion, the original equation is reduced to a finite number of trivial equations of the form \eqref{eq:sepvara} with $U'$ in place of $x_i$. In other words, Proposition \ref{prop:3monformula} corresponds to the trivial case of Theorem \ref{th:3monred} when the reduction is to equations of the form \eqref{eq:sepvara}.

Now assume that system \eqref{eq:gen3monsyst1} is solvable in non-negative integers, while system  \eqref{eq:gen3monsyst2} is not. In this case, \eqref{eq:3monsum1} is true but \eqref{eq:3monsumm1} is not. By \eqref{eq:3monsum1}, $\text{gcd}(e_1,\dots,e_{N_1})=1$, hence \eqref{eq:3monsumm1} may fail only if all $e_k$ are non-negative. But then \eqref{eq:3monsum1} implies that $e_j=1$ for some $1\leq j \leq n$. If $e_i=0$ for all $i\neq j$, then term $C \prod_{k=1}^{N_1} U_k^{e_k}$ in \eqref{eq:3mon1cofred} reduces to $C U_j$, hence the reduced equation is of the form \eqref{eq:sepvara}. If, conversely, $e_i>0$ for some $i\neq j$, then term $C \prod_{k=1}^{N_1} U_k^{e_k}$ in \eqref{eq:3mon1cofred} contains multiplier $U_j U_i^{e_i}$, hence the whole equation is of the form \eqref{eq:x1kx2}, and can be solved as explained in Section \ref{sec:solrepr}. We have just proved the following result.

\begin{proposition}\label{prop:3monsuffcond}
	There is an algorithm for describing all integer solutions to any three-monomial Diophantine equation that can be written in the form \eqref{eq:gen3mon2} such that system \eqref{eq:gen3monsyst1} is solvable in non-negative integers. 
\end{proposition}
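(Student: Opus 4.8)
The plan is to run the reduction of Theorem \ref{th:3monred} and then exploit the hypothesis on system \eqref{eq:gen3monsyst1} to show that each of the resulting independent-monomial equations already has one of the two trivially solvable shapes \eqref{eq:sepvara} or \eqref{eq:x1kx2}. First I would apply Theorem \ref{th:3monred} verbatim, arriving at finitely many equations of the form \eqref{eq:3mon1cofred}, namely $A\prod_{k=1}^{N_3} W_k^{g_k} + B\prod_{k=1}^{N_2} V_k^{f_k} + C\prod_{k=1}^{N_1} U_k^{e_k}=0$, where the exponents $e_k$ are given by \eqref{eq:abcdef3mon} in terms of the minimal solutions $(e_{k1},\dots,e_{kn})$ of the homogeneous equation $\sum_i\alpha_i z_i=\sum_i\beta_i z_i$ attached to the class ${\cal P}_1$. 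The whole argument then turns on the single family of exponents $e_1,\dots,e_{N_1}$.

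Next I would translate the solvability of \eqref{eq:gen3monsyst1} into an arithmetic statement about these $e_k$. A non-negative solution $z$ of \eqref{eq:gen3monsyst1} satisfies $\sum_i\alpha_i z_i=\sum_i\beta_i z_i$, so $z$ is a non-negative combination $z_i=\sum_k u_k e_{ki}$ of the minimal solutions; substituting into $\sum_i(\gamma_i-\alpha_i)z_i=1$ yields \eqref{eq:3monsum1}, i.e. $\sum_k u_k e_k=1$ with all $u_k\ge 0$. In particular $\gcd(e_1,\dots,e_{N_1})=1$. This reduction is purely elementary and I expect no difficulty here.

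The heart of the matter is a short dichotomy on the sign pattern of the $e_k$, which is where the main subtlety lies. If system \eqref{eq:gen3monsyst2} is also solvable, the symmetric computation produces \eqref{eq:3monsumm1}, so $-1$ is a non-negative combination of the $e_k$ as well; then the $e_k$ are not all of the same sign, $d=\gcd(e_k)=1$, and case (ii) of the reduction inside Theorem \ref{th:3monred} (via \eqref{eq:caseiirepl} with $d=1$) replaces $C\prod_k U_k^{e_k}$ by a single linear term $C'U'$, turning \eqref{eq:3mon1cofred} into the trivially solvable form \eqref{eq:sepvara}; equivalently, Proposition \ref{prop:3monformula} applies directly. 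If instead \eqref{eq:gen3monsyst2} is not solvable, then $-1$ is not a non-negative combination of the $e_k$. Since $\sum_k u_k e_k=1$ already guarantees a positive entry, the presence of any negative entry would, together with $\gcd(e_k)=1$, make the semigroup of non-negative combinations a subgroup equal to all of ${\mathbb Z}$ and hence represent $-1$; so all $e_k\ge 0$. Then $\sum_k u_k e_k=1$ is a sum of non-negative integers equal to $1$, forcing some $e_j=1$.

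Finally I would split this last case exactly as the resulting monomial dictates. If $e_i=0$ for every $i\neq j$, the term is $CU_j$ and \eqref{eq:3mon1cofred} is again of the form \eqref{eq:sepvara}. If $e_i>0$ for some $i\neq j$, the term carries the factor $U_j U_i^{e_i}$, so the whole equation has the shape \eqref{eq:x1kx2}, with $U_i$ in the role of $x_1$, exponent $k=e_i\ge 1$, and $U_j$ in the role of $x_2$, and is solved by the divisor-set method of Section \ref{sec:solrepr}. In every branch the equation collapses to an already-solvable family, which establishes the algorithm. The step I expect to require the most care is making the sign-pattern dichotomy precise: that non-solvability of \eqref{eq:gen3monsyst2} is exactly the failure of $-1$ to be a non-negative combination of the $e_k$, and that this, under $\gcd(e_k)=1$, can occur only when all $e_k\ge 0$.
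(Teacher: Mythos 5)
Your proposal is correct and follows essentially the same route as the paper: run the reduction of Theorem \ref{th:3monred}, use solvability of \eqref{eq:gen3monsyst1} to get \eqref{eq:3monsum1} and hence $\gcd(e_1,\dots,e_{N_1})=1$, then split on whether \eqref{eq:gen3monsyst2} is solvable, landing in form \eqref{eq:sepvara} (via case (ii) of the reduction with $d=1$, or Proposition \ref{prop:3monformula}) in the first branch, and in forms \eqref{eq:sepvara} or \eqref{eq:x1kx2} (after concluding all $e_k\ge 0$ and some $e_j=1$) in the second. Your only addition is spelling out, via the semigroup-of-nonnegative-combinations argument, why failure of \eqref{eq:3monsumm1} under $\gcd=1$ forces all $e_k\ge 0$, a step the paper states tersely but identically.
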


\subsection{Examples}

\subsubsection{Some simple examples}

In this section, we give simple examples of three-monomial equations for which 
\begin{itemize}
	\item[(a)] Proposition \ref{prop:3monformula} is applicable;
	\item[(b)] Proposition \ref{prop:3monformula} is not applicable, but Proposition \ref{prop:3monsuffcond} is;
	\item[(c)] Proposition \ref{prop:3monsuffcond} is not applicable, but the algorithm in the proof of Theorem \ref{th:3monred} can still solve the equation completely.
\end{itemize}

Obviously, for each three-monomial equation, there are three ways to write it in the form \eqref{eq:gen3mon2}. Proposition \ref{prop:3monsuffcond} states that if system \eqref{eq:gen3monsyst1} is solvable for at least one of these three ways, then the algorithm works. 
As a simple example, equation 
\begin{equation}\label{eq:x3my2zmzinitial}
	x^3-y^2z-z=0
\end{equation}
can be written in the form \eqref{eq:gen3mon2} as
$$
x^3-y^2z=z.
$$
If we rename variables as $x=x_1$, $y=x_2$ and $z=x_3$, this is \eqref{eq:gen3mon2} with 
$$
(\alpha_1,\alpha_2,\alpha_3,\beta_1,\beta_2,\beta_3,\gamma_1,\gamma_2,\gamma_3)=(3,0,0,0,2,1,0,0,1).
$$ 
Then system \eqref{eq:gen3monsyst1} 
$$
3z_1 = 2z_2+z_3 = z_3 - 1 
$$ 
has no solutions in non-negative integers. However, the same equation \eqref{eq:x3my2zmzinitial} can also be written in the form \eqref{eq:gen3mon2} as 
$$
y^2z+z=x^3.
$$
In this case, $(\alpha_1,\alpha_2,\alpha_3,\beta_1,\beta_2,\beta_3,\gamma_1,\gamma_2,\gamma_3)=(0,2,1,0,0,1,3,0,0)$. Then systems \eqref{eq:gen3monsyst1} and \eqref{eq:gen3monsyst2} are
$$
2z_2+z_3 = z_3 = 3z_1 - 1 \quad \text{and} \quad 2t_2+t_3 = t_3 = 3t_1 + 1,
$$ 
respectively. These systems are solvable in non-negative integers, hence Proposition \ref{prop:3monformula} is applicable. For example, we may take $(z_1,z_2,z_3)=(1,0,2)$ and $(t_1,t_2,t_3)=(0,0,1)$. Substituting this into the general formula \eqref{eq:gen3mon2sol}, we obtain 
$$
(x,y,z) = \left(\frac{u_2^2u_3+u_3}{w}u_1, \, u_2, \, \frac{(u_2^2u_3+u_3)^2u_1^3}{w^3}u_3\right), \,\, u_i \in {\mathbb Z}, \,\, w \in D_1(u_2^2u_3+u_3) \cap D_1(u_1^3).
$$

As a next example, let us consider the equation
\begin{equation}\label{eq:x3my2zmy}
	x^3-y^2z-y=0,
\end{equation}
which can be written in the form \eqref{eq:gen3mon2} as $y^2z+y=x^3$, or $x^3-y^2z=y$, or $x^3-y=y^2z$. 
Up to the names of the variables, this is \eqref{eq:gen3mon2} with $(\alpha_1,\alpha_2,\alpha_3,\beta_1,\beta_2,\beta_3,\gamma_1,\gamma_2,\gamma_3)$ equal to $(0,2,1,0,1,0,3,0,0)$,  $(3,0,0,0,2,1,0,1,0)$ and $(3,0,0,0,1,0,0,2,1)$, respectively. The corresponding systems \eqref{eq:gen3monsyst1} and \eqref{eq:gen3monsyst2} are
$$
2z_2+z_3 = z_2 = 3 z_1 - 1 \quad \text{and} \quad 2t_2+t_3 = t_2 = 3 t_1 + 1,
$$ 
$$
3 z_1 = 2z_2+z_3 = z_2 - 1 \quad \text{and} \quad 3 t_1 = 2t_2+t_3 = t_2 + 1,
$$ 
and
$$
3 z_1 = z_2 = 2z_2+z_3 - 1 \quad \text{and} \quad 3 t_1 = t_2 = 2t_2+t_3 + 1,
$$
respectively. It is easy to see that only the last system in $z_i$ is solvable in non-negative integers, with, for example $(z_1,z_2,z_3)=(0,0,1)$ being a solution. However, the corresponding system $3 t_1 = t_2 = 2t_2+t_3 + 1$ has no solutions in non-negative integers. Hence, Proposition \ref{prop:3monformula} is not applicable to equation \eqref{eq:x3my2zmy}. However, Proposition \ref{prop:3monsuffcond} is still applicable, and it guarantees that \eqref{eq:x3my2zmy} can be solved by reduction to equation(s) of the form \eqref{eq:x1kx2}. And indeed, we can rewrite \eqref{eq:x3my2zmy} as $y(yz+1)=x^3$. Integers $y$ and $yz+1$ are coprime, and their product can be a perfect cube only if they both are perfect cubes. Then $y=v^3$ and $yz+1=u^3$ for some integers $u$ and $v$. This implies that
\begin{equation}\label{eq:x3my2zmyred}
	v^3 z + 1 = u^3.
\end{equation} 
This equation is of the form \eqref{eq:x1kx2} and it is easy to solve: just let $u \in {\mathbb Z}$ be arbitrary, let $v$ be any element of set $D_3(u^3-1)$, and then express $z$ as $z=\frac{u^3-1}{v^3}$. Then $x=\sqrt[3]{(yz+1)y}=\sqrt[3]{u^3v^3}=uv$, so the final answer is
$$
(x,y,z) = \left(uv, \, v^3, \, \frac{u^3-1}{v^3}\right), \quad u \in {\mathbb Z}, \quad v \in D_3(u^3-1).
$$    

Our next example is the equation
\begin{equation}\label{eq:xpx2ymyz2}
	x+x^2y-yz^2=0.
\end{equation}
It can be written in the form \eqref{eq:gen3mon2} as $x+x^2y=yz^2$, or $x-yz^2=-x^2y$, or $x^2y-yz^2=-x$. Up to the names of the variables, this is \eqref{eq:gen3mon2} with $(\alpha_1,\alpha_2,\alpha_3,\beta_1,\beta_2,\beta_3,\gamma_1,\gamma_2,\gamma_3)$ equal to  $(1,0,0,2,1,0,0,1,2)$, $(1,0,0,0,1,2,2,1,0)$,  and $(2,1,0,0,1,2,1,0,0)$, respectively. The corresponding systems \eqref{eq:gen3monsyst1} are 
$$
z_1 = 2z_1+z_2 = z_2+2z_3 - 1, 
$$
$$
z_1 = z_2+2z_3 = 2z_1+z_2 - 1,
$$
and
$$
2z_1+z_2 = z_2+2 z_3 = z_1 -1,
$$ 
respectively. It is easy to see that none of these systems have a solution in non-negative integers, hence Proposition \ref{prop:3monsuffcond} is not applicable to this equation. However, we can still use the reduction in the proof of Theorem \ref{th:3monred} to solve the equation. Let $(x,y,z)$ be any solution to \eqref{eq:xpx2ymyz2} with $xyz \neq 0$. For any prime $p$, let $x_p, y_p$ and $z_p$ be the powers with which $p$ enters the prime factorizations of $x,y,z$, respectively. Then $p$ enters the prime factorizations of the monomials of \eqref{eq:xpx2ymyz2} in the powers $x_p$, $2x_p+y_p$ and $y_p+2z_p$, respectively.
By Lemma \ref{le:main}, one of the following systems is true
$$
	x_p = 2x_p + y_p \leq y_p + 2z_p, \quad x_p = y_p + 2z_p < 2x_p + y_p, \quad 2x_p + y_p = y_p + 2z_p < x_p.
$$
Let ${\cal P}_1$, ${\cal P}_2$, ${\cal P}_3$ be the sets of primes satisfying each of these systems. Note that ${\cal P}_3$ is an empty set, because inequality $2x_p + y_p < x_p$ is impossible for non-negative $x_p, y_p$. 
Now, let 
$$
u_x = \prod_{p \in {\cal P}_1} p^{x_p}, \, v_x = \prod_{p \in {\cal P}_2} p^{x_p}, \, 
u_y = \prod_{p \in {\cal P}_1} p^{y_p}, \, v_y = \prod_{p \in {\cal P}_2} p^{y_p}, \, 
u_z = \prod_{p \in {\cal P}_1} p^{z_p}, \, v_z = \prod_{p \in {\cal P}_2} p^{z_p}.
$$
Then
$$
x = e_x u_x v_x, \quad y = e_y u_y v_y, \quad z = e_z u_z v_z,
$$
where each $e_x$, $e_y$, $e_z$ is either $1$ or $-1$. 
For primes in ${\cal P}_1$, equation $x_p = 2x_p + y_p$ implies that $x_p=y_p=0$, hence $u_x=u_y=1$, thus $x = e_x v_x$, $y = e_y v_y$ and $z = e_z u_z v_z$. Substituting this into \eqref{eq:xpx2ymyz2}, we obtain 
\begin{equation}\label{eq:xpx2ymyz2red}
	e_x v_x + e_y v_x^2 v_y - e_y v_y u_z^2 v_z^2 = 0.
\end{equation}
For primes in ${\cal P}_2$, we have $x_p = y_p+2z_p$, thus 
$$
v_x = \prod_{p \in {\cal P}_2} p^{x_p} = \prod_{p \in {\cal P}_2} p^{y_p+2z_p} = \prod_{p \in {\cal P}_2} p^{y_p} \left(\prod_{p \in {\cal P}_2} p^{z_p}\right)^2 = v_y v_z^2.
$$
After dividing both parts of \eqref{eq:xpx2ymyz2red} by $v_x = v_y v_z^2$, we obtain
$$
e_x + e_y v_y^2 v_z^2 - e_y u_z^2 = 0,
$$
which is an equation with independent monomials in variables $v_y$, $v_z$, and $u_z$. After multiplying it by $e_x$, we can rewrite it as 
\begin{equation}\label{eq:xpx2ymyz2redf}
	1 + e v_y^2 v_z^2 - e u_z^2 = 0,
\end{equation}
where $e=e_x e_y$.

We followed the steps in the proof of Theorem \ref{th:3monred} with the only aim to illustrate the general method. For individual equations, the reduction can often be performed much easier by equation-specific arguments. For example, for equation \eqref{eq:xpx2ymyz2}, we can note that $x=y(z^2-x^2)$, hence $x=yt$ for integer $t=z^2-x^2=z^2-(yt)^2$, or equivalently $t(1+y^2t)=z^2$. Because factors $t$ and $1+y^2t$ are coprime, their product can be a perfect square only if $t = eu^2$ and $1+y^2t = ev^2$, where $u,v$ are some integers and $e$ is either $1$ or $-1$. Then $1+y^2(eu^2) = ev^2$, or
$$
1 + ey^2 u^2 - e v^2 = 0.
$$
Up to the names of the variables, this is exactly the equation \eqref{eq:xpx2ymyz2redf}. In this example, it can be easily solved by noticing that $(yu)^2$ and $v^2$ must be consecutive perfect squares, but the only consecutive perfect squares are $0$ and $1$, hence either $yu=0$ or $v=0$. 
If $yu=0$, then $x=yt=y(eu^2)=0$. If $v=0$, then $e=-1$, $|y|=|u|=1$, $t=eu^2=-1$, and $x=yt=-y=-(\pm 1)$. In conclusion, all integer solutions to \eqref{eq:xpx2ymyz2} are given by  
$$
	(x,y,z) = (0,0,w), (0,w,0), (-1,1,0) \quad \text{and} \quad (1,-1,0), \quad w \in {\mathbb Z}.
$$

\subsubsection{Generalized Fermat equations}

Our next example is the equation 
\begin{equation}\label{eq:x2py3mz5}
	x^2 + y^3 = z^5.
\end{equation}
This equation is known as ``the icosahedral case'' of the equation $x^n+y^m=z^k$. For such equations, it is an active research area to determine \emph{primitive} solutions, that is, ones with $\text{gcd}(x,y,z)=1$. Non-primitive solutions are usually excluded from the consideration on the basis that it is trivial to construct infinite families of such solutions. In 2005, Edwards \cite{edwards2005platonic} described all primitive solutions to \eqref{eq:x2py3mz5}. Up to changing $x$ into $-x$, there are exactly $27$ distinct parametrizations of such solutions.

We will describe \emph{all} integer solutions to \eqref{eq:x2py3mz5} using Proposition \ref{prop:3monformula}. The solutions with $xyz=0$ are easy to describe, so we may assume that $xyz\neq 0$. Equation \eqref{eq:x2py3mz5} is of the form \eqref{eq:gen3mon2} with $x_1=x$, $x_2=y$, $x_3=z$,  $a=b=c=1$, $\alpha_1=2$, $\beta_2=3$, $\gamma_3=5$, and $\alpha_2=\alpha_3=\beta_1=\beta_3=\gamma_1=\gamma_2=0$. Systems \eqref{eq:gen3monsyst1} and \eqref{eq:gen3monsyst2} reduce to $2z_1=3z_2=5z_3-1$ and $2t_1=3t_2=5t_3+1$. Both systems are solvable in non-negative integers, with the smallest solutions being $(z_1,z_2,z_3)=(12,8,5)$ and $(t_1,t_2,t_3)=(3,2,1)$, respectively. Hence, \eqref{eq:gen3mon2sol} reduces to
$$
x = \frac{\left(u_1^2+u_2^3\right)^{12}u_3^{15}}{w^{15}} u_1, \quad y = \frac{\left(u_1^2+u_2^3\right)^{8}u_3^{10}}{w^{10}} u_2, \quad z = \frac{\left(u_1^2+u_2^3\right)^{5}u_3^5}{w^6} u_3,
$$
where
$$
u_1, u_2, u_3 \in {\mathbb Z}, \quad 
w \in D_1\left(u_1^2+u_2^3\right) \cap D_1\left(u_3^5\right).
$$

In general, Proposition \ref{prop:3monformula} is applicable to the equation 
\begin{equation}\label{eq:genFermatcof1}
	x^n + y^m = z^k
\end{equation}
if and only if exponents $n,m,k$ satisfy
$$
\min\left({\text{gcd}(nm,k),\text{gcd}(nk,m),\text{gcd}(mk,n)}\right)=1,
$$
that is, there exists an exponent coprime with the other two. If $d=\text{gcd}(n,m,k)\geq 3$, then \eqref{eq:genFermatcof1} has no solutions with $xyz\neq 0$ by FLT. The case $d=2$ requires more care, but the most problematic case is when $n=ab$, $m=ac$, $k=bc$ for some distinct pairwise coprime positive integers $a,b,c$. In this case, Theorem \ref{th:3monred} reduces \eqref{eq:genFermatcof1} to the problem of finding primitive solutions to the same equation. The latter solutions are known in the case $(a,b,c)=(1,2,t)$ and its permutations, see \cite{merel1997winding}. All other cases would follow from Tijdeman-Zagier conjecture, which predicts that \eqref{eq:genFermatcof1} has no primitive solutions with $xyz\neq 0$ provided that $\min(n,m,k)\geq 3$. This conjecture is very difficult in general, and a million-dollars prize is offered for its resolution \cite{beal1997generalization}. In light of the above discussion, it is interesting to investigate whether Tijdeman-Zagier conjecture becomes any easier in the special case $(n,m,k)=(ab,ac,bc)$ for pairwise coprime $a,b,c$. 
In this case, \eqref{eq:genFermatcof1} is equivalent to $1+(y^c/x^b)^a=(z^c/x^a)^b$, hence the question reduces to investigating rational solutions $(X,Y)$ to the equation 
$$
X^b - Y^a = 1.
$$
We remark that the famous Catalan's conjecture proved by Mih\u{a}ilescu \cite{MR2076124} in 2004 states that the last equation has no solutions in integers $X,Y,a,b \geq 2$ other than $3^2-2^3=1$.
However, allowing $X,Y$ to be rational makes the problem more difficult. See \cite[Theorem 12.4]{MR891406} and \cite[Theorem 5]{mihailescu2007cylcotomic} for some partial results in this direction.

\subsubsection{Cyclic three-monomial equations}

Our next example is the resolution of all three-monomial equations obeying certain symmetry. Let us call equation $P(x_1, x_2, \dots, x_n)=0$ \emph{cyclic} if 
$$
P(x_1, x_2, \dots, x_n) = P(x_2, \dots, x_n, x_1).
$$
It is easy to see that the only irreducible cyclic equations with three monomials are (i) three-monomial equations in one variable, (ii) equations of the form $ax^n + bx^my^m + ay^n=0$ for some integers $a,b$ and non-negative integers $n,m$, and (iii) equations of the form 
\begin{equation}\label{eq:shiftsymm}
x^a y^b + y^a z^b + z^a x^b = 0,
\end{equation}  
for some non-negative integers $a,b$.
Equations in case (i) are trivial, ones in case (ii) are covered in Section \ref{sec:3mon2var}, so it is left to solve \eqref{eq:shiftsymm}. If $d=\text{gcd}(a,b)=2$, then all monomials in \eqref{eq:shiftsymm} are non-negative, hence the only solution to \eqref{eq:shiftsymm} is $x=y=z=0$. If $d\geq 3$, then \eqref{eq:shiftsymm} has no solutions with $xyz\neq 0$ by FLT \cite{wiles1995modular}. Hence, we may assume that $a,b$ are coprime. 

For any prime $p$, let $x_p$, $y_p$ and $z_p$ be the exponents with which $p$ enters the prime factorizations of $x$, $y$ and $z$, respectively. Then $p$ enters the prime factorizations of the monomials in \eqref{eq:shiftsymm} with the exponents $ax_p+by_p$, $ay_p+bz_p$ and $az_p+bx_p$. Hence, by Lemma \ref{le:main}, we have either (i) $ax_p+by_p=ay_p+bz_p \leq az_p+bx_p$, or (ii) $ax_p+by_p=az_p+bx_p < ay_p+bz_p$, or (iii) $ay_p+bz_p=az_p+bx_p < ax_p+by_p$. In case (i), we have $a(x_p-y_p)=b(z_p-y_p)$. Because $a$ and $b$ are coprime, this implies that $x_p-y_p=k_p b$ and $z_p-y_p=k_p a$ for some integer $k_p$. Then $z_p-x_p=k_p(a-b)$.  
After dividing all monomials in \eqref{eq:shiftsymm} by $p^{ax_p+by_p}=p^{ay_p+bz_p}$, we obtain that $p$ enters the prime factorization of the third monomial only, and the corresponding exponent is 
$$
(az_p+bx_p)-(ax_p+by_p) = a(z_p-x_p)+b(x_p-y_p)=
k_p(a^2-ab+b^2).
$$
The cases (ii) and (iii) are similar. In conclusion, after cancelling all common factors of the monomials of \eqref{eq:shiftsymm}, we obtain three monomials, and each of them is the $m$-th power of an integer, where $m=a^2-ab+b^2$. If $m \geq 3$, then $xyz=0$ by FLT. If $m=a^2-ab+b^2 < 3$, then $(a,b)=(1,1)$ or $(1,0)$ or $(0,1)$. In these cases equation \eqref{eq:shiftsymm} is easy to solve.

\subsubsection{Three-monomial equations of small degree}

It is natural to classify the equations by degree. Let us say that two equations belong to the same family if they differ only by coefficients. It is obvious that, up to the names of the variables, there is only a finite number of families of equations with given degree and number of monomials. The only families of quadratic three-monomial equations not solvable by Proposition \ref{prop:3monsuffcond} are $ax^2+by^2+c=0$ and  $ax^2+by^2+cz^2=0$. The first family is essentially the general Pell's equation, whose solution methods are well-known, see e.g. \cite[Section 10.5]{MR4368213}. The second family has been studied by Legendre, see \cite[Section 10.7]{MR4368213} for a modern treatment. In conclusion, all quadratic three-monomial equations are easy, and the first interesting case is cubic equations.

\begin{table}
\footnotesize
\begin{center}
\begin{tabular}{ | c | c || c|c|}
	\hline
	\textbf{Equation \eqref{eq:gen3mon2}} & \textbf{Solution to \eqref{eq:gen3monsyst1} } &  \textbf{Equation \eqref{eq:gen3mon2}} & \textbf{Solution to \eqref{eq:gen3monsyst1} }  \\
	\hline \hline
	$ax^2y+by=cz^2$&$(0,1,1)$& $ax+bz^2=cx^2y$&$(0,1,0)$\\\hline
	$ax^3+by=cyz$&$(0,0,1)$& $a t x + b t y = c x y z$&$(0,0,1,0)$\\\hline
	$a x^2 + b x y z = c t y$&$(0,0,0,1)$& $ a x^2 + by^2 = c x y z$&$(0,0,1)$\\\hline
	$a x^2 y + b y z = c t z$&$(0,0,0,1)$& $a x^2 y + b x z = c t z$&$(0,0,0,1)$\\\hline
	$a t y + b x^2 y = c x z$&$(0,0,1,0)$& $a x^2 y + b x z = c y z$&$(0,1,1)$\\\hline
	$a t^2 + b x^2 y = c y z$&$(0,0,1,0)$& $a t^2 + b x^2 y = c x z$&$(0,0,1,0)$\\\hline
	$a x^2 y + b y z = c z^2$&$(1,1,2)$& $a x^2 y + b x z = c z^2$&$(0,1,1)$\\\hline
	$a x^2 y + b x y = c z^2$&$(0,1,1)$& $a x^2 y + b x z = c y^2$&$(0,1,1)$\\\hline
	$a y^2+ b z^2 = cx^2 y$&$(1,1,1)$& $a x^2 y + b x^2 = c y z$&$(0,0,1)$\\\hline
	$a x^2+ bz^2 = cx^2 y$&$(0,1,0)$& $ax^3 + b t y = c y z$&$(0,0,1,0)$\\\hline
	$a x^3 + b x y = c y z$&$(0,0,1)$& $a x^3 + b x y = c z^2$&$(1,2,2)$\\\hline
	$a x^3 + b y^2 = c y z$&$(0,0,1)$& $a y^2+ b z^2 = c x^3$&$(1,1,1)$\\\hline
	$a y^2 z+ b z = c x^3$&$(1,0,2)$& $a x^3 + b y = c y^2 z$&$(0,0,1)$\\\hline
	$a x^2 y + b x z = c t y z$&$(0,0,0,1)$& $a x^2 y + b t y z = c z^2$&$(1,1,2,0)$\\\hline
	$a x^2 y + b t x z = c y z$&$(0,1,1,0)$& $a x^2 y + b t x z = c z^2$&$(0,1,1,0)$\\\hline
	$a x^2 y + b x y z = c t z$&$(0,0,0,1)$& $a t^2 + b x^2 y = c x y z$&$(0,0,1,0)$\\\hline
	$a x^2 y + b z^2 = c x y z$&$(0,2,1)$& $a x^2 y + b y z = c t^2 z$&$(0,1,0,1)$\\\hline
	$a x^2 y + b x z = c t^2 z$&$(1,0,1,1)$& $a x^2 y + b x z = c t z^2$&$(0,0,0,1)$\\\hline
	$a x^2 y+ b y z^2 = c t x$&$(0,0,0,1)$& $a x^2 y+ b y z^2 = c t^2$&$(0,1,0,1)$\\\hline
	$a t y + b x^2 y = c x z^2$&$(0,1,1,0)$& $a t^2 + b x^2 y = c x z^2$&$(1,0,1,1)$\\\hline
	$a x^2 y + b x z^2 = c y^2$&$(1,3,2)$& $a t x + b x^2 y = c y^2 z$&$(0,0,1,0)$\\\hline
	$a x^2 y + b x z = cy^2 z$&$(1,1,2)$& $a t^2 + b x^2 y = c y^2 z$&$(0,0,1,0)$\\\hline
	$a x^2 y + b x^2 = cy^2 z$&$(0,0,1)$& $a x^2 y + b x y^2 = c z^2$&$(1,1,2)$\\\hline
	$a t y + b x^2 y = c x^2 z$&$(0,0,1,0)$& $a x^2 y + b x^2 z = c y z$&$(0,1,1)$\\\hline
	$a t^2 + b x^2 y = c x^2 z$&$(0,0,1,0)$& $a x^2 y + b x^2 z = c y^2$&$(0,1,1)$\\\hline
	$a x^3 + b x y z = c t y$&$(0,0,0,1)$& $a x^3 + b x y z = c y^2$&$(1,2,0)$\\\hline
	$a x^3 + b x y = c y z^2$&$(1,2,1,)$& $a x^3 + b t z = c y^2 z$&$(1,1,2,1)$\\\hline
	$a x^3 + b t y = c y^2 z$&$(0,0,1,0)$& $a x^3 + b y z = c y^2 z$&$(1,1,2)$\\\hline
	$a x^3 + b x y = c y^2 z$&$(0,0,1)$& $a x^3 + b y^2 z = c z^2$&$(3,2,5)$\\\hline
	$a x^3 + b y^2 = c y^2 z$&$(0,0,1)$& $a x^3 + b x y^2 = c y z$&$(0,0,1)$\\\hline
	$a x^3 + b x y^2 = c z^2$&$(1,1,2)$& $a x^3 + b x^2 y = c y z$&$(0,0,1)$\\\hline
	$a x^3 + b x^2 y = c z^2$&$(1,1,2)$& $a x^3 + by^3 = c x z$&$(0,0,1)$\\\hline
	$a x^3 + by^3 = cz^2$&$(1,1,2)$& $a x^3 + b y^3 = c x y z$&$(0,0,1)$ \\\hline
	$a x^2 y + b x y z = c t^2 z$&$(0,1,0,1)$& $a x^2 y + b x y z = c t z^2$&$(0,0,0,1)$\\\hline
	$a x^2 y + b y z^2 = c t x z$&$(0,0,0,1)$& $a t^2 y + b x^2 y = c x z^2$&$(0,1,1,0)$\\\hline
	$a x^2 y + b x z^2 = c t y^2$&$(0,0,0,1)$& $a x^2 y + b t x z = c y^2 z$&$(0,1,0,1)$\\\hline
	$a x^2 y + b x^2 z = c t y z$&$(0,0,0,1)$& $a t^2 y + b x^2 y = c x^2 z$&$(0,0,1,0)$\\\hline
	$a x^2 y + b x^2 z = c t y^2$&$(0,0,0,1)$& $a x^2 z+ b y^2 z = c x^2 y$&$(1,1,0)$\\\hline
	$x^3 + b x y z = c t^2 y$&$(1,2,0,1)$& $a x^3 + b x y z = c t y^2$&$(0,0,0,1)$\\\hline
	$a x^3 + b t y z = c y^2 z$&$(1,2,0,1)$& $a x^3 + b x y z = c y^2 z$&$(1,2,0)$\\\hline
	$a t^2 z+ b y^2 z = c x^3$&$(1,1,0,1)$& $a x^3 + b t^2 y = c y^2 z$&$(0,0,1,0)$\\\hline
	$a x^3 + b y^2 z = c t z^2$&$(0,0,0,1)$& $a x^3 + b t y^2 = c y^2 z$&$(0,0,1,0)$\\\hline
	$a x^3 + b x y^2 = c y^2 z$&$(0,0,1)$& $a x^2 y+ b y z^2 = c x^3$&$(1,0,1)$\\\hline
	$a x^3 + b x^2 y = c y^2 z$&$(0,0,1)$& $a x^3 + b z^3 = c x^2 y$&$(0,1,0)$\\\hline
\end{tabular}
	\caption{Families of cubic three-monomial equations solvable by Proposition \ref{prop:3monsuffcond}. \label{tab:3monsolfam}}
\end{center} 
\end{table}

After excluding some trivial families, e.g. when all monomials share a variable or equations in at most two variables, we identified $96$ families of three-monomial cubic equations, and for $88$ of them the condition of Proposition \ref{prop:3monsuffcond} holds, which implies that all these equations are automatically solvable. These families are listed in Table \ref{tab:3monsolfam}. The remaining $8$ families are listed in Table \ref{tab:notauto}. The first three families in Table \ref{tab:notauto} can be reduced by Theorem \ref{th:3monred} to quadratic two-variable equations of the form $Au^2+Bv^2+C=0$, which can then be solved by standard methods \cite[Section 10.5]{MR4368213}. The last $5$ families are homogeneous cubic three-variable equations that can be interpreted as elliptic curves in projective coordinates. Every integer solution $(x_0,y_0,z_0)$ to such equation with $\text{gcd}(x_0,y_0,z_0)=1$ produces an infinite family of solutions of the form $(x_0u, y_0u, z_0u)$, $u \in {\mathbb Z}$, which can be denoted as $(x_0 : y_0 : z_0)$ and interpreted as a (projective) rational point on the corresponding elliptic curve. To describe all such points, we need to compute the rank and the list of generators of its Mordell-Weil group. For the last problem, no general algorithm is known, but there are methods that work for a vast majority of specific curves with not-too-large coefficients \cite{MR1628193}.   

\begin{table}
\footnotesize
\begin{center}
\begin{tabular}{ | c | c |}
\hline
	\textbf{Equation } &  \textbf{Reduced by Theorem \ref{th:3monred} to}\\
	\hline\hline

	$a x+b x^2 y+c y z^2=0$&$Au^2+Bv^2+C=0$\\\hline
	
	$a x^2 y+b x z+c y z^2=0$&$Au^2+Bv^2+C=0$\\\hline
	
	$a x^2+b x^2 y+c y z^2=0$&$Au^2+Bv^2+C=0$\\\hline
	
	$a x^3+b y^3+c z^3=0$&$Au^3+Bv^3+Cw^3=0$\\\hline
	
	$a x^3+b y^2 z+c y z^2=0$&$Au^3+Bv^3+Cw^3=0$\\\hline
	
	$a x^3+b x y^2+c z^3=0$&$Au^6+Bv^3+Cw^2=0$\\\hline
	
	$a x^3+b x y^2+c y z^2=0$&$Au^4+Bv^4+Cw^2=0$\\\hline
		
	$a x^2 y+b y^2 z+c x z^2=0$&$Au^3+Bv^3+Cw^3=0$\\\hline
	\end{tabular}
		\caption{Families of cubic three-monomial equations that are not solvable by Proposition \ref{prop:3monsuffcond}. \label{tab:notauto}}
		\end{center}
	\end{table}

For quartic equations, we identified $60$ families that are not solvable by Proposition \ref{prop:3monsuffcond}. Table \ref{tab:quartic} lists these families, together with equations with  independent monomials to which they can be reduced to by Theorem \ref{th:3monred}. $31$ families are reduced to equations in one variable, two-variable equations of the form \eqref{eq:cympaxnpb}, or three-variable quadratic equations, which are easily solvable. $23$ families reduce to finding rational points on elliptic or super-elliptic curves, which is a difficult problem in general, but it can be solved by existing methods for many individual equations, especially with small coefficients.  Finally, $6$ families reduce to equations of the form 
$$
Au^n v^m+Bw^k+C=0,
$$ 
for integers $n,m,k\geq 2$ such that $\text{gcd}(n, m)=1$, most of which are difficult even for small coefficients.   
For example, a wide open conjecture of Schinzel and Tijdeman \cite{MR422150} predicts that for any polynomial $P(w)$ with integer coefficients with at least three simple zeros, there may exist at most finitely many integers $w$ such that $P(w)$ is a powerful number, that is, a number representable as $u^3v^2$ for integers $u,v$. If $P(w)$ consists of two monomials, this leads to some difficult three-monomial equations in $3$ variables. As a simple example, $P(w)=w^3+1$ leads to equation
\begin{equation}\label{eq:x3y2mz3m1}
u^3v^2=w^3+1
\end{equation}
that seems to be difficult. Examples of three-monomial quartic equations equivalent to \eqref{eq:x3y2mz3m1} are 
$
x^3y + yz^2 + z = 0
$
and 
$
x+x^2y^2 + z^3 = 0.
$

\begin{table}
\footnotesize
\begin{center}
\begin{tabular}{ |c|c||c|c|}
	\hline
	\textbf{Equation } &  \textbf{Reduced to}&\textbf{Equation } &  \textbf{Reduced to}\\
	\hline\hline
	
	$a x^3 y+b z^2+c x y z^2=0$&$Au^2+B=0$&$a x^3 y+b x y z^2+c z^3=0$&$Au^2+B=0$\\\hline
	$a y^2+b x^2 y z+c z^2=0$&$Au^2+B=0$&$a x y^2+b x^2 y z+c z^2=0$&$Au^3+B=0$\\\hline
	
	$a x+b x^2 y^2+c z^2=0$&$Au^2+Bv^2+C=0$&$a x^2 y^2+b x z+c z^2=0$&$Au^2+Bv^2+C=0$\\\hline
	$a x^2+b x^2 y^2+c z^2=0$&$Au^2+Bv^2+C=0$&$a y+b x^2 y^2+c x^2 z^2=0$&$Au^2+Bv^2+C=0$\\\hline	
	$a y^2+b x^2 y^2+c x^2 z^2=0$&$Au^2+Bv^2+C=0$&$a x^2 y+b x^2 y^2+c z^2=0$&$Au^2+Bv^2+C=0$\\\hline	
	$a x^4+b y^2+c y^2 z^2=0$&$Au^2+Bv^2+C=0$&$a x^4+b y^2 z+c y^2 z^2=0$&$Au^2+Bv^2+C=0$\\\hline
	$a x^4+b x y+c y^2 z^2=0$&$Au^2+Bv^2+C=0$&$a x^4+b x^2 y+c y^2 z^2=0$&$Au^2+Bv^2+C=0$\\\hline
	$a x^2 y^2+b x z+c t^2 z^2=0$&$Au^2+Bv^2+C=0$&$a x^3 y+b x^2 z+c y z^3=0$&$Au^3+Bv^3+C=0$\\\hline
	$a x^2 y^2+b y z+c x z^2=0$&$Au^3+Bv^3+C=0$&$a x^3 y+b z+c y^2 z^2=0$&$Au^3+Bw^3+C=0$\\\hline
	$a x^3 y+b x z+c y z^3=0$&$Au^3+Bv^3+C=0$&$a x^2+b x^3 y+c y z^3=0$&$Au^3+Bv^3+C=0$\\\hline
	$a x^3+b x^3 y+c y z^3=0$&$Au^3+Bv^3+C=0$&$a x^2 y^2+b y z+c x^2 z^2=0$&$Au^4+Bv^4+C=0$\\\hline
	$a x^2+b x^2 y^2+c y z^2=0$&$Au^4+Bv^2+C=0$&$a x^4+b y+c y^2 z^2=0$&$Au^4+Bv^2+C=0$\\\hline

	$a x^2 y^2+b x^2 z^2+c y^2 z^2=0$&$Au^2+Bv^2+Cw^2=0$&$a x^4+b y^2+c z^2=0$&$Au^2+Bv^2+Cw^2=0$\\\hline
	$a x^4+b x^2 y^2+c z^2=0$&$Au^2+Bv^2+Cw^2=0$&$a x^4+b t^2 y^2+c y^2 z^2=0$&$Au^2+Bv^2+Cw^2=0$ \\\hline
	$a t^2+b x^2 y^2+c x^2 z^2=0$&$Au^2+Bv^2+Cw^2=0$&$a t^2 y^2+b x^2 y^2+c x^2 z^2=0$&$Au^2+Bv^2+Cw^2=0$\\\hline
	$a x^4+b x^2 y^2+c y^2 z^2=0$&$Au^2+Bv^2+Cw^2=0$&$a x^3 y+b t^3 z+c y^2 z^2=0$&$Au^3+Bv^3+Cw^3=0$\\\hline
		$a x^3 y+b y^2+c z^3=0$&$Au^3+Bv^3+Cw^3=0$&$a x^3 y+b y^2 z+c z^2=0$&$Au^3+Bv^3+Cw^3=0$\\\hline
		$a x^3 y+b x^3 z+c y^2 z^2=0$&$Au^3+Bv^3+Cw^3=0$&$a x^2 y^2+b t^2 y z+c t x z^2=0$&$Au^3+Bv^3+Cw^3=0$\\\hline
	$a x^3 y+b x y^3+c z^2=0$&$Au^4+Bv^4+Cw^2=0$&$a x^4+b y^3 z+c y^2 z^2=0$&$Au^4+Bv^4+Cw^2=0$\\\hline
$a x^4+b y^4+c z^2=0$&$Au^4+Bv^4+Cw^2=0$&$a x^4+b y^2 z+c z^2=0$&$Au^4+Bv^4+Cw^2=0$\\\hline
	$a x^3 y+b x^2 z^2+c y^2 z^2=0$&$Au^4+Bv^4+Cw^2=0$&$a x^4+b x^2 y^2+c z^4=0$&$Au^4+Bv^4+Cw^2=0$\\\hline
		$a x^2 y^2+b t^2 y z+c x^2 z^2=0$&$Au^4+Bv^4+Cw^2=0$&$a x^4+b x y^3+c z^2=0$&$Au^6+Bv^3+Cw^2=0$\\\hline
			$a x^4+b x y^3+c y^2 z^2=0$&$Au^6+Bv^3+Cw^2=0$&$a x^4+b x^2 y^2+c y z^3=0$&$Au^6+Bv^6+Cw^3=0$\\\hline
		$a x^4+b y^4+c z^4=0$&$Au^4+Bv^4+Cw^4=0$&$a x^4+b x y^2 z+c y z^3=0$&$Au^5+Bv^5+Cw^5=0$\\\hline
	$a x^3 y+b y^3 z+c x^2 z^2=0$&$Au^5+Bv^5+Cw^5=0$&	$a x^3 y+b y^3 z+c x z^3=0$&$Au^7+Bv^7+Cw^7=0$\\\hline
    $a x^4+b y^4+c x y z^2=0$&$Au^8+Bv^8+Cw^2=0$&$a x^4+b y^3 z+c y z^3=0$&$Au^8+Bv^8+Cw^4=0$\\\hline	
	$a x^4+b x y^3+c y z^3=0$&$Au^9+Bv^9+Cw^3=0$&$a x^4+b x y^3+c z^4=0$&$Au^{12}+Bv^4+Cw^3=0$\\\hline
	$a x^2 y^2+b z+c x z^2=0$&$Au^3v^2+Bw^2+C=0$&$a y+b x^2 y^2+c x z^2=0$&$Au^3v^2+Bw^2+C=0$\\\hline
		$a x+b x^2 y^2+c z^3=0$&$Au^3v^2+Bw^3+C=0$&$a x^3 y+b z+c y z^2=0$&$Au^3v^2+Bw^3+C=0$\\\hline
	
	$a x+b x^3 y+c y z^2=0$&$Au^4v^3+Bw^2+C=0$&	$a x+b x^3 y+c y^2 z^2=0$&$Au^5v^4+Bw^2+C=0$\\\hline
\end{tabular}
	\caption{Families of quartic three-monomial equations that are not solvable by Proposition \ref{prop:3monsuffcond}.\label{tab:quartic}}
	\end{center}
	\end{table}

\begin{table}
	\footnotesize
	\begin{center}
		\begin{tabular}{ | c | c |c|c|c|c|}
			\hline
			\textbf{vars \textbackslash  degree} & \textbf{10}&\textbf{100}&\textbf{1000}&\textbf{10000}&\textbf{100000} \\\hline \hline	
			\textbf{3}&0.319&0.196&0.217&0.205&0.202 \\\hline
			\textbf{4}&0.553&0.473&0.469&0.502&0.466 \\\hline
			\textbf{5}&0.73&0.676&0.666&0.65&0.676 \\\hline
			\textbf{6}&0.854&0.796&0.824&0.802&0.823 \\\hline
			\textbf{7}&0.922&0.901&0.884&0.882&0.872 \\\hline
			\textbf{8}&0.955&0.952&0.937&0.934&0.935 \\\hline
			\textbf{9}&0.974&0.967&0.975&0.96&0.959 \\\hline
			\textbf{10}&0.987&0.984&0.979&0.978&0.979 \\\hline
		\end{tabular}
		\caption{\label{tab:random_equations} Empirical estimates for the proportion of families of three-monomial equations solvable by Proposition \ref{prop:3monsuffcond}.}
	\end{center} 
\end{table}

\subsubsection{Random equations}

What proportion of general three-monomial equations are solvable by Proposition \ref{prop:3monsuffcond}? To answer this question empirically, we study a simple model of a random three-monomial equation: fix the number of variables $n$ and integer $d>0$, and select all degrees $\alpha_i$, $\beta_i$ and $\gamma_i$ in \eqref{eq:gen3mon2} to be independent uniformly random integers on the interval $[0,d]$. We do not need to select coefficients $a,b,c$, because the condition of Proposition \ref{prop:3monsuffcond} does not depend on them. For each $3\leq n \leq 10$ and for $d=10^m$, $m=1,\dots,5$, we empirically estimated the proportion of equations satisfying the condition of Proposition \ref{prop:3monsuffcond}, see Table \ref{tab:random_equations}. As we can see, for a fixed number of variables $n$, the proportion of solvable equations decreases with $d$ but not fast, and seems to be approaching a limit: the data for $d=10^4$ and $d=10^5$ are similar. On the other hand, for any fixed $d$, this proportion quickly increases with $n$. For $d=10^5$, only about $20\%$ of $3$-variable equations satisfy the condition of Proposition \ref{prop:3monsuffcond}, but this proportion increases to almost $98\%$ for $10$-variable equations. The data suggests that if $n\to \infty$, then $100\%$ of the three-monomial equations are solvable by Proposition \ref{prop:3monsuffcond}.

\section{Conclusions}\label{sec:concl}

This paper develops the general method for solving all three-monomial two-variable Diophantine equations. The result may seem quite surprising, given that this family includes some equations like \eqref{eq:x4paxypy3} that were thought to be difficult. We are confident that this result is ``the best possible one could hope for'' without a major breakthrough, in the sense that it seems very difficult to resolve (i) all four-monomial equations in two variables or (ii) all three-monomial equations in three variables. Family (i) contains some well-known open equations, see \cite{grechuk2021diophantine, grechuk2022smallest}, and its algorithmic resolution looks no easier than the resolution of all two-variable Diophantine equations. Family (ii) contains, for example, equations of the form \eqref{eq:genFermat}, which are known as generalized Fermat equations and are the subject of current deep research \cite{bennett2016generalized}. 

Our algorithm for solving three-monomial two-variable Diophantine equations reduces any equation in this family, which is not covered by Theorem \ref{th:Runge}, to a finite number of equations of the form \eqref{eq:cympaxnpb} that can be solved by Theorem \ref{th:ellBaker1}. This algorithm is in general inefficient for two reasons. Firstly, the number of the resulting equations \eqref{eq:cympaxnpb} may grow rapidly with the coefficients of the original equation. Secondly, the known algorithms for solving individual equations of the form \eqref{eq:cympaxnpb} are quite slow in the worst case. However, the method is practical for solving individual three-monomial two-variable equations with not-too-large exponents and coefficients, see Tables \ref{tab:ntsx4paxypy3} and \ref{tab:xnpxkylpym} for some examples.

The main idea of the proof is the easy but extremely useful Lemma \ref{le:main}, that allows us to conclude that the exponents with which every prime $p$ enters the prime factorization of variables must satisfy one of three linear equations. Then these equations can be solved using the well-known theory of linear equations in non-negative integer variables. 

We have also applied the same idea to general three-monomial equations, in any number of variables. This allowed us to reduce any such equation to a finite number of equations with independent monomials. In many examples, the resulting equations are easy or well-known, hence the described reduction completely solves the original equations. An important example of a family of three-monomial equations that can be completely solved by the presented method are equations covered by Proposition \ref{prop:3monsuffcond}. Empirical results suggest that the number of three-monomial equations in this family approaches $100\%$ of all three-monomial equations in $n$ variables of any fixed degree if $n$ goes to infinity. Hence, our results are the most useful in the two ``opposite'' cases: $n=2$ and $n$ large. In this sense, the most difficult case is $n=3$ variables, where empirical data suggest that only about $20\%$ of the three-monomial equations are covered by Proposition \ref{prop:3monsuffcond}. The simplest-looking examples of difficult three-monomial equations in $3$ variables are the six families of quartic equations listed at the bottom of Table \ref{tab:quartic}, as well as quintic equation \eqref{eq:x3y2mz3m1} with independent monomials. 

\section*{Acknowledgements}

The authors are grateful to the referee for very detailed comments and suggestions, which helped to improve the quality of the paper. 



  \bibliographystyle{elsarticle-num} 
  \bibliography{references}


%
%
%
\end{document}